\documentclass[12pt,english,smallextended]{article}
\usepackage[T1]{fontenc}
\usepackage[latin9]{inputenc}
\usepackage[a4paper]{geometry}
\geometry{verbose,tmargin=3cm,bmargin=3cm,lmargin=2.5cm,rmargin=2.5cm}
\pagestyle{empty}
\setlength{\parindent}{18pt}
\usepackage{color}
\usepackage{float}
\usepackage{mathrsfs}
\usepackage{algorithm2e}
\usepackage{amsmath}
\usepackage{amsthm}
\usepackage{amssymb}
\usepackage{graphicx}
\usepackage{setspace}

\makeatletter

\usepackage{xfrac}

\newcommand{\lyxaddress}[1]{
	\par {\raggedright #1
	\vspace{1.4em}
	\noindent\par}
}
\theoremstyle{plain}
\newtheorem{thm}{\protect\theoremname}
\theoremstyle{definition}
\newtheorem{defn}[thm]{\protect\definitionname}
\theoremstyle{plain}
\newtheorem{lem}[thm]{\protect\lemmaname}
\theoremstyle{remark}
\newtheorem{rem}[thm]{\protect\remarkname}
\newenvironment{lyxlist}[1]
	{\begin{list}{}
		{\settowidth{\labelwidth}{#1}
		 \setlength{\leftmargin}{\labelwidth}
		 \addtolength{\leftmargin}{\labelsep}
		 }}
	{\end{list}}
\theoremstyle{plain}
\newtheorem{fact}[thm]{\protect\factname}
\theoremstyle{definition}
\newtheorem{example}[thm]{\protect\examplename}

\date{}
\usepackage{mathtools}
\usepackage{graphicx}
\usepackage[]{xcolor}
\usepackage{cite}
\usepackage{amsmath, amssymb}
\usepackage[margin=1cm,%
            font=small,%
            format=hang,%
            labelsep=period,%
            labelfont=bf]{caption}

\definecolor{blue3}{RGB}{0,0,255}
\definecolor{skyblue3}{RGB}{0,102,255}
\definecolor{green4}{RGB}{0,153,0}
\definecolor{green5}{RGB}{0,102,0}
\definecolor{red5}{RGB}{153,0,0}

\newcommand{\nlessdot}{\mathrel{\lessdot\!\!\!\!\!\!\not\;\;}}
\newcommand{\ah}{\raisebox{-.29\height}{\includegraphics[scale=0.15]{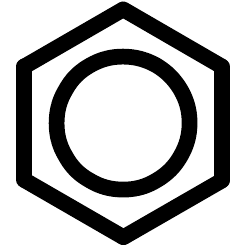}}}

\newcommand{\kh}{\raisebox{-.29\height}{\includegraphics[scale=0.15]{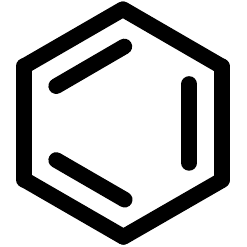}}}
\newcommand{\soc}{\textsc{dib}}
\newcommand{\Soc}{\textup{D}\textsc{ib}}

\SetKwBlock{Either}{either}{}
\SetKwBlock{Or}{or}{end}

\SetCommentSty{mycommfont}

\makeatother

\usepackage{babel}
\providecommand{\definitionname}{Definition}
\providecommand{\examplename}{Example}
\providecommand{\factname}{Fact}
\providecommand{\lemmaname}{Lemma}
\providecommand{\remarkname}{Remark}
\providecommand{\theoremname}{Theorem}

\begin{document}
\begin{doublespace}

\title{\noindent \textbf{\vspace*{0.6cm}
}\\
\textbf{\huge{}ZZ}\textbf{ Polynomials}\\
\vspace*{0.3cm}
\textbf{of Regular }$\boldsymbol{m}$\textbf{-tier Benzenoid Strips}\\
\vspace*{0.3cm}
\textbf{as Extended Strict Order Polynomials}\\
\vspace*{0.3cm}
\textbf{of Associated Posets}\\
\vspace*{0.3cm}
\textbf{Part 1. Proof of Equivalence}}
\end{doublespace}

\author{\noindent \textbf{Johanna Langner}\textit{$^{1}$}\textbf{ and Henryk
A. Witek}\textit{$^{1,2}$}}
\maketitle

\lyxaddress{\noindent \vspace{-2cm}
}

\lyxaddress{\begin{center}
\textit{$^{1}$Department of Applied Chemistry and Institute of Molecular
Science,}\linebreak{}
 \textit{National Yang Ming Chiao Tung University, Hsinchu, Taiwan}\linebreak{}
\textit{$^{2}$Center for Emergent Functional Matter Science, National
Yang Ming Chiao Tung University, Hsinchu, Taiwan}\\
\vspace{0.2cm}
\texttt{\textbf{\textit{\emph{e-mail: johanna.langner@arcor.de, hwitek@mail.nctu.edu.tw}}}}\vspace{-1cm}
\par\end{center}}

\begin{center}
(Received March 08, 2021)\vspace*{0.4cm}
\par\end{center}
\begin{abstract}
\noindent In Part 1 of the current series of papers, we demonstrate
the equivalence between the Zhang-Zhang polynomial $\text{ZZ}(\boldsymbol{S},x)$
of a Kekuléan regular $m$-tier strip $\boldsymbol{S}$ of length
$n$ and the extended strict order polynomial $\text{E}_{\mathcal{S}}^{\circ}(n,x+1)$
of a certain partially ordered set (poset) $\mathcal{S}$ associated
with $\boldsymbol{S}$. The discovered equivalence is a consequence
of the one-to-one correspondence between the set $\left\{ K\right\} $
of Kekulé structures of $\boldsymbol{S}$ and the set $\left\{ \mu:\mathcal{S}\supset\mathcal{A}\rightarrow\left[\,n\,\right]\right\} $
of strictly order-preserving maps from the induced subposets of $\mathcal{S}$
to the interval $\left[\thinspace n\thinspace\right]$. As a result,
the problems of determining the Zhang-Zhang polynomial of $\boldsymbol{S}$
and of generating the complete set of Clar covers of $\boldsymbol{S}$
reduce to the problem of constructing the set $\mathcal{L}(\mathcal{S})$
of linear extensions of the corresponding poset $\mathcal{S}$ and
studying their basic properties. In particular, the Zhang-Zhang polynomial
of $\boldsymbol{S}$ can be written in a compact form as
\[
\text{ZZ}(\boldsymbol{S},x)=\sum_{k=0}^{\left|\mathcal{S}\right|}\sum_{w\in\mathcal{L}(\mathcal{S})}\binom{\left|\mathcal{S}\right|-\text{fix}_{\mathcal{S}}(w)}{\,\,k\,\,\hspace{1pt}-\text{fix}_{\mathcal{S}}(w)}\binom{n+\text{des}(w)}{k}\left(1+x\right)^{k},
\]
where $\text{des}(w)$ and $\text{fix}_{\mathcal{S}}(w)$ denote the
number of descents and the number of fixed labels, respectively, in
the linear extension $w\in\mathcal{L}(\mathcal{S})$. A practical
guide and a four-step, completely automatable algorithm for computing
$\text{E}_{\mathcal{S}}^{\circ}(n,x+1)$ of an arbitrary strip $\boldsymbol{S}$,
followed by a complete account of ZZ polynomials for all regular $m$-tier
benzenoid strips $\boldsymbol{S}$ with $m=1\text{--}6$ and arbitrary
$n$ computed using the discovered equivalence between $\text{ZZ}(\boldsymbol{S},x)$
and $\text{E}_{\mathcal{S}}^{\circ}(n,x+1)$, are presented in Parts
2 and 3, respectively, of the current series of papers {[}J. Langner,
H.~A. Witek, MATCH Commun. Math. Comput. Chem. (2021) (submitted),
\emph{ibid. }(submitted){]}. 

\noindent We would like to stress that the pursued by us approach
is unprecedented in the existing literature on chemical graph theory
and therefore it seems to deserve particular attention of the community,
despite of its quite difficult exposition and connection to advanced
concepts in order theory.
\end{abstract}
\maketitle \thispagestyle{empty} \baselineskip=0.30in

\section{General introduction}

The theory of Clar covers of benzenoids dates back to the early seminal
work of Clar \cite{clar1972thearomatic}, who suggested that the most
chemically stable resonance structures of benzenoids are those with
the maximal number of aromatic sextets. This maximal number of aromatic
sextets that can be accommodated in a given benzenoid $\boldsymbol{B}$\textemdash referred
to in the modern literature as the \emph{Clar number} of $\boldsymbol{B}$
and denoted as $Cl$\textemdash constitutes an important topological
invariant of $\boldsymbol{B}$. A considerable effort has been invested
in the determination of $Cl$ for various classes of benzenoids or
generalized benzenoids \cite{aihara2014constrained,bavsic2017onthe,berlic2015equivalence,chan2015alineartime,chen2010zhangzhang,cruz2012convexhexagonal,klavvzar2002clarnumber,langner2018tilings4,pletervsek2016equivalence,salem2009theclar,tratnik2016resonance,vesel2014fastcomputation,ahmadi_computing_2016,abeledo_unimodularity_2007,ashrafi_relations_2009,ashrafi_clar_2010,balaban_using_2011,berczi-kovacs_complexity_2018,carr_packing_2014,chapman_pairwise_2018,hartung_clar_2014,salem_clar_2004,zhou_relations_2008,zhou_clar_2015}.
In general, a \emph{Clar structure} realizing the maximal number $Cl$
of aromatic sextets is not unique; the number of Clar structures is
denoted by $c_{Cl}$ and constitutes yet another important topological
invariant of $\boldsymbol{B}$. A related, well-established and thoroughly-studied
concept is the Kekulé count $K\left\{ \boldsymbol{B}\right\} $ denoting
how many resonance structures of $\boldsymbol{B}$ can be constructed
using only double bonds and no aromatic sextets \cite{kekule1866untersuchungen,cyvin1988kekulestructures}.
These two numbers, $c_{0}\equiv K\left\{ \boldsymbol{B}\right\} $
and $c_{Cl}$, can be considered as the beginning and the end-point
of a sequence $c_{0},c_{1},\ldots,c_{Cl}$ denoting the cardinalities
of the sets of Clar covers of different order, with $c_{k}$ corresponding
to the number of generalized resonance structures of $\boldsymbol{B}$
constructed using exactly $k$ aromatic sextets. The generating function
$\text{ZZ}(\boldsymbol{B},x)$ for this sequence was introduced to
chemical graph theory by Zhang and Zhang as the \emph{Clar covering
polynomial}, but in the modern literature it is more often referred
to as the \emph{Zhang-Zhang polynomial} or the \emph{ZZ polynomial}.
Zhang and Zhang showed \cite{zhang1996theclar,zhang1996theclar2,zhang1997theclar,zhang2000theclar,zhang_advances_2011}
that $\text{ZZ}(\boldsymbol{B},x)$ has a number of inviting recurrence
properties, which make its determination much easier than finding
any single of the topological invariants of $\boldsymbol{B}$. These
results stimulated Gutman, Furtula, and Balaban \cite{gutman2006algorithm}
and later Chou and Witek \cite{chou2012analgorithm} to design an
algorithm capable of fast and robust computation of $\text{ZZ}(\boldsymbol{B},x)$
using the concept of recursive decomposition. The resulting computer
program (ZZCalculator) \cite{chou2012analgorithm,chou2012zhangzhang}
for determination of Zhang-Zhang polynomials of arbitrary benzenoid
structures has later been augmented with a graphical interface (ZZDecomposer)
\cite{chou2014zzdecomposer,ZZDecomposerDownload1,ZZDecomposerDownload2}
allowing one for generation of benzenoid graphs, computation of $\text{ZZ}(\boldsymbol{B},x)$,
and analysis of the recursive decomposition pathways. ZZDecomposer
has been used in many applications \cite{chou2014determination,chou2014closedtextendashform,chou2015twoexamples,chou2016closedform,He2020determinantal,He2020hexacorners,He2020JohnSachs,He2020parachains,He2020ribbons,langner2017zigzagConnectivityGraph2,langner2018multiplezigzag1,page2013quantum,witek2015zhangzhang,witek2017zhangzhang,witek2020flakes1-8,witek2020fullerenes,witek2020overlappingparas}
to discover and formally prove closed-form formulas of ZZ polynomials
for various classes of elementary and composite benzenoids. At present,
the most important unsolved problems in the theory of ZZ polynomials
are the determination of $\text{ZZ}(\boldsymbol{B},x)$ for oblate
rectangles $Or\left(m,n\right)$ and hexagonal graphene flakes $O\left(k,m,n\right)$.

Completely new vistas in the Clar theory have been recently opened
by the development of the interface theory of benzenoids \cite{langner2019IFTTheorems5,langner2019BasicApplications6,langner2018algorithm3,witek2020overlappingparas,He2020ribbons}.
It has been demonstrated that the description of resonance structures
of a benzenoid $\boldsymbol{B}$ can be reduced to studying the covering
characters of its interfaces. The number of covered edges in each
interface and the relative distribution of the covered edges between
the consecutive interfaces of $\boldsymbol{B}$ is regulated by the
basic tenets of the interface theory (Theorems\,11, 16, and 21 of
\cite{langner2019IFTTheorems5}), allowing one to express uniquely
each Clar cover of $\boldsymbol{B}$ as a sequence of covered interface
bonds of $\boldsymbol{B}$. The generation of the full set of Clar
covers can then be conveniently performed by considering all possible
distributions of covered interface bonds in $\boldsymbol{B}$ that
satisfy the interface theory requirements. In the current work, we
communicate a very important connection discovered by us recently
in this context. Namely, we show that the distribution of double interface
bonds in Kekulé structures of regular benzenoid strips can be very
naturally expressed using the formalism of partially ordered sets
(posets). The existing language and the machinery of the poset theory
allows us to articulate many concepts of the interface theory in a
particularly natural and compact form. Before jumping into technicalities,
we find it appropriate to outline our main findings here. For definitions
of basic terms in the poset theory see Section~\ref{sec:Glossary}
or Stanley's textbook \cite{stanley_enumerative_1986}.

\section{Outline of the results}

The main results obtained in the current work can be briefly summarized
as follows.
\begin{itemize}
\item Every Kekuléan regular $m$-tier strip $\boldsymbol{S}$ of length
$n$ can be uniquely associated with a certain poset $\mathcal{S}$.
\item Every Clar cover of $\boldsymbol{S}$ can be associated with a unique
linear extension of an induced subposet $Q\subset\mathcal{S}$. The
number of Clar covers associated with a linear extension $v$ of $Q$
is given by $2^{\left|Q\right|}\thinspace\binom{n+\text{des}(v)}{\left|Q\right|}$,
where $\text{des}(v)$ denotes the number of descents in $v$. Each
of these $2^{\left|Q\right|}\thinspace\binom{n+\text{des}(v)}{\left|Q\right|}$
Clar covers differs from each other by various distributions of covering
characters (proper sextet \kh~or aromatic sextet \ah) among available
positions ($\left|Q\right|$ distinct entries selected from the sequence
$1,2,\ldots,n+\text{des}(v)$). 
\item The set $\left\{ K\right\} $ of Kekulé structures of $\boldsymbol{S}$
stands in a one-to-one relationship to the set $\left\{ \mu:\mathcal{S}\supset\mathcal{A}\rightarrow\left[\,n\,\right]\right\} $
of strictly order-preserving maps from induced subposets of $\mathcal{S}$
to the interval $\left[\thinspace n\thinspace\right]$. This correspondence
is established by a complementary pair of difficult and technical
Lemmata~\ref{thm:Deconstruct_CC} and~\ref{thm:Construct_CC}.
\item The ZZ polynomial $\text{ZZ}(\boldsymbol{S},x)$ of $\boldsymbol{S}$
is identical to the extended strict order polynomial $\text{E}_{\mathcal{S}}^{\circ}(n,1+x)$
of $\mathcal{S}$ enumerating the strictly order-preserving maps from
subposets of $\mathcal{S}$ to the interval $\{1,\ldots,n\}$:
\begin{equation}
\text{ZZ}(\boldsymbol{S},x)\equiv\text{E}_{\mathcal{S}}^{\circ}(n,1+x).\label{eq:equiv}
\end{equation}
The equivalence between both polynomials is demonstrated by Theorem~\ref{thm:PolyEqui},
which constitutes the main result of our paper.
\end{itemize}

\section{Preliminaries}

\subsection{Poset theory\label{sec:Glossary}}

The poset terminology used here follows closely Stanley's book \cite{stanley_enumerative_1986}.
A \emph{partially ordered set} $P$, or \emph{poset} for short, is
a set together with a binary relation $<_{P}$. In this manuscript,
we are concerned with finite posets $P$ with $p$ elements and with
strict partial orders, meaning that the relation $<_{P}$ is irreflexive,
transitive and antisymmetric. We say that the element $t\in P$ covers
the element $s\in P$ (denoted as $s\lessdot_{P}t$) if $s<_{P}t$
and there is no element $u\in P$ such that $s<_{P}u<_{P}t$. The
relation $<_{P}$ of a finite poset $P$ is entirely determined by
its cover relation, which allows us to represent $P$ graphically
in the form of a \emph{Hasse diagram}: The vertices of the Hasse diagram
are the elements of $P$, and every cover relation $s\lessdot_{P}t$
is represented by an edge that is drawn upwards from $s$ to $t$.
An \emph{induced subposet} $Q\subset P$ is a subset of $P$ together
with the order $<_{Q}$ inherited from $P$ which is defined for any
$s,t\in Q$ by $s<_{P}t\iff s<_{Q}t$. The usual symbol $<$ denotes
the relation ,,larger than'' in $\mathbb{N}$. The symbol $[\,n\,]$
stands for the set $\left\{ 1,2,\ldots,n\right\} $, and $[\,n,m\,]$
stands for the set $\left\{ n,n+1,\ldots,m\right\} $. The symbol
$\boldsymbol{n}$ represents the chain $1<2<3<...<n$. We say that
a map $\phi:P\rightarrow\mathbb{\mathbb{\mathbb{N}}}$ is \emph{order-preserving}
if it satisfies $s<_{P}t\Rightarrow\phi(s)\leq\phi(t)$, and \emph{strictly
order-preserving} if it satisfies $s<_{P}t\Rightarrow\phi(s)<\phi(t)$.
A \emph{natural labeling} of a poset $P$ is an order-preserving bijection
$\omega:P\rightarrow[\,p\,]$. A \emph{linear extension} of $P$ is
an order-preserving bijection $\sigma:P\rightarrow\boldsymbol{p}$,
which is often represented as a permutation $\omega\circ\sigma^{-1}$
expressed by the sequence $w=w_{1}w_{2}\ldots w_{p}$ with $w_{i}=\omega(\sigma{}^{-1}(i))$.
The set of all such linear extensions $w$ is denoted by $\mathcal{L}\left(P\right)$
and is usually referred to as the Jordan-Hölder set of $P$. If two
subsequent labels $w_{i}$ and $w_{i+1}$ in $w$ stand in the relation
$w_{i}>w_{i+1}$, then the index $i$ is called a \emph{descent} of
$w$. The total number of descents of $w$ is denoted by $\text{des}(w)$.

The strict order polynomial $\Omega_{P}^{\circ}(n)$ of a poset $P$
\cite{stanley_chromatic-like_1970,stanley_ordered_1972,stanley_enumerative_1986}
enumerates the strictly order-preserving maps $\phi:P\rightarrow[\,n\,]$
and can be expressed as
\begin{equation}
\Omega_{P}^{\circ}(n)=\sum_{w\in\mathcal{L}(P)}\binom{n+\text{des}(w)}{p}.\label{eq:StrictOrderPoly}
\end{equation}
The \emph{extended strict order polynomial $\text{\emph{E}}_{P}^{\circ}(n,z)$}
of a poset $P$ is formally defined \cite{langner2020sheep8} as
\begin{equation}
\text{E}_{P}^{\circ}(n,z)=\sum_{Q\subset P}\Omega_{Q}^{\circ}(n)z^{\left|Q\right|},\label{eq:GenOrderPoly}
\end{equation}
where the sum runs over the induced subposets $Q$ of $P$. The theorem
demonstrated recently by us (Theorem~2 of \cite{langner2020sheep8})
allows us to rewrite Eq.~(\ref{eq:GenOrderPoly}) in a more explicit
form
\begin{equation}
\text{E}_{P}^{\circ}(n,z)=\sum_{k=0}^{p}\sum_{w\in\mathcal{L}(P)}\binom{p-\text{fix}_{P}(w)}{k-\text{fix}_{P}(w)}\binom{n+\text{des}(w)}{k}z^{k},\label{eq:Znz}
\end{equation}
where $\text{fix}_{P}(w)$ denotes the number of fixed labels in the
linear extension $w=w_{1}w_{2}\ldots w_{p}$ of $P$. A label $w_{i}$
is \emph{fixed} in $w$ if at least one of the following two conditions
is satisfied: $(1)$~$i-1$ or $i$ is a descent, or $(2)$~the
set $L(w_{i})=\{l\,\vert\,l<i,w_{l}>w_{i}\}$ of positions of preceding
larger labels and the set $J(w_{i})=\{j\,\vert\,\omega^{-1}(w_{j})<_{P}\omega^{-1}(w_{i})\}$
of positions of necessarily preceding labels satisfy the following
two conditions: $L(w_{i})\ne\varnothing$ and $\text{max}(L(w_{i}))>\text{max}(J(w_{i}))$.

\subsection{Chemical graph theory}

There are many non-equivalent definitions of a benzenoid in the literature
\cite{gutman1989introduction,zhang1996theclar,Si2000,John1990}. In
the current paper, we aim at studying a specific family of benzenoids
(regular $m$-tier benzenoid strips), so for our purpose it is sufficient
to define a benzenoid $\boldsymbol{B}$ as a finite subgraph of the
infinite hexagonal lattice $L$, obtained by choosing a cycle $C_{\boldsymbol{B}}$
in $L$ and selecting all the vertices and edges of $L$ that lie
on or inside $C_{\boldsymbol{B}}$ \cite{zhang1996theclar}. We assume
that the lattice $L$ is oriented such that some of its edges are
vertical. We say that a benzenoid $\boldsymbol{B}$ is a \emph{regular
1-tier strip of length $n$} if it consists of $n$ adjacent hexagons
located in the same horizontal row of $L$. A \emph{regular $m$-tier
strip} $\boldsymbol{S}$ is obtained by merging $m$ consecutive regular
1-tier strips located in adjacent rows of $L$, in such a way that
the following two conditions are satisfied: $\left(i\right)$~Two
adjacent strips differ at each end by $\pm\sfrac{1}{2}$ hexagon unit.
$\left(ii\right)$~The top and the bottom regular 1-tier strips are
both of the same length $n$ \cite{witek2015zhangzhang,witek2017zhangzhang}.
Examples of regular $m$-tier strips are given in Figs.~\ref{fig:Regstrips},
\ref{fig2KofO332-1}, and \ref{fig2KofO332-1-1}.
\begin{figure}[H]
\begin{centering}
\includegraphics[scale=0.35]{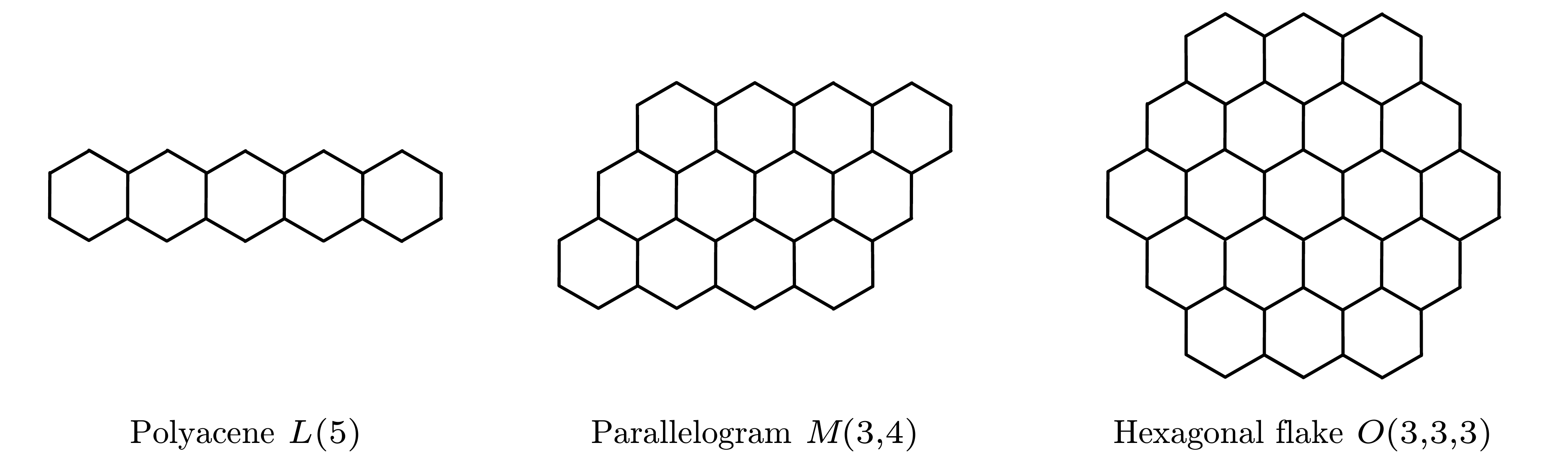}
\par\end{centering}
\caption{\label{fig:Regstrips}Examples of regular $m$-tier strips of length
$n$, with $m=1,3,5$ and $n=5,4,3$, respectively.}
\end{figure}

A \emph{Clar cover} is a spanning subgraph of $B$ such that every
one of its connected components is isomorphic to $K_{2}$ or $C_{6}$
\cite{clar1972thearomatic,pletervsek2016equivalence}. ($K_{2}$ denotes
a complete graph on 2 vertices and $C_{6}$ denotes a cycle of girth
6.) A spanning subgraph of $\boldsymbol{B}$ consisting entirely of
$K_{2}$ components is usually referred to as a \emph{Kekulé structure}
\cite{kekule1866untersuchungen,gutman1989introduction}, a perfect
matching, or a 1-factor of $\boldsymbol{B}$. Both concepts, Kekulé
structures and Clar covers, played very important roles in the early
development stages of theoretical chemistry, when it seemed plausible
that accurate predictions of energetic stability and reactivity of
benzenoid hydrocarbons could be directly linked to the theory of chemical
resonance \cite{Pauling1960resonance,clar1972thearomatic} based on
topological invariants derived by analyzing Kekulé structures and
Clar covers of a given benzenoid. Unfortunately, early quantum chemical
methods based on these concepts, such as the Hückel method or the
extended Hückel method, could not withstand the competition from much
more accurate and sophisticated computational models of quantum chemistry,
such as density functional theory and \emph{ab initio} methods, and
have been gradually sinking into oblivion. However, we see potential
capabilities lying dormant in those graph-theoretical concepts, which
might in the near future lead to a renaissance of Kekulé structure\textendash{}
or Clar cover\textendash based novel techniques of quantum chemistry,
using for example the set of Kekulé structures or Clar covers as a
Hilbert space basis for valence bond configuration interaction (CI)
or perturbation theory calculations. Such methods are yet to be developed,
but their latent advantages rely on the efficient enumeration of Clar
covers or Kekulé structures and generation of Hamiltonian matrix elements
in their basis using concepts similar to those used in the graphical
unitary group approach (GUGA) to CI \cite{Paldus1981,Shavitt1981}.
We hope that the research reported here will contribute to such a
development.

\begin{figure}[H]
\begin{centering}
\includegraphics[scale=0.3]{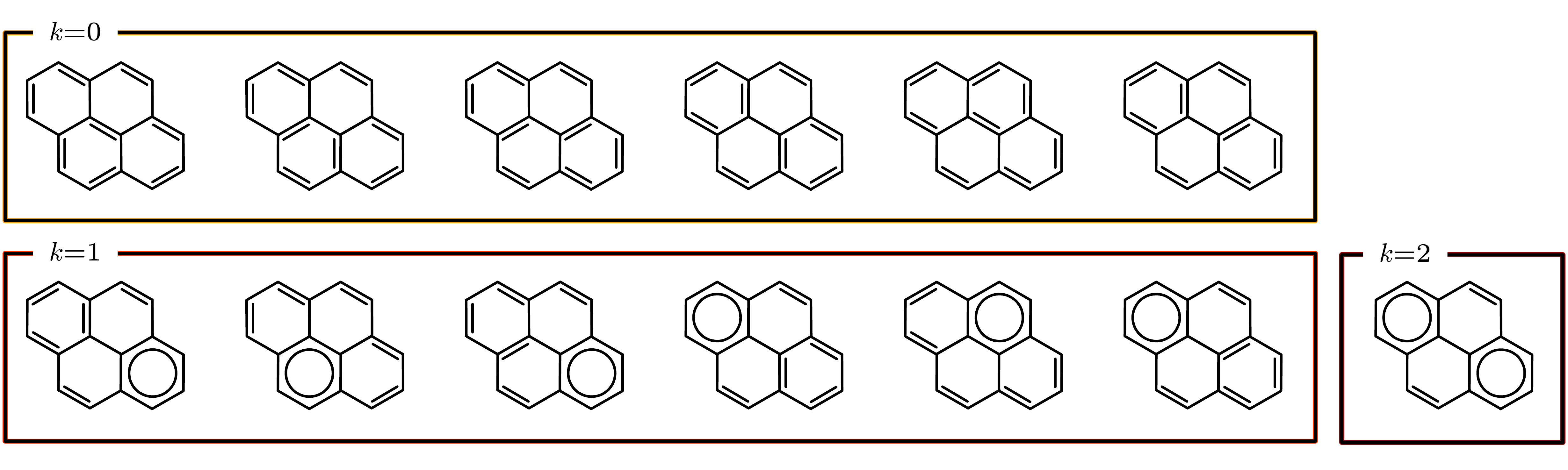}
\par\end{centering}
\caption{The parallelogram $M(2,2)$ has $6$ possible Clar covers with no
aromatic rings, $6$ Clar covers with one aromatic ring, and one Clar
cover with two aromatic rings. Therefore, $\text{ZZ}(M(2,2))=x^{2}+6x+6$.\label{fig:M22_ZZ}}
\end{figure}
A Clar cover $C$ of a benzenoid $\boldsymbol{B}$ with $N$ vertices
(atoms) consists of a certain number (say $k$) of hexagon ($C_{6}$)
components and $\frac{N-6k}{2}$ edge ($K_{2}$) components. The number
$k$ is referred to as the \emph{order} of the Clar cover $C$. As
an example, we present in Fig.~\ref{fig:M22_ZZ} all 13 possible
Clar covers of the parallelogram $M(2,2)$ (i.e., the only regular
2-tier benzenoid strip of length 2). This set consists of six Clar
covers of order 0 (coinciding with the Kekulé structures of $M(2,2)$),
six Clar covers of order 1 and a single Clar cover of order 2. The
maximal order $Cl$ of the Clar covers of $\boldsymbol{B}$, naturally
bounded from above by $\frac{N}{6}$, is referred to as the \emph{Clar
number} of $\boldsymbol{B}$ \cite{clar1972thearomatic,gutman1985clarformulas}.
For $M(2,2)$ from Fig.~\ref{fig:M22_ZZ}, we have $Cl=2$. Let us
denote by $c_{k}$ the number of Clar covers of $\boldsymbol{B}$
of order $k$. A generating function for the sequence $c_{0},\ldots,c_{Cl}$
\begin{eqnarray}
\text{ZZ}(\boldsymbol{B},x) & = & \sum_{k=0}^{Cl}c_{k}x^{k}\label{eq:ZZdef0}
\end{eqnarray}
was introduced by Zhang and Zhang as the \emph{Clar covering polynomial}
\cite{zhang1996theclar,zhang1996theclar2,zhang1997theclar,zhang2000theclar},\textit{\emph{
but is more commonly referred to in the modern literature as the}}\textit{
Zhang-Zhang polynomial} of $\boldsymbol{B}$ or simply the \textit{ZZ
polynomial} of $\boldsymbol{B}$. Zhang and Zhang demonstrated in
Theorem~2 of \cite{zhang2000theclar} (see also Theorem~1 of \cite{zhang1997theclar})
that the ZZ polynomial of a benzenoid $\boldsymbol{B}$ can also be
expressed as
\begin{equation}
\text{ZZ}(\boldsymbol{B},x)=\sum_{k=0}^{Cl}a(\boldsymbol{B},k)(x+1)^{k},\label{eq:ZZz}
\end{equation}
where $a(\boldsymbol{B},k)$ denotes the number of Kekulé structures
of $\boldsymbol{B}$ that have exactly $k$ proper sextets, where
a \emph{proper sextet} is characterized by three edges of a Kekulé
structure arranged within a hexagon as shown in the left panel of
Fig.~\ref{fig:propersextet}. The right panel of Fig.~\ref{fig:propersextet}
gives a few examples of other coverings of a single hexagon that do
not adhere to the definition of a proper sextet. Owing to Eq.~(\ref{eq:ZZz}),
one can compute the ZZ polynomial of a benzenoid $\boldsymbol{B}$
from the analysis of the set of Kekulé structures of $\boldsymbol{B}$,
which is substantially smaller than the set of Clar covers of $\boldsymbol{B}$.
We heavily rely on this concept in the following.
\begin{figure}[H]
\begin{centering}
\includegraphics[scale=0.3]{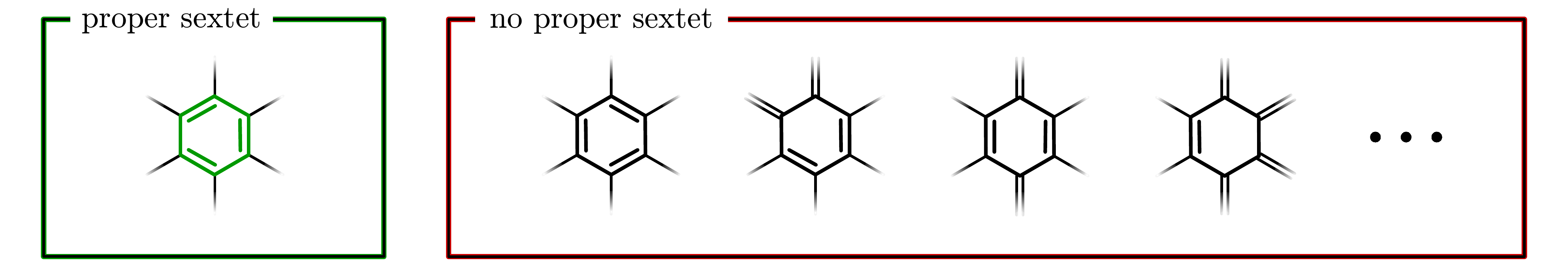}
\par\end{centering}
\caption{Left panel: Proper sextet. Right panel: Various coverings of a single
hexagon that are not proper sextets.\label{fig:propersextet}}
\end{figure}

\subsection{Adaptation of the interface theory of benzenoids to the analysis
of Kekulé structures of regular $m$-tier strips\label{sec:IT}}

Recently, we have developed a new theoretical framework for constructing,
analyzing, and enumerating Kekulé structures and Clar covers of benzenoids,
which is based on the concepts of fragments and interfaces. The resulting
conceptual methodology was given the name of \emph{interface theory
of benzenoids} \cite{langner2019IFTTheorems5,langner2019BasicApplications6}.
In this section, the main concepts and results of the interface theory
of benzenoids are presented in a simplified form specialized for studying
Kekulé structures of regular benzenoid strips. By virtue of Eq.~(\ref{eq:ZZz}),
the presented formalism is sufficient to enumerate Clar covers of
regular benzenoid strips and to compute their ZZ polynomials. In most
cases, the presented theory is obviously consistent with the previous
developments, but in situations when doubts might arise we give formal
proofs of the presented facts.

We introduce the following linguistic equivalences to be used throughout
this paper, which unify the terminologies typically used in the context
of graph theoretical analysis of benzenoids by the mathematical and
chemical communities: For a benzenoid $\boldsymbol{B}$ with a Kekulé
structure $K$, a vertex in $\boldsymbol{B}$ $\equiv$ an atom in
$\boldsymbol{B}$, an edge in $\boldsymbol{B}$ $\equiv$ a bond in
$\boldsymbol{B}$, an edge covered by some $K_{2}$ in $K$ $\equiv$
a double bond, and an edge in $\boldsymbol{B}$ that is not in $K$
$\equiv$ a single bond.

\begin{figure}[H]
\begin{centering}
\includegraphics[scale=0.4]{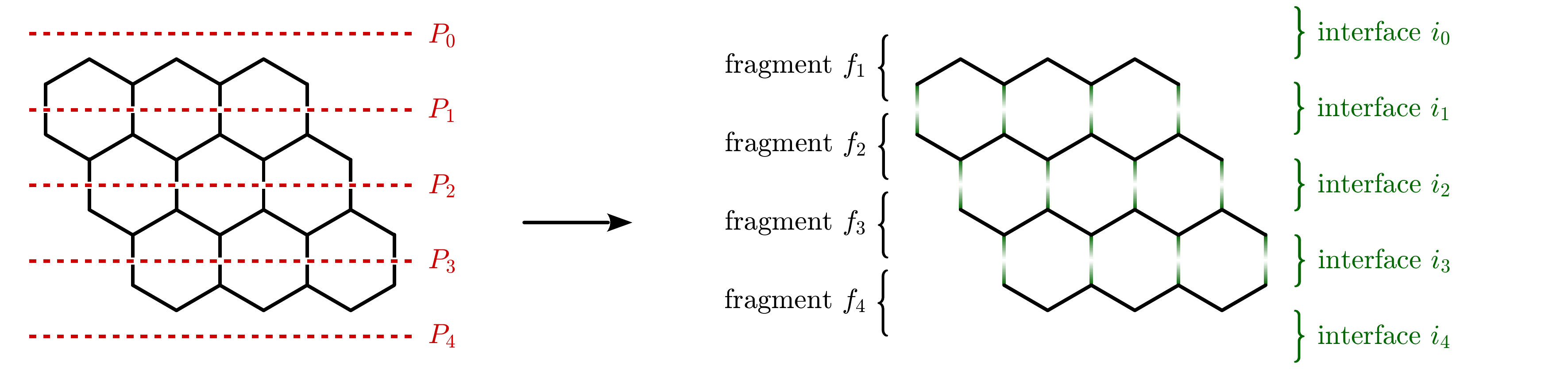}
\par\end{centering}
\caption{Dividing a regular $m$-tier strip of length $n$ using $m+2$ horizontal
partition lines $P_{0},\ldots,P_{m+1}$ defines $m+1$ fragments $f_{1},\ldots,f_{m+1}$
and $m+2$ interfaces $i_{0},\ldots,i_{m+1}$. These concepts are
illustrated here on the example of the parallelogram $M(m,n)$ with
$m=3$ and $n=3$.\label{fig:fragmentdef-1}}
\end{figure}
Consider a regular $m$-tier strip $\boldsymbol{S}$ with $m+2$ horizontal
partition lines $P_{0},\ldots,P_{m+1}$ along each row, as shown on
the left side of Fig.~\ref{fig:fragmentdef-1}. The vertical bonds
of $\boldsymbol{S}$ crossed by the partition lines are called \emph{interface
bonds}, and the slanted bonds between the partition lines are called
\emph{spine bonds}. The set of bonds and atoms which are (at least
partially) between the lines $P_{k-1}$ and $P_{k}$ is called the
\textit{fragment} $f_{k}$ of $\boldsymbol{B}$. The set of interface
bonds crossed by the line $P_{k}$ is called the \textit{interface
}$i_{k}$ of $\boldsymbol{B}$\textit{.}\textit{\emph{ The interfaces
}}$i_{k-1}$ and $i_{k}$ above and below the fragment $f_{k}$ are
called the \emph{upper and lower interfaces of $f_{k}$}, respectively.
Each fragment is assigned a shape, as depicted in Fig.~\ref{fig:Fragment-shapes.-1},
in the following way. 
\begin{figure}[H]
\begin{centering}
\includegraphics[scale=0.4]{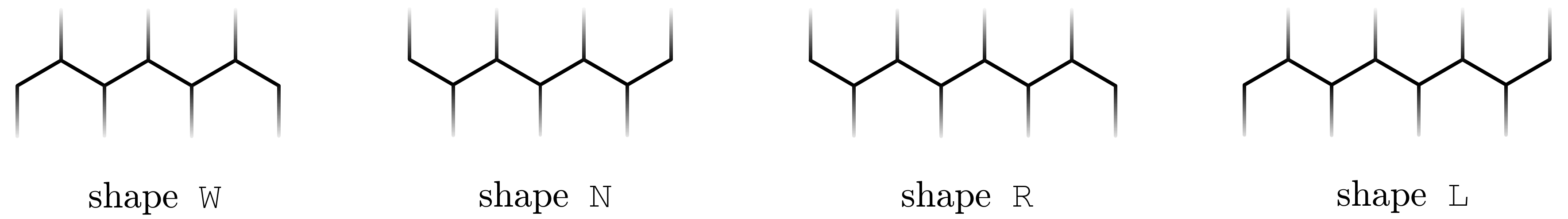}
\par\end{centering}
\caption{Fragments can have four possible shapes: $\mathtt{W}$ (wide), $\mathtt{N}$
(narrow), $\mathtt{R}$ (right), or $\mathtt{L}$ (left).\label{fig:Fragment-shapes.-1}}
\end{figure}
 Let $f_{k}$ be a fragment of a regular strip. Denote the leftmost
interface bond of $f_{k}$ by $b_{\text{first}}$ and the rightmost
interface bond of $f_{k}$ by $b_{\text{last}}$. The \textit{shape}
of $f_{k}$ is \label{def:shape-1}\vspace{-2mm}
\[
\begin{array}{lcll}
\mathtt{W} & \text{(wide)} & \text{if }b_{\text{first}}\in i_{k} & \text{and }b_{\text{last}}\in i_{k},\\
\mathtt{N} & \text{(narrow)} & \text{if }b_{\text{first}}\in i_{k-1} & \text{and }b_{\text{last}}\in i_{k-1},\\
\mathtt{R} & \text{(right)} & \text{if }b_{\text{first}}\in i_{k-1} & \text{and }b_{\text{last}}\in i_{k},\\
\mathtt{L} & \text{(left)} & \text{if }b_{\text{first}}\in i_{k} & \text{and }b_{\text{last}}\in i_{k-1},
\end{array}
\]
where $i_{k-1}$ is the upper interface of $f_{k}$ and $i_{k}$ is
the lower interface of $f_{k}$. For example, in the parallelogram
shown in Fig.~\ref{fig:fragmentdef-1}, the fragment $f_{1}$ has
shape $\mathtt{W}$, $f_{2}$ and $f_{3}$ have shape $\mathtt{R}$,
and $f_{4}$ has shape $\mathtt{N}$. It is clear that the sequence
$[\mathtt{W},\mathtt{R},\mathtt{R},\mathtt{N}]$ and the length $n=3$
fully specify the geometry of this particular regular strip.

\textit{\emph{The bonds within each interface are numbered from the
left to the right: The $j^{\text{th}}$ bond in the interface $i_{k}$
is denoted by $e_{k,j}$. Note that this differs from the bond notation
in the previous papers \cite{langner2019IFTTheorems5,langner2019BasicApplications6};
it is however the most convenient naming system for the following
derivations. The cardinality of a set $J$ of edges will be denoted
by $\left|J\right|$.}}
\begin{defn}
Consider a regular strip $\boldsymbol{S}$ and one of its Kekulé structures
$K$. The \emph{set} $K_{I}$\emph{ of double interface bonds} in
$K$ is defined as
\[
K_{I}=E\left(K\right)\cap\bigcup_{k=1}^{m}i_{k}.
\]
\end{defn}

\noindent Any Kekulé structure $K$ is uniquely determined by its
set $K_{I}$:
\begin{lem}[Lemma~8 of \cite{langner2019IFTTheorems5}]
\emph{\negthickspace{}\negthickspace{}\negthickspace{}}\label{lem:Fulldet}
~Let $\boldsymbol{S}$ be a regular strip, and consider two Kekulé
structures $K$ and $K'$ of $\boldsymbol{S}$. If their sets of double
interface bonds coincide, i.e., $K_{I}=K_{I}'$, then $K=K'$.
\end{lem}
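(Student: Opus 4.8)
The plan is to argue by symmetric difference. Since $K$ and $K'$ are both perfect matchings of $\boldsymbol{S}$, every atom is incident to exactly one bond of $K$ and one bond of $K'$; hence in the subgraph $D = K \triangle K'$ every atom has degree $0$ or $2$, so $D$ is a disjoint union of cycles, each alternating between bonds of $K$ and bonds of $K'$. To prove the lemma it therefore suffices to show that $D$ contains no cycle at all, for then $D = \varnothing$ and $K = K'$.

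First I would locate the bonds available to $D$. A bond lies in $D$ exactly when it belongs to precisely one of $K$, $K'$. If such a bond were an interior interface bond, i.e. an element of $\bigcup_{k=1}^{m} i_k$, it would belong to exactly one of $K_I$ and $K_I'$, contradicting the hypothesis $K_I = K_I'$. Consequently $D$ avoids every bond of $i_1,\ldots,i_m$, so each bond of $D$ is either a spine bond or a bond of one of the two boundary interfaces $i_0$, $i_{m+1}$. Thus every cycle of $D$ lives inside the subgraph $G^{*}$ obtained from $\boldsymbol{S}$ by deleting all interior interface bonds.

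The heart of the argument is then to show that $G^{*}$ is a forest, since a union of cycles contained in an acyclic graph is necessarily empty. I would establish this from the layered structure of the strip: each spine bond joins two atoms lying in the same fragment, so the spine bonds of distinct fragments are vertex-disjoint, and within a single fragment $f_k$ the spine bonds link the atoms on its upper interface $i_{k-1}$ and its lower interface $i_k$ in a single alternating zigzag, i.e. they form a simple path. The only adjacencies between different fragments run through interface bonds, and only the interior interfaces $i_1,\ldots,i_m$ actually join two fragments; deleting them therefore separates $G^{*}$ into the individual fragment paths, to which the bonds of $i_0$ and $i_{m+1}$ (if present at all) attach only as pendant edges reaching boundary atoms and hence cannot close a cycle. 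So each connected component of $G^{*}$ is a path with possible pendant edges, $G^{*}$ is a forest, $D$ has no cycle, and $K = K'$.

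I expect the main obstacle to be the structural claim that the spine bonds of each fragment form a single acyclic path, equivalently that every cycle of $\boldsymbol{S}$ must cross an interior interface bond; the matching-theoretic part is routine by comparison. This acyclicity is geometrically evident from the definition of a regular $m$-tier strip together with the placement of the partition lines $P_0,\ldots,P_{m+1}$, since each hexagon carries both of its vertical bonds in the same interior interface and is cut by the corresponding partition line into an upper and a lower half belonging to consecutive fragments; making this precise is where the care is needed.
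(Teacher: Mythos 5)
Your proof is correct, but it takes a genuinely different route from the paper. The paper does not argue from first principles at all: it observes that $K_{I}$ fixes the covering character (double or single) of every interface bond and then invokes Lemma~8 of \cite{langner2019IFTTheorems5}, which asserts that the interface bond covering characters determine the entire Clar cover. Your argument replaces that citation with a self-contained matching-theoretic proof: the symmetric difference $K\triangle K'$ of two perfect matchings is a disjoint union of alternating cycles, the hypothesis $K_{I}=K_{I}'$ excludes every interior interface bond from it, and the spine-bond subgraph $G^{*}$ is acyclic, so the symmetric difference is empty. The trade-off is generality versus elementarity: the cited lemma applies to Clar covers (including aromatic sextets) of arbitrary benzenoids, and the symmetric-difference device would not survive in that setting since Clar covers are not matchings, whereas your argument is specific to Kekul\'e structures of regular strips---which is exactly what this lemma concerns---and keeps the paper self-contained. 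The one step you rightly flag as needing care, the acyclicity of $G^{*}$, can be made rigorous cleanly: in the hexagonal lattice every atom is incident to exactly one vertical bond and two slanted bonds, one pointing left and one pointing right, so in $G^{*}$ every atom has degree at most two and any walk advances strictly monotonically in the horizontal coordinate, which forbids cycles. Also note a small simplification: the boundary interfaces $i_{0}$ and $i_{m+1}$ of a regular strip are empty (their partition lines lie above and below all vertical bonds), so your ``pendant edges'' case is vacuous and every bond of $K\triangle K'$ is in fact a spine bond.
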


\begin{proof}
The set $K_{I}$ specifies the covering character of all interface
bonds: those in $K_{I}$ are double bonds, and the remaining ones
are single bonds. According to Lemma~8 of \cite{langner2019IFTTheorems5},
the interface bond covering characters fully determine the entire
Clar cover (here $K$). Therefore, $K_{I}=K_{I}'$ implies $K=K'$.
\end{proof}
\begin{defn}
Consider a regular strip $\boldsymbol{S}$ and one of its Kekulé structures
$K$. The \textit{order }of an interface $i$ of $\boldsymbol{S}$
is defined as the number of edges of $K$ located in $i$:
\[
\text{ord}(i)=\left|K_{I}\cap i\right|.
\]
\end{defn}

\begin{thm}[First Rule: Interface order criterion (Theorem~11 of \cite{langner2019IFTTheorems5})]
\emph{\label{thm:1st rule-1}}Consider a regular $m$-tier strip
$\boldsymbol{S}$ of length $n$ with a Kekulé structure $K$. Let
$i$ be an interface of $\boldsymbol{S}$. Then, 
\begin{equation}
\text{ord}(i)=\left|i\right|-n.\label{eq:FirstRule}
\end{equation}
\end{thm}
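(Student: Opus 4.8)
The plan is to reformulate the claim in terms of \emph{single} (non-$K$) interface bonds. Since $\text{ord}(i)$ counts the double bonds of $i$ and $|i|$ counts all bonds of $i$, the number of single bonds of $i$ equals $|i|-\text{ord}(i)$, so the identity $\text{ord}(i)=|i|-n$ is equivalent to the statement that in every Kekul\'e structure $K$ each interface carries exactly $n$ single bonds. I would prove this reformulation in two stages: (i) a local argument showing that two interfaces bounding a common fragment carry the same number of single bonds, so that this count is constant over all interfaces of $\boldsymbol{S}$; and (ii) a boundary computation pinning the common value to $n$ using the interface adjacent to the top of the strip.

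For stage (i), fix a fragment $f_k$ with upper interface $i_{k-1}$ and lower interface $i_k$. The two structural facts I would use are that every atom of $f_k$ is an endpoint of exactly one interface bond, and that every spine bond of $f_k$ joins the lower endpoint of an upper interface bond to the upper endpoint of a lower interface bond (a single slanted bond is monotone in height, so it cannot join two atoms lying on the same boundary of $f_k$). In $K$ each atom of $f_k$ is matched either by its own interface bond or by a spine bond; hence an atom whose interface bond is single is necessarily matched by a spine bond, while an atom whose interface bond is double is not. Because each spine bond of $K$ meets exactly one atom on the upper boundary and one on the lower boundary, the spine bonds of $K$ lying in $f_k$ biject the single bonds of $i_{k-1}$ with the single bonds of $i_k$. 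Thus $i_{k-1}$ and $i_k$ contain equally many single bonds, and, chaining along the successive fragments, every interface of $\boldsymbol{S}$ contains the same number of single bonds.

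For stage (ii), I would analyse the region capping the top of $\boldsymbol{S}$. By condition $(ii)$ in the definition of a regular strip the uppermost tier is a $1$-tier strip of length $n$, so the atoms above the topmost interface form a zigzag path alternating between the $n+1$ upper endpoints of that interface and the $n$ apex atoms of the top tier. Each apex atom can be matched only to one of its two neighbouring endpoints, so in any $K$ the $n$ apexes are matched into the $n+1$ endpoints, forcing exactly one endpoint to be matched downward by its (double) interface bond; equivalently, exactly $n$ of the $n+1$ topmost interface bonds are single. Combined with stage (i), this shows that every interface carries exactly $n$ single bonds, which is the claimed identity.

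I expect the main obstacle to be the careful verification of the two structural facts of stage (i) across all four fragment shapes $\mathtt{W},\mathtt{N},\mathtt{R},\mathtt{L}$, together with the correct treatment of the two extremal caps, where the apex atoms are not incident to any interface bond. A more robust alternative that avoids the shape bookkeeping is a colouring argument: $\boldsymbol{S}$ is bipartite, and along any interface the atoms immediately above the partition line all receive one colour while those immediately below receive the other (consecutive endpoints are joined by a two-edge path, hence share a colour). Cutting $\boldsymbol{S}$ along the interface, the only edges of $K$ crossing the cut are its $\text{ord}(i)$ double interface bonds, each joining an upper atom to a lower atom of the opposite colour, while every remaining atom above the cut is matched within the upper part in colour-balanced pairs. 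Hence $\text{ord}(i)$ equals the colour imbalance of the region above $i$, a quantity manifestly independent of $K$; it then remains only to evaluate this topological invariant as $|i|-n$, reducing the whole theorem to a single combinatorial count.
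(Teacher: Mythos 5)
Your proposal is correct, and it proves the theorem by a genuinely more self-contained route than the paper. The paper's proof is essentially a citation plus induction: it imports from Theorem~11 of \cite{langner2019IFTTheorems5} the recursion stating that $\text{ord}(i_{k})$ grows by $1$, drops by $1$, or is unchanged according to whether $\text{shape}(f_{k})$ is $\mathtt{W}$, $\mathtt{N}$, or $\mathtt{R}/\mathtt{L}$, notes that $\left|i_{k}\right|$ satisfies the identical recursion, anchors at $\text{ord}(i_{0})=0$, $\text{shape}(f_{1})=\mathtt{W}$, $\left|i_{1}\right|=n+1$, and concludes by induction that $\left|i_{k}\right|-\text{ord}(i_{k})=n$ for $k=1,\ldots,m$. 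Your argument has the same skeleton---an invariant transported across consecutive interfaces plus an anchor at the top cap---but you establish both ingredients from first principles using only the perfect-matching structure of $K$: the spine bonds of $K$ inside an interior fragment $f_{k}$ biject the single bonds of $i_{k-1}$ with the single bonds of $i_{k}$, and the $n$ apex atoms of the top tier, each forced to be matched by a spine bond, leave exactly one of the $n+1$ bonds of $i_{1}$ double. In effect your stage (i) re-proves the special case of Theorem~11 of \cite{langner2019IFTTheorems5} that the paper merely cites, so your proof is independent of the earlier interface-theory paper, at the cost of the shape-by-shape verification you correctly flag (for the interior fragments $f_{2},\ldots,f_{m}$ of a regular strip, the $\pm\sfrac{1}{2}$ end-offset condition is exactly what guarantees that every atom on the dividing line carries exactly one interface bond of $\boldsymbol{S}$, while the caps $f_{1}$ and $f_{m+1}$ must be treated separately, as you do). Two small remarks: like the paper, you should read the statement as applying to the nonempty interfaces $i_{1},\ldots,i_{m}$ only, since it fails trivially for the empty interfaces $i_{0}$ and $i_{m+1}$; and your closing colouring sketch is not yet a proof---evaluating the colour imbalance of the region above $i$ as $\left|i\right|-n$ is essentially the claim itself---but your main two-stage argument does not depend on it.
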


\begin{proof}
According to Theorem~11 of \cite{langner2019IFTTheorems5} and the
definition of regular strips,\begin{footnotesize}
\[
\text{ord}(i_{k})=\begin{cases}
\text{ord}(i_{k-1})+1 & \text{ if shape}(f_{k})=\mathtt{W},\\
\text{ord}(i_{k-1})-1 & \text{ if shape}(f_{k})=\mathtt{N},\\
\text{ord}(i_{k-1}) & \text{ if shape}(f_{k})=\mathtt{R}\text{ or \ensuremath{\mathtt{L}}},
\end{cases}\quad\text{ and }\quad\left|i_{k}\right|=\begin{cases}
\left|i_{k-1}\right|+1 & \text{ if shape}(f_{k})=\mathtt{W},\\
\left|i_{k-1}\right|-1 & \text{ if shape}(f_{k})=\mathtt{N},\\
\left|i_{k-1}\right| & \text{ if shape}(f_{k})=\mathtt{R}\text{ or \ensuremath{\mathtt{L}}}.
\end{cases}
\]
\end{footnotesize}The empty interface $i_{0}$ naturally has $\text{ord}(i_{0})=0$.
The first fragment $f_{1}$ of a regular strip always has the shape
$\mathtt{W}$, thus the first interface $i_{1}$ has $\text{ord}(i_{1})=\text{ord}(i_{0})+1=1$.
In a regular strip of length $n$, the interface $i_{1}$ has $\left|i_{1}\right|=n+1$
bonds, and therefore $\left|i_{1}\right|-\text{ord}(i_{1})=n$. Owing
to the recursive properties of $\text{ord}(i_{k})$ and $\left|i_{k}\right|$
specified a few lines above, for every fragment shape, we find $\left|i_{k}\right|-\text{ord}(i_{k})=\left|i_{k-1}\right|-\text{ord}(i_{k-1})$.
It follows by induction that $\left|i_{k}\right|-\text{ord}(i_{k})=n$
for all $k=1,2,\ldots,m$.
\end{proof}
\begin{rem}
\label{rem:ordik}We see from Theorem~\ref{thm:1st rule-1} that
$\text{ord}(i)=\left|i\right|-n$, and since $\left|i\right|$ and
$n$ are both independent of $K$, so is $\text{ord}(i)$. This signifies
that the interface orders $\text{ord}(i_{1}),\ldots,\text{ord}(i_{m})$
are identical for all Kekulé structures $K$ of $\boldsymbol{S}$
and constitute yet another topological invariant of $\boldsymbol{S}$.
We will in the following take advantage of this fact, and understand
the order $\text{ord}(i)$ of an interface $i$ of $\boldsymbol{S}$
to be equal to $\left|i\right|-n$, even when no Kekulé structure
is specified.
\end{rem}

\begin{thm}[Second Rule: Double interface bonds alternate (Theorem~16 of \cite{langner2019IFTTheorems5})]
Let $\boldsymbol{S}$ be a regular strip with a Kekulé structure
$K$. Let $f$ be a fragment of $\boldsymbol{S}$. The double interface
bonds of $K$ belonging to $f$ (i.e., edges in $K_{I}\cap f$) are,
from left to right, distributed as follows.\label{thm:2nd rule-1}
\begin{lyxlist}{00.00.0000}
\item [{\emph{$(a)$}}] The first interface bond and the first double interface
bond in $f$ belong to the same interface.
\item [{\emph{$(b)$}}] The last interface bond and the last double interface
bond in $f$ belong to the same interface.
\item [{\emph{$(c)$}}] If there is a double bond in the upper (lower)
interface, then the next double bond can only belong to the lower
(upper) interface.
\end{lyxlist}
\end{thm}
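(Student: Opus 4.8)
The plan is to reduce all three claims to a single structural picture of a fragment together with a parity argument for perfect matchings. First I would make explicit the internal geometry of a fragment $f$: reading its interface bonds from left to right as $b_1,b_2,\ldots,b_q$ (so $b_1=b_{\text{first}}$ and $b_q=b_{\text{last}}$), each $b_j$ has a unique endpoint $a_j$ lying strictly between the two partition lines that bound $f$. I would verify that (i) these \emph{inner atoms} $a_1,\ldots,a_q$ are precisely the atoms of $f$ that are not shared across a partition line; (ii) the inner atoms form a single path $a_1-a_2-\cdots-a_q$ whose edges are spine bonds, each $a_j$ additionally carrying the vertical interface bond $b_j$ as its only further incident edge; and (iii) the interface containing $b_j$ alternates with the parity of $j$, with $b_j$ lying in the same interface as $b_{\text{first}}$ exactly when $j$ is odd. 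Fact (iii) is just the honeycomb observation that successive vertical bonds of a fragment are crossed alternately by its upper and its lower partition line.

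With this picture the matching argument is short. Fix a Kekul\'e structure $K$ and let $D=\{j:b_j\in K_I\}$ be the positions of the double interface bonds. If $j\in D$ then $a_j$ is matched by its own vertical interface bond and is thereby removed from the spine; if $j\notin D$ then $b_j$ is a single bond, so $a_j$ must be matched by one of its spine bonds, necessarily to a neighbour $a_{j-1}$ or $a_{j+1}$, which is then itself outside $D$. Hence the spine bonds of $K$ restrict to a perfect matching of the set $\{a_j:j\notin D\}$, and this set is the disjoint union of the sub-paths obtained by deleting the vertices $\{a_j:j\in D\}$ from the path $a_1-\cdots-a_q$. Since a path admits a perfect matching exactly when it has an even number of vertices, every such sub-path has an even number of vertices. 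Writing $D=\{j_1<\cdots<j_s\}$, this says precisely that $j_1-1$ is even, that $j_{t+1}-j_t-1$ is even for $1\le t<s$, and that $q-j_s$ is even.

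Translating these parities through (iii) finishes the proof. From $j_1-1$ even we get $j_1$ odd, so the first double interface bond $b_{j_1}$ lies in the same interface as $b_{\text{first}}$, which is $(a)$. From $q-j_s$ even we get $j_s\equiv q\pmod{2}$, so $b_{j_s}$ lies in the same interface as $b_q=b_{\text{last}}$, which is $(b)$. From $j_{t+1}-j_t-1$ even we get that $j_t$ and $j_{t+1}$ have opposite parity, so consecutive double interface bonds lie in opposite interfaces, which is $(c)$.

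The main obstacle lies entirely in the first step: establishing rigorously that the inner atoms form a single spine path obeying the alternation (iii) and, in particular, that a fragment contains no atoms besides the $a_j$. This is clean for interior fragments, but requires separate care at the two end fragments $f_1$ and $f_{m+1}$, whose upper (respectively lower) interface is the empty interface $i_0$ (respectively $i_{m+1}$); there the spine carries apex atoms bearing no interface bond, so (i) fails literally. However, $f_1$ always has shape $\mathtt{W}$ with $\text{ord}(i_1)=1$ and, by the analogous argument at the bottom, $f_{m+1}$ has shape $\mathtt{N}$ with $\text{ord}(i_m)=1$ (both by the First Rule), so each boundary fragment contains exactly one double interface bond; claims $(a)$ and $(b)$ are then immediate and claim $(c)$ is vacuous. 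I would therefore prove (i)--(iii) for interior fragments and dispatch the two boundary fragments by this order-one observation.
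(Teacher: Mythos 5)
Your proposal is correct, but it takes a genuinely different route from the paper. The paper's own proof is essentially a reduction by citation: it observes that a regular strip is a benzenoid and a Kekul\'e structure is a Clar cover, so Theorem~16 of \cite{langner2019IFTTheorems5} (in the reformulation given by Theorem~4 of \cite{langner2019BasicApplications6}) applies verbatim, with the only strip-specific check being that the first (last) atom of a fragment is connected to the first (last) interface bond; statements $(a)$--$(c)$ are then inherited directly. You instead give a self-contained, first-principles argument: you model a fragment as a spine path $a_1-\cdots-a_q$ of inner atoms, each carrying exactly one vertical interface bond whose interface alternates with the parity of $j$, and then observe that in a perfect matching the undoubled positions decompose into subpaths that must each be perfectly matched by spine bonds, hence have even length; the three parity conditions ($j_1-1$, $j_{t+1}-j_t-1$, $q-j_s$ all even) translate exactly into $(a)$, $(b)$, $(c)$. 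This is a valid and rather elegant unification of the three claims, and your handling of the two boundary fragments $f_1$ and $f_{m+1}$ (where apex atoms carry no interface bond, so the structural picture fails literally, but $\text{ord}(i_1)=\text{ord}(i_m)=1$ by the First Rule makes all claims trivial or vacuous) is precisely the care that is needed. What the paper's approach buys is brevity and generality\textemdash Theorem~16 of \cite{langner2019IFTTheorems5} holds for arbitrary benzenoids and for Clar covers including aromatic sextets, not just perfect matchings of regular strips\textemdash while your approach buys independence from the earlier interface-theory papers at the cost of having to establish the honeycomb facts (i)--(iii) rigorously, which is exactly where you correctly locate the remaining work.
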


\begin{proof}
Since a regular strip $\boldsymbol{S}$ is a benzenoid and since its
Kekulé structure $K$ is a Clar cover, all assumptions of Theorem~16
of \cite{langner2019IFTTheorems5} are satisfied and we know from
the statements $\left(a\right)$ and $\left(b\right)$ of the reformulation
of Theorem~16 of \cite{langner2019IFTTheorems5} given by Theorem~4
of \cite{langner2019BasicApplications6} that the statements $\left(a\right)$
and $\left(b\right)$ of the current theorem are true, because in
a regular strip $\boldsymbol{S}$ the first (last) atom of $f$ is
connected to the first (last) interface bond of $f$. The statement
$\left(c\right)$ of the current theorem is a direct consequence of
the statement $\left(c'\right)$ of Theorem~16 of \cite{langner2019IFTTheorems5}.
\end{proof}
\begin{thm}[Third Rule: Construction of Clar covers (Theorem~21 of \cite{langner2019IFTTheorems5})]
\label{thm:Construct_CC-1}Consider a regular $m$-tier strip $\boldsymbol{S}$.
Define a set $K_{v}$ of double interface bonds in $\boldsymbol{S}$
satisfying for every $k\in\left[\,m\,\right]$ the following conditions:
\begin{lyxlist}{00.00.0000}
\item [{\emph{$(a)$}}] $\left|K_{v}\cap i_{k}\right|=\text{ord}(i_{k})$ 
\item [{\emph{$(b)$}}] The set $K_{v}\cap f_{k}$ satisfies the statements
$\left(a\right)$\textendash $\left(c\right)$ of Theorem~\ref{thm:2nd rule-1}.
\end{lyxlist}
\noindent Then, there is exactly one Kekulé structure $K$ with $K_{I}=K_{v}$.
\end{thm}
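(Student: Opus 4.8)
The plan is to prove the two assertions hidden in ``exactly one'' separately: uniqueness, which is essentially free, and existence, which carries all of the work. Uniqueness follows at once from Lemma~\ref{lem:Fulldet}: if $K$ and $K'$ were both Kekul\'e structures of $\boldsymbol{S}$ with $K_I=K_v=K_I'$, then their sets of double interface bonds coincide and hence $K=K'$. It therefore remains to exhibit \emph{at least one} Kekul\'e structure $K$ with $K_I=K_v$, after which ``exactly one'' is automatic.

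For existence I would construct $K$ by adjoining to the prescribed vertical bonds $K_v$ a suitable choice of slanted spine bonds, and the first point to establish is that this construction \emph{decouples across fragments}. Every interface bond $e_{k,j}\in i_k$ is a vertical edge straddling the partition line $P_k$, so one of its endpoints lies in $f_k$ (above $P_k$) and the other in $f_{k+1}$ (below $P_k$); in particular no atom is shared by two fragments. Consequently $K_v$ already fixes the covering character of every interface atom, and within each fragment $f_k$ the atoms left uncovered by $K_v$ --- namely the $f_k$-side endpoints of those interface bonds of $f_k$ absent from $K_v$, together with the atoms on the empty boundary interfaces $i_0$ and $i_{m+1}$, which carry no vertical bond at all --- must be matched using only the spine bonds of $f_k$. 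Since the fragments share no atoms and $K_v$ contains no spine bonds, it suffices to match the leftover atoms of each $f_k$ by its spine bonds; the union of these matchings with $K_v$ is then a perfect matching whose intersection with $\bigcup_k i_k$ is exactly $K_v$, i.e., a Kekul\'e structure with $K_I=K_v$.

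The heart of the matter, and the step I expect to be the main obstacle, is the per-fragment matching. I would model $f_k$ as the zigzag path $b_0-t_1-b_1-t_2-b_2-\cdots$ whose edges are precisely the spine bonds of $f_k$ and whose vertices alternate between the valley atoms carrying lower-interface bonds ($\in i_k$) and the peak atoms carrying upper-interface bonds ($\in i_{k-1}$); covering a vertex by its pendant vertical bond corresponds to that interface bond lying in $K_v$. Deleting the $K_v$-covered vertices splits the path into subpaths, and the leftover atoms admit a perfect matching by spine bonds --- then necessarily the unique one, pairing consecutive survivors --- exactly when every maximal run of surviving vertices has even length. The alternation condition $(c)$ of Theorem~\ref{thm:2nd rule-1} is precisely what guarantees this in the interior: indexing the path so that peaks sit at odd positions and valleys at even positions, two consecutive double bonds in \emph{opposite} interfaces delete one odd and one even position and hence leave an even run between them, whereas two consecutive double bonds in the \emph{same} interface would leave an odd run. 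Conditions $(a)$ and $(b)$ of Theorem~\ref{thm:2nd rule-1} play the analogous role for the two end runs, tying the parity of the first and last surviving segments to the shape of $f_k$.

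Finally I would record a counting check that makes the global object a genuine perfect matching rather than merely a locally consistent one: condition $(a)$ of the present theorem, $\left|K_v\cap i_k\right|=\text{ord}(i_k)$, together with the interface-order identity $\text{ord}(i)=\left|i\right|-n$ of Theorem~\ref{thm:1st rule-1}, fixes the number of deleted vertices on each interface and thereby forces the leading and trailing runs to the correct (even) parity, so that no atom of $f_k$ is left unmatched. Assembling the unique per-fragment spine matchings with $K_v$ then yields a Kekul\'e structure $K$ with $K_I=K_v$, and with the uniqueness above this proves the claim. Alternatively, one could bypass the explicit construction by checking that conditions $(a)$ and $(b)$ are exactly the hypotheses of the general construction theorem, Theorem~21 of \cite{langner2019IFTTheorems5}, and invoke it directly; either way the essential content is that the alternation rule renders the spine bonds of each fragment uniquely completable.
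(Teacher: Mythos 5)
Your proof is correct, but it follows a genuinely different route from the paper's. The paper does not construct anything: it proves this theorem purely by reduction, reinterpreting $K_{v}$ as an assignment of covering characters and checking that conditions $(a)$ and $(b)$ are exactly the hypotheses of Theorem~5 of \cite{langner2019BasicApplications6} (the reformulation of Theorem~21 of \cite{langner2019IFTTheorems5}), which then delivers existence and uniqueness in one stroke---essentially the ``alternative'' you mention in your closing sentence. What you do instead is reprove the needed fragment of that machinery from scratch: uniqueness from Lemma~\ref{lem:Fulldet}, and existence by decoupling the strip into atom-disjoint fragments, modelling each fragment's spine bonds as a path, and showing that the alternation conditions $(a)$--$(c)$ of Theorem~\ref{thm:2nd rule-1}, together with the order condition and Theorem~\ref{thm:1st rule-1}, force every run of atoms left uncovered by $K_{v}$ to have even length, hence to admit a (unique) matching by consecutive spine bonds. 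Your version buys self-containedness and makes visible \emph{why} the Second Rule is precisely the right hypothesis---the parity bookkeeping on the zigzag path is the real content hidden inside the cited theorem---at the cost of leaning on structural facts about regular strips (each atom carries at most one vertical bond, interior fragments' atoms all carry exactly one, the exceptional peak/valley atoms of $f_{1}$ and $f_{m+1}$) that you assert rather than prove; the paper's version is shorter and stays consistent with the published interface-theory framework, but is opaque without the companion papers. One small looseness in your sketch: for the boundary fragments $f_{1}$ and $f_{m+1}$, conditions $(a)$ and $(b)$ of Theorem~\ref{thm:2nd rule-1} are vacuous (all interface bonds lie in the single non-empty interface), and what actually forces the end runs even there is condition $(a)$ of the present theorem combined with $\text{ord}(i_{1})=\text{ord}(i_{m})=1$ and the fact that all deletions occur on the same parity class; your final counting paragraph covers this, but it is doing more work than the phrase ``analogous role'' suggests.
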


\begin{proof}
The present theorem is a direct consequence of Theorem~5 of \cite{langner2019BasicApplications6}
applied to Kekulé structures (i.e., Clar covers of order 0) of regular
strips. Reinterpreting the set $K_{v}$ in the language used in \cite{langner2019BasicApplications6}
corresponds to assigning double bond covering character to all the
interface bonds in $K_{v}$, and single bond covering character to
all the interface bonds in $\bigcup_{k=1}^{m}i_{k}\setminus K_{v}$.
Condition $(a)$ of Theorem~5 of \cite{langner2019BasicApplications6}
is vacuously satisfied since none of the interface bonds in $\boldsymbol{S}$
have aromatic covering character. Condition $(a)$ of the present
Theorem implicitly defines interface orders $\text{ord}(i_{k})$ satisfying\textemdash by
the arguments presented in the proof of Theorem~\ref{thm:1st rule-1}
above\textemdash the conditions $(a)$\textendash $(c)$ of Theorem~3
of \cite{langner2019BasicApplications6}, which in turn implies the
validity of condition $(b)$ of Theorem~5 of \cite{langner2019BasicApplications6}.
The proof of Theorem~\ref{thm:2nd rule-1} shows that a set of double
interface bonds $K_{v}\cap f_{k}$ which satisfies condition $(b)$
of the present theorem also satisfies condition $(c)$ of Theorem~5
of \cite{langner2019BasicApplications6}. Therefore, all the conditions
of Theorem~5 of \cite{langner2019BasicApplications6} are satisfied,
meaning that there exists exactly one Clar cover with the double interface
bonds specified by $K_{v}$. Since none of the interface bonds are
aromatic, this unique Clar cover is a Kekulé structure $K$ with $K_{I}=K_{v}$. 
\end{proof}

\section{Derivation of the main results}

We always assume in the following that $\boldsymbol{S}$ is a regular
$m$-tier strip of length $n$ with at least one Kekulé structure.

\subsection{Partially ordered set of \soc s}

Consider an arbitrary Kekulé structure $K$ of the regular strip $\boldsymbol{S}$
and the corresponding set of double interface bonds $K_{I}$. We will
see in the following that the set $K_{I}$ can be naturally extended
to a poset, whose structure is completely determined by the First
and Second Rule of interface theory, and thus identical for every
Kekulé structure $K$.

Let us first analyze the structure of $K_{I}$. The number of bonds
of $K_{I}$ located in any interface $i_{k}$ of $\boldsymbol{S}$,
$\left|K_{I}\cap i_{k}\right|$, is, by Theorem~\ref{thm:1st rule-1},
equal to $\text{ord}(i_{k})=\left|i_{k}\right|-n$. Since $\left|i_{k}\right|-n$
is independent of $K$, the numbers of double bonds in the interfaces
$i_{1},\ldots,i_{m}$ are the same for every choice of $K$. Let us
refer to the $j^{th}$ element of $K_{I}\cap i_{k}$ from the left
as $d_{k,j}^{K}$. Then, we can express the set $K_{I}$ as
\begin{eqnarray}
K_{I} & = & \bigcup_{k=1}^{m}\bigcup_{j=1}^{\text{ord}(i_{k})}\left\{ d_{k,j}^{K}\right\} .\label{eq:KIunion}
\end{eqnarray}
It is clear that the structure of the set $K_{I}$ given by Eq.~(\ref{eq:KIunion})
is identical for every Kekulé structure $K$.

We can introduce now a strict partial order $<_{K}$ on the set $K_{I}$
via an appropriate cover relation $\lessdot_{K}$:
\begin{defn}
\label{def:relK}Consider two double interface bonds $d_{k,j}^{K},d_{k',j'}^{K}\in K_{I}\subset E\left(K\right)$
and denote $\kappa=\text{max}(k,k')$. We say that $d_{k,j}^{K}\text{ and }d_{k',j'}^{K}$
stand in the cover relation $d_{k,j}^{K}\lessdot_{K}d_{k',j'}^{K}$
if and only if
\begin{itemize}
\item $\left|k'-k\right|=1$ and
\item $j'-j=\begin{cases}
0 & \text{when the first interface bond of \ensuremath{f_{\kappa}} belongs to }i_{k},\\
1 & \text{when the first interface bond of \ensuremath{f_{\kappa}} belongs to }i_{k'}.
\end{cases}$
\end{itemize}
\end{defn}

\noindent The first condition, $\left|k'-k\right|=1$, indicates that
$d_{k,j}^{K}$ and $d_{k',j'}^{K}$ belong to the same fragment $f_{\kappa}$
of $\boldsymbol{S}$, and the second condition effectively stipulates
that $d_{k',j'}^{K}$ is the next double interface bond in $f_{\kappa}$
to the right from $d_{k,j}^{K}$:
\begin{thm}[\Soc~Reformulation of the Second Rule]
\label{thm:zigzagsoc}Consider a fragment $f$ and a Kekulé structure
$K$ of $\boldsymbol{S}$. Consider further two double interface bonds
$d_{k,j}^{K},d_{k',j'}^{K}\in K_{I}\cap f$ which stand in the relation
$d_{k,j}^{K}\lessdot_{K}d_{k',j'}^{K}$. Then, $d_{k,j}^{K}$ is located
to the left of $d_{k',j'}^{K}$, and all the interface bonds of $f$
between $d_{k,j}^{K}$ and $d_{k',j'}^{K}$ are single bonds.
\end{thm}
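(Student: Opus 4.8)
The plan is to show that the combinatorial cover relation $\lessdot_K$ of Definition~\ref{def:relK} is exactly the relation ``$d_{k',j'}^{K}$ is the double interface bond immediately following $d_{k,j}^{K}$ among all double interface bonds of $f$, read from left to right.'' Both assertions of the theorem then follow at once: if two double bonds are consecutive in the left-to-right order with $d_{k,j}^{K}$ first, then $d_{k,j}^{K}$ lies to the left of $d_{k',j'}^{K}$, and no double interface bond of $f$ lies strictly between them, so every interface bond of $f$ lying between them must be single.

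First I would record a structural observation used throughout: every bond of the upper interface $i_{\kappa-1}$ or the lower interface $i_\kappa$ bordering the fragment $f=f_\kappa$ belongs to $f$, since a vertical bond crossed by a partition line is partially contained in each of the two fragments that the line separates. Consequently the double interface bonds of $i_{\kappa-1}$ (resp. $i_\kappa$) lying in $f$ are precisely $d_{\kappa-1,1}^{K},d_{\kappa-1,2}^{K},\ldots$ (resp. $d_{\kappa,1}^{K},d_{\kappa,2}^{K},\ldots$) in left-to-right order, so that the global interface index of each such bond coincides with its rank within $f$.

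Next I would invoke the Second Rule (Theorem~\ref{thm:2nd rule-1}). Let $A$ denote the interface of $f$ containing the first interface bond $b_{\text{first}}$ of $f$, and $B$ the other interface; by part~$(a)$ the leftmost double bond of $f$ also lies in $A$, and by part~$(c)$ the double bonds of $f$ strictly alternate between $A$ and $B$. Hence, reading from the left, the double bonds of $f$ are forced into the sequence $d_{A,1}^{K},d_{B,1}^{K},d_{A,2}^{K},d_{B,2}^{K},\ldots$, so that $d_{A,q}^{K}$ occupies position $2q-1$ and $d_{B,q}^{K}$ occupies position $2q$ in the left-to-right order.

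Finally I would translate the index condition of Definition~\ref{def:relK}. Since $|k'-k|=1$, the two bonds occupy the two interfaces $\{i_{\kappa-1},i_\kappa\}$ of $f=f_\kappa$, and $b_{\text{first}}$ lies in exactly one of $i_k,i_{k'}$. If $b_{\text{first}}\in i_k$ then $A=i_k$ and $j'=j$, so $d_{k,j}^{K}=d_{A,j}^{K}$ and $d_{k',j'}^{K}=d_{B,j}^{K}$ occupy the consecutive positions $2j-1$ and $2j$. If instead $b_{\text{first}}\in i_{k'}$ then $A=i_{k'}$, $B=i_k$, and $j'=j+1$, so $d_{k,j}^{K}=d_{B,j}^{K}$ and $d_{k',j'}^{K}=d_{A,j+1}^{K}$ occupy the consecutive positions $2j$ and $2j+1$. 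In either case $d_{k,j}^{K}$ and $d_{k',j'}^{K}$ are adjacent in the left-to-right order with $d_{k,j}^{K}$ first, which is exactly what is needed. I expect the main obstacle to be precisely this last bookkeeping step: one must keep careful track of which interface plays the role of $A$, reconcile the two parities of the position in the left-to-right order against the two cases $j'=j$ and $j'=j+1$ of the cover relation, and justify that the global interface indices $j,j'$ genuinely coincide with the within-fragment ranks.
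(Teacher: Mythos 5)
Your proposal is correct and follows essentially the same route as the paper's own proof: both use parts $(a)$ and $(c)$ of the Second Rule (Theorem~\ref{thm:2nd rule-1}) to establish the alternating left-to-right sequence $d_{A,1}^{K},d_{B,1}^{K},d_{A,2}^{K},d_{B,2}^{K},\ldots$ of double interface bonds in $f$, and then check by the same two-case analysis of Definition~\ref{def:relK} that a pair in the cover relation occupies consecutive positions in this sequence. Your preliminary remark that the global interface index of a double bond coincides with its rank within the fragment is a point the paper leaves implicit, but otherwise the arguments are identical.
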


\begin{proof}
Denote by $i_{l}$ the interface of $f$ which contains the first
interface bond of $f$, and by $i_{r}$ the other interface of $f$.
According to condition $(a)$ of the Second Rule (given in Theorem~\ref{thm:2nd rule-1}),
the first double interface bond $d_{l,1}^{K}$ of $f$ is in $i_{l}$.
According to condition $(c)$ of Theorem~\ref{thm:2nd rule-1}, a
double bond in $i_{l}$ ($i_{r}$) is followed by a double bond in
$i_{r}$ ($i_{l}$). Therefore, the double interface bonds of $f$
are alternating between the interfaces $i_{l}$ and $i_{r}$ and are
given, from left to right, by the sequence $d_{l,1}^{K},d_{r,1}^{K},d_{l,2}^{K},d_{r,2}^{K},d_{l,3}^{K},\ldots$

\noindent For two double interface bonds of the fragment $f$ standing
in the relation $d_{k,j}^{K}\lessdot_{K}d_{k',j'}^{K}$, we need to
consider two possibilities: $\left(i\right)$~$k'=r$ or $\left(ii\right)$~$k'=l$.
In case~$\left(i\right)$, the first condition of Definition~\ref{def:relK}
tells us that $k=l$ and the second condition of Definition~\ref{def:relK}
tells us that $j'=j$. A comparison with the sequence specified at
the beginning of this proof shows that $d_{k,j}^{K}=d_{l,j'}^{K}$
is indeed located to the left of $d_{k',j'}^{K}=d_{r,j'}^{K}$. Similarly,
in case~$\left(ii\right)$, we have $k=r$ and $j'=j+1$. Again,
a comparison with the sequence specified at the beginning of this
proof shows that $d_{k,j}^{K}=d_{r,j'-1}^{K}$ is indeed located to
the left of $d_{k',j'}^{K}=d_{l,j'}^{K}$. Since $d_{l,j'}^{K}$ and
$d_{r,j'}^{K}$ in case~$\left(i\right)$ and $d_{r,j'-1}^{K}$ and
$d_{l,j'}^{K}$ in case~$\left(ii\right)$ are consecutive pairs
of double interface bonds in the above sequence, no double interface
bonds are located between them and all interface bonds (if any) located
between $d_{k,j}^{K}$ and $d_{k',j'}^{K}$ are single bonds.
\end{proof}
\begin{defn}
\label{def:Transitive-closure}The transitive closure of the relation
$\lessdot_{K}$ shall be denoted by $<_{K}$.
\end{defn}

\begin{lem}
The relation $<_{K}$ is a strict partial order.
\end{lem}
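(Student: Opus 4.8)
The plan is to check directly the three defining properties of a strict partial order listed in Section~\ref{sec:Glossary}: irreflexivity, transitivity, and antisymmetry. Transitivity requires no work, since by Definition~\ref{def:Transitive-closure} the relation $<_{K}$ is \emph{defined} to be the transitive closure of $\lessdot_{K}$, and a transitive closure is transitive by construction. The entire content of the lemma therefore reduces to establishing irreflexivity and antisymmetry, and for both of these I would exploit the fact, guaranteed by Theorem~\ref{thm:zigzagsoc}, that each cover step moves strictly to the right within the strip.

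Concretely, I would assign to every double interface bond $d_{k,j}^{K}$ its horizontal coordinate $x(d_{k,j}^{K})$ in the hexagonal lattice $L$, so that ``located to the left of'' translates into a strict inequality between these coordinates. The first condition of Definition~\ref{def:relK}, $\left|k'-k\right|=1$, forces any two bonds related by $\lessdot_{K}$ to lie in adjacent interfaces and hence in a common fragment $f_{\kappa}$; Theorem~\ref{thm:zigzagsoc} then applies verbatim to this single fragment and yields $x(a)<x(b)$ whenever $a\lessdot_{K}b$. Thus $x$ is a \emph{strictly increasing} function along every cover step.

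It remains to propagate this monotonicity through the transitive closure. Unwinding Definition~\ref{def:Transitive-closure}, $a<_{K}b$ holds precisely when there is a finite chain $a=c_{0}\lessdot_{K}c_{1}\lessdot_{K}\cdots\lessdot_{K}c_{\ell}=b$ with $\ell\geq1$; applying the previous step to each link gives $x(a)<x(c_{1})<\cdots<x(b)$, so $a<_{K}b$ implies $x(a)<x(b)$. Irreflexivity follows immediately, since $a<_{K}a$ would force $x(a)<x(a)$, and antisymmetry follows as well, since $a<_{K}b$ together with $b<_{K}a$ would force the contradiction $x(a)<x(b)<x(a)$. The only delicate point I anticipate is making the coordinate $x$ genuinely global: Theorem~\ref{thm:zigzagsoc} is stated one fragment at a time, and although each individual cover step stays inside a single fragment, a chain realizing $a<_{K}b$ may cross several fragment boundaries, so I must verify that the same ambient horizontal coordinate of $L$ orders bonds consistently across all interfaces, rather than relying on a separate per-fragment left-to-right labeling that need not compose.
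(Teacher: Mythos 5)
Your proposal is correct and follows essentially the same route as the paper: transitivity is immediate from Definition~\ref{def:Transitive-closure}, and irreflexivity and antisymmetry are deduced from Theorem~\ref{thm:zigzagsoc}, which guarantees that each cover relation moves strictly to the left-to-right in the strip. The paper states this more tersely (it asserts the left-of property directly for $<_{K}$ without spelling out the chain argument or the global horizontal coordinate), but your explicit propagation through chains and your observation that ``located to the left of'' must be read in the ambient lattice $L$ are exactly the details the paper's one-line argument implicitly relies on.
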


\begin{proof}
We have to show that $<_{K}$ is irreflexive, transitive and antisymmetric.
Transitivity is clear from Def.~\ref{def:Transitive-closure}. It
follows from Theorem~\ref{thm:zigzagsoc} that, whenever $d_{k,j}^{K}<_{K}d_{k',j'}^{K}$,
the double interface bond $d_{k,j}^{K}$ is located to the left of
$d_{k',j'}^{K}$; consequently, $<_{K}$ is antisymmetric and irreflexive.
\end{proof}
\begin{fact}
\label{fact:coverf}Consider a fragment $f$ of $\boldsymbol{S}$
and its Kekulé structure $K$. Denote by $i_{u}$ the upper interface
of $f$ and by $i_{l}$ the lower interface of $f$. It follows from
Def.~\ref{def:relK} that the set $\mathscr{C}_{K}\left(f\right)$
of cover relations $\lessdot_{K}$ between the double interface bonds
of $f$ (i.e., between the elements of $K_{I}\cap\left(i_{u}\cup i_{l}\right)=\left\{ d_{u,1}^{K},\ldots,d_{u,\text{ord}(i_{u})}^{K},d_{l,1}^{K},\ldots,d_{l,\text{ord}(i_{l})}^{K}\right\} $)
is completely specified by the following chain of inequalities
\begin{equation}
\begin{cases}
d_{u,1}^{K}\lessdot_{K}d_{l,1}^{K}\lessdot_{K}d_{u,2}^{K}\lessdot_{K}d_{l,2}^{K}\lessdot_{K}...\lessdot_{K}d_{l,\text{ord}(i_{l})}^{K}\lessdot_{K}d_{u,\text{ord}(i_{u})}^{K} & \text{ ~if shape}(f)=\mathtt{W}\\
d_{l,1}^{K}\lessdot_{K}d_{u,1}^{K}\lessdot_{K}d_{l,2}^{K}\lessdot_{K}d_{u,2}^{K}\lessdot_{K}...\lessdot_{K}d_{u,\text{ord}(i_{u})}^{K}\lessdot_{K}d_{l,\text{ord}(i_{l})}^{K} & \text{ ~if shape}(f)=\mathtt{N}\\
d_{l,1}^{K}\lessdot_{K}d_{u,1}^{K}\lessdot_{K}d_{l,2}^{K}\lessdot_{K}d_{u,2}^{K}\lessdot_{K}...\lessdot_{K}d_{l,\text{ord}(i_{l})}^{K}\lessdot_{K}d_{u,\text{ord}(i_{u})}^{K} & \text{ ~if shape}(f)=\mathtt{R}\\
d_{u,1}^{K}\lessdot_{K}d_{l,1}^{K}\lessdot_{K}d_{u,2}^{K}\lessdot_{K}d_{l,2}^{K}\lessdot_{K}...\lessdot_{K}d_{u,\text{ord}(i_{u})}^{K}\lessdot_{K}d_{l,\text{ord}(i_{l})}^{K} & \text{ ~if shape}(f)=\mathtt{L}
\end{cases}\label{eq:coverf}
\end{equation}
The structure of the set $\mathscr{C}_{K}\left(f\right)$ is independent
of the Kekulé structure $K$ used for its construction and depends
only on the structural parameters of $\boldsymbol{S}$: the shape
of a given fragment $f$ of $\boldsymbol{S}$ is obviously independent
of $K$ and the orders of its both interfaces, $\text{ord}(i_{u})$
and $\text{ord}(i_{l})$, are independent of $K$ by Remark~\ref{rem:ordik}. 
\end{fact}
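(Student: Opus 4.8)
The plan is to read off the entire content of Eq.~(\ref{eq:coverf}) directly from Definition~\ref{def:relK}, using the left-to-right alternation of the double interface bonds of $f$ that was already extracted in the proof of Theorem~\ref{thm:zigzagsoc}. The first observation is that $\lessdot_K$ can only connect double interface bonds lying in \emph{adjacent} interfaces: the condition $|k'-k|=1$ in Definition~\ref{def:relK} forces one of the two bonds into the upper interface $i_u$ and the other into the lower interface $i_l$ of the same fragment. Hence every relation recorded in $\mathscr{C}_K(f)$ is of the form $d_{u,j}^K\lessdot_K d_{l,j'}^K$ or $d_{l,j}^K\lessdot_K d_{u,j'}^K$, and the whole task reduces to deciding, for each admissible pair of column indices, whether the second bullet of Definition~\ref{def:relK} is satisfied.

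I would then run the argument shape by shape. For a fixed shape, the definition of the shape of $f$ fixes which of $i_u,i_l$ contains the first (leftmost) interface bond of $f$, and this is precisely the datum that selects between the two cases $j'-j=0$ and $j'-j=1$ of Definition~\ref{def:relK}; substituting it turns the abstract condition into an explicit rule relating the column indices of two bonds in cover relation. Collecting all index pairs obeying this rule produces exactly the consecutive pairs of one of the four chains of Eq.~(\ref{eq:coverf}). That these pairs link up into a single chain, strictly alternating between $i_u$ and $i_l$, is guaranteed by the alternation of the double interface bonds established in the proof of Theorem~\ref{thm:zigzagsoc}, while the two endpoints are pinned down by the interface orders supplied by the First Rule (Theorem~\ref{thm:1st rule-1}): for shapes $\mathtt{W}$ and $\mathtt{N}$ the two orders differ by one, so the larger interface furnishes both ends of the chain, whereas for shapes $\mathtt{R}$ and $\mathtt{L}$ the orders coincide and the chain runs from a bond of one interface to a bond of the other.

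It then remains to verify completeness and independence of $K$. Completeness---that no relation beyond the listed consecutive pairs occurs---is immediate: two bonds of the \emph{same} interface violate $|k'-k|=1$, and two bonds of opposite interfaces whose column indices do not meet the explicit rule violate the second bullet of Definition~\ref{def:relK}, so neither type appears in $\mathscr{C}_K(f)$. For independence of $K$, I would observe that Definition~\ref{def:relK} refers only to the shape of $f$ and to the admissible column indices $1,\ldots,\text{ord}(i_u)$ and $1,\ldots,\text{ord}(i_l)$; the shape is a structural attribute of $\boldsymbol{S}$, and by Remark~\ref{rem:ordik} the orders $\text{ord}(i_u)$ and $\text{ord}(i_l)$ are topological invariants independent of the chosen Kekul\'e structure. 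Consequently the incidence pattern of $\mathscr{C}_K(f)$ is the same for every $K$. I expect the only genuinely delicate step to be the index bookkeeping in the second paragraph---keeping the four shapes apart and correctly identifying, for each, which interface supplies the endpoints of the chain---since a single off-by-one slip in the index rule would silently interchange the roles of $i_u$ and $i_l$.
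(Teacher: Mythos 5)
Your plan follows the same route as the paper, which in fact offers no separate proof of this Fact at all: the chains are presented as a direct transcription of Definition~\ref{def:relK} (with the alternation of Theorem~\ref{thm:zigzagsoc} implicit), and the $K$-independence is justified exactly as you do, via the $K$-independence of the fragment shape and of the interface orders (Remark~\ref{rem:ordik}). Your completeness remark (no relations within one interface, by $|k'-k|=1$) is a correct and welcome addition.

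However, there is a genuine gap, and it sits exactly where you predicted it would: the decisive step---``substituting [the shape] turns the abstract condition into an explicit rule \ldots{} producing exactly the consecutive pairs of one of the four chains of Eq.~(\ref{eq:coverf})''---is asserted but never executed, and when one executes it, it does \emph{not} reproduce Eq.~(\ref{eq:coverf}) as printed. Take shape $\mathtt{W}$: by the shape definition the first and last interface bonds of $f$ lie in the \emph{lower} interface, so by statements $(a)$ and $(b)$ of Theorem~\ref{thm:2nd rule-1} the first and last \emph{double} interface bonds lie in $i_{l}$, and the recursion in the proof of Theorem~\ref{thm:1st rule-1} gives $\text{ord}(i_{l})=\text{ord}(i_{u})+1$; hence the chain must read
\[
d_{l,1}^{K}\lessdot_{K}d_{u,1}^{K}\lessdot_{K}d_{l,2}^{K}\lessdot_{K}d_{u,2}^{K}\lessdot_{K}\cdots\lessdot_{K}d_{u,\text{ord}(i_{u})}^{K}\lessdot_{K}d_{l,\text{ord}(i_{l})}^{K},
\]
beginning and ending in $i_{l}$, whereas the printed $\mathtt{W}$ line begins and ends in $i_{u}$---which your own counting principle (``the larger interface furnishes both ends'') shows to be impossible, since an alternating sequence starting and ending in $i_{u}$ forces $\text{ord}(i_{u})=\text{ord}(i_{l})+1$. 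The same interchange of $u$ and $l$ infects all four lines: for shape $\mathtt{R}$ the first interface bond lies in $i_{u}$, so by Theorem~\ref{thm:2nd rule-1}$(a)$ and Theorem~\ref{thm:zigzagsoc} the chain must start with $d_{u,1}^{K}$, in agreement with the relation $d_{1,1}^{K}\lessdot_{K}d_{2,1}^{K}$ of Example~\ref{exa:M32poset} (whose middle fragment has shape $\mathtt{R}$, cf.\ the discussion of Fig.~\ref{fig:fragmentdef-1}) and contrary to the printed $\mathtt{R}$ line. So an honest completion of your argument proves the Fact with $i_{u}$ and $i_{l}$ exchanged (equivalently, with $\mathtt{W}\leftrightarrow\mathtt{N}$ and $\mathtt{R}\leftrightarrow\mathtt{L}$ swapped)---i.e., it exposes a systematic slip in the statement rather than confirming it. Because you never performed the bookkeeping, your proposal neither establishes the printed chains nor detects this discrepancy; yet that bookkeeping is the entire content of the Fact.
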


\begin{fact}
\label{fact:coverS}Consider a regular benzenoid strip $\boldsymbol{S}$
and its Kekulé structure $K$. The complete set $\mathscr{C}_{K}\left(\boldsymbol{S}\right)$
of cover relations $\lessdot_{K}$ that can be constructed for $\boldsymbol{S}$
is given by
\begin{equation}
\mathscr{C}_{K}\left(\boldsymbol{S}\right)=\bigcup_{k=2}^{m}\mathscr{C}_{K}\left(f_{k}\right)\label{eq:coverS}
\end{equation}
Again, following the discussion at the end of Fact~\ref{fact:coverf},
the structure of the set $\mathscr{C}_{K}\left(\boldsymbol{S}\right)$
given by Eq.~(\ref{eq:coverS}) is identical for every Kekulé structure
$K$ and thus independent of $K$. Note that the fragments $f_{1}$
and $f_{m+1}$ can be excluded from the sum in Eq.~(\ref{eq:coverS}),
because they contain only one non-empty interface each and consequently
do not contribute any cover relations to $\mathscr{C}_{K}\left(\boldsymbol{S}\right)$.
\end{fact}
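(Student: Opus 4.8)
The plan is to prove Eq.~(\ref{eq:coverS}) by first establishing the set equality $\mathscr{C}_{K}(\boldsymbol{S})=\bigcup_{k=1}^{m+1}\mathscr{C}_{K}(f_{k})$ and then showing that the two boundary terms $\mathscr{C}_{K}(f_{1})$ and $\mathscr{C}_{K}(f_{m+1})$ are empty. The crucial observation, on which both inclusions hinge, is that the cover relation $\lessdot_{K}$ is inherently \emph{local} to a single fragment: by the first condition of Definition~\ref{def:relK}, any relation $d_{k,j}^{K}\lessdot_{K}d_{k',j'}^{K}$ forces $\left|k'-k\right|=1$, so the two double interface bonds lie in the two adjacent interfaces $i_{\kappa-1}$ and $i_{\kappa}$ with $\kappa=\text{max}(k,k')$, and these are precisely the upper and lower interfaces of the single fragment $f_{\kappa}$.

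For the inclusion $\mathscr{C}_{K}(\boldsymbol{S})\subseteq\bigcup_{k}\mathscr{C}_{K}(f_{k})$ I would take an arbitrary element $d_{k,j}^{K}\lessdot_{K}d_{k',j'}^{K}$ of $\mathscr{C}_{K}(\boldsymbol{S})$, read off $\kappa=\text{max}(k,k')$ from the locality observation, and conclude that the relation belongs to the set $\mathscr{C}_{K}(f_{\kappa})$ of cover relations between the double interface bonds of $f_{\kappa}$ described in Fact~\ref{fact:coverf}. The reverse inclusion $\bigcup_{k}\mathscr{C}_{K}(f_{k})\subseteq\mathscr{C}_{K}(\boldsymbol{S})$ is immediate, since each $\mathscr{C}_{K}(f_{k})$ consists, by definition, of cover relations $\lessdot_{K}$ between double interface bonds of $\boldsymbol{S}$, and every such relation is by construction an element of $\mathscr{C}_{K}(\boldsymbol{S})$.

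It then remains to discard the boundary fragments. The fragment $f_{1}$ has upper interface $i_{0}$ and the fragment $f_{m+1}$ has lower interface $i_{m+1}$; both $i_{0}$ and $i_{m+1}$ correspond to the outermost partition lines $P_{0}$ and $P_{m+1}$, which cross no vertical bonds, so by the defining formula $K_{I}=E(K)\cap\bigcup_{k=1}^{m}i_{k}$ neither contains any double interface bond. Hence all double interface bonds of $f_{1}$ lie in the single interface $i_{1}$ and all those of $f_{m+1}$ in $i_{m}$; since a cover relation requires its two bonds to sit in \emph{two} adjacent interfaces (the $\left|k'-k\right|=1$ condition again), no cover relation can form within either fragment, giving $\mathscr{C}_{K}(f_{1})=\mathscr{C}_{K}(f_{m+1})=\varnothing$. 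Combining this with the set equality yields Eq.~(\ref{eq:coverS}), and the independence of $\mathscr{C}_{K}(\boldsymbol{S})$ from the choice of $K$ is inherited term-by-term from Fact~\ref{fact:coverf}.

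I do not expect a genuine obstacle here, as the statement is essentially an unpacking of Definition~\ref{def:relK} together with Fact~\ref{fact:coverf}; the only point requiring care is the boundary bookkeeping---verifying that the outer interfaces $i_{0}$ and $i_{m+1}$ are empty so that the two extreme fragments contribute no relations, which is precisely where an off-by-one slip in the fragment/interface indexing would be easiest to commit.
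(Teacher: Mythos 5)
Your proposal is correct and follows essentially the same route as the paper, which justifies this Fact inline: cover relations are local to single fragments by the $\left|k'-k\right|=1$ condition of Definition~\ref{def:relK}, the boundary fragments $f_{1}$ and $f_{m+1}$ contribute nothing because their outer interfaces $i_{0}$ and $i_{m+1}$ are empty, and the independence from $K$ is inherited from Fact~\ref{fact:coverf}. Your version merely makes the two inclusions and the boundary bookkeeping explicit, which the paper leaves implicit.
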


\noindent Below, in Examples~\ref{exa:M32poset} and~\ref{exa:O332poset},
we construct the Hasse diagram corresponding to the poset generated
by the complete set of cover relations $\mathscr{C}_{K}\left(\boldsymbol{S}\right)$
for two regular strips, $\boldsymbol{S}=M\left(3,2\right)$ and $\boldsymbol{S}=O\left(3,2,3\right)$.
\begin{example}
\label{exa:M32poset}Consider the two distinct Kekulé structures of
the parallelogram $M\left(3,2\right)$ shown in Figures~\ref{fig2KofM32}$\left(\text{a}\right)$
and $\left(\text{b}\right)$. Each interface of $M\left(3,2\right)$,
$i_{1}$ and $i_{2}$, has order one and contains one double bond,
denoted as $d_{1,1}^{K}$ and $d_{2,1}^{K}$, respectively. The set
$K_{I}$ thus contains two elements, $K_{I}=\left\{ d_{1,1}^{K},d_{2,1}^{K}\right\} $.
Definition~\ref{def:relK} allows us to establish one cover relation,
$d_{1,1}^{K}\lessdot_{K}d_{2,1}^{K}$, which is identical for each
selected Kekulé structure\textemdash in fact, since Definition~\ref{def:relK}
relies only on the indices of the elements of $K_{I}$, this cover
relation holds for every Kekulé structure of $M\left(3,2\right)$.
According to Theorem~\ref{thm:zigzagsoc}, the relation $d_{1,1}^{K}\lessdot_{K}d_{2,1}^{K}$
implies that $d_{1,1}^{K}$ is located to the left of $d_{2,1}^{K}$;
this is easily verified for any given Kekulé structure such as the
ones given in Fig.~\ref{fig2KofM32}$\left(\text{a}\right)$ and
$\left(\text{b}\right)$. The resulting Hasse diagram for $M\left(3,2\right)$
for the posets $K_{I}$ with the derived relation $<_{K}$ is shown
in Figure~\ref{fig2KofM32}$\left(\text{c}\right)$.
\begin{figure}[H]
\begin{centering}
\includegraphics[scale=0.8]{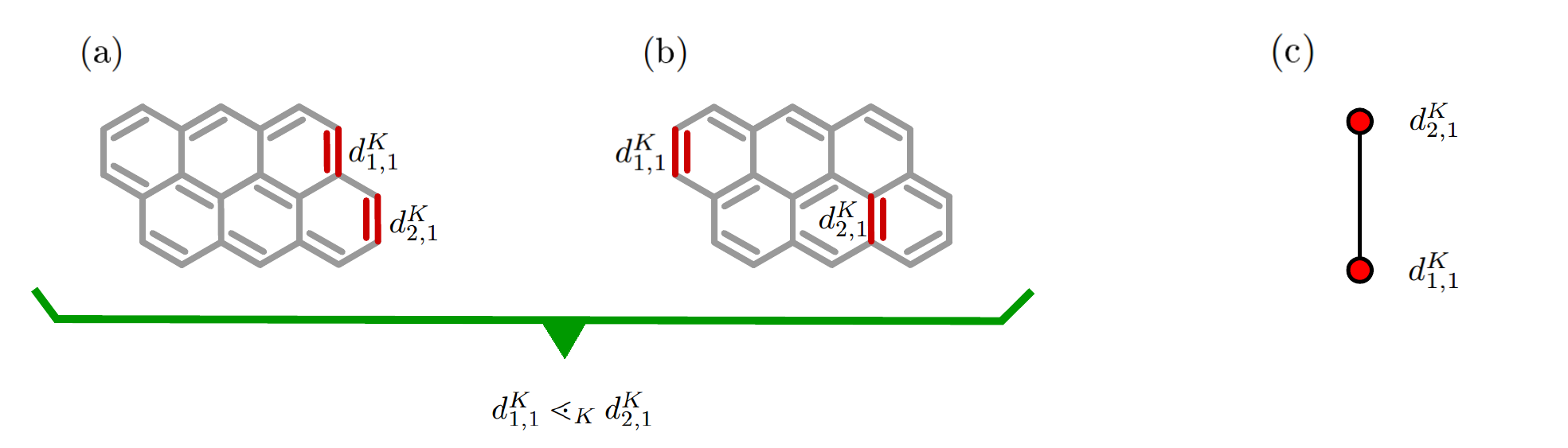}
\par\end{centering}
\caption{The double interface bonds $d_{1,1}^{K}$ and $d_{2,1}^{K}$ in two
distinct Kekulé structures $\left(\text{a}\right)$ and $\left(\text{b}\right)$
of the parallelogram $M\left(3,2\right)$ are related by the same
cover relation $d_{1,1}^{K}\lessdot_{K}d_{2,1}^{K}$, which indicates
that $d_{1,1}^{K}$ must be located to the left of $d_{2,1}^{K}$.
In $\left(\text{c}\right)$ we show the resulting Hasse diagram (identical
for every Kekulé structure of $M\left(3,2\right)$) corresponding
to the cover relation $d_{1,1}^{K}\lessdot_{K}d_{2,1}^{K}$. \label{fig2KofM32}}
\end{figure}
\end{example}

\begin{example}
\label{exa:O332poset}Consider two distinct Kekulé structures of the
hexagonal graphene flake $O\left(3,2,3\right)$ shown in Figures~\ref{fig2KofO332}$\left(\text{a}\right)$
and $\left(\text{b}\right)$. The orders of the interfaces $i_{1},\ldots,i_{4}$
are $1,2,2,1$, respectively. The set $K_{I}$ contains thus six elements,
$K_{I}=\left\{ d_{1,1}^{K},d_{2,1}^{K},d_{2,2}^{K},d_{3,1}^{K},d_{3,2}^{K},d_{4,1}^{K}\right\} $.
Definition~\ref{def:relK} allows us to establish seven cover relations
(for the detailed list, see Figure~\ref{fig2KofO332}). It can be
verified that these cover relations are identical for every Kekulé
structure of $O\left(3,2,3\right)$, and that Theorem~\ref{thm:zigzagsoc}
holds for each of these Kekulé structures. The Hasse diagram corresponding
to the posets $K_{I}$ of $O\left(3,2,3\right)$ is shown in Figure~\ref{fig2KofO332}$\left(\text{c}\right)$.
\begin{figure}[H]
\begin{centering}
\includegraphics[scale=0.6]{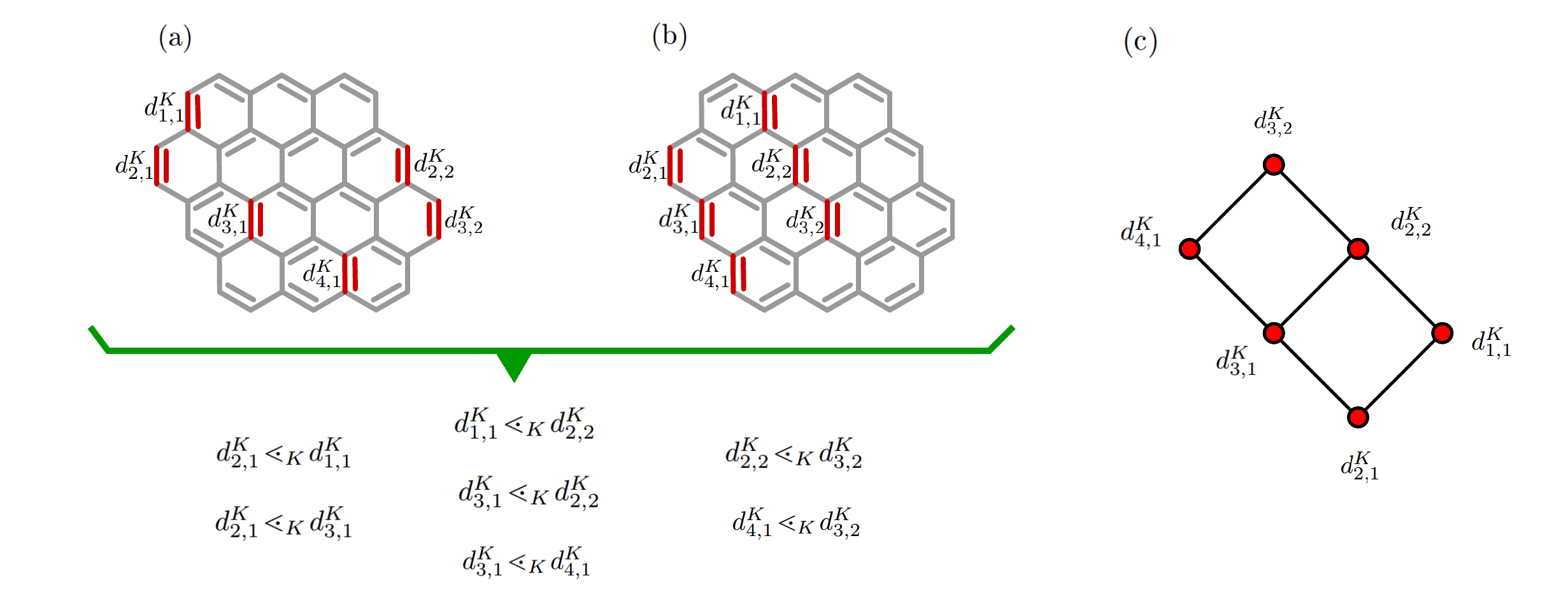}
\par\end{centering}
\caption{$\left(\text{a}\right)$ and $\left(\text{b}\right)$ Two distinct
Kekulé structures of the hexagonal graphene flake $O\left(3,2,3\right)$.
The cover relations determined from Definition~\ref{def:relK} are
the same for both Kekulé structures. $\left(\text{c}\right)$ The
resulting Hasse diagram for the partial order $<_{K}$ induced on
the set $K_{I}=\left\{ d_{1,1}^{K},d_{2,1}^{K},d_{2,2}^{K},d_{3,1}^{K},d_{3,2}^{K},d_{4,1}^{K}\right\} $
by the seven listed cover relations is identical for every Kekulé
structure of $O\left(3,2,3\right)$. \label{fig2KofO332}}
\end{figure}
\end{example}

We have demonstrated before that the structure of the set $K_{I}$
given by Eq.~(\ref{eq:KIunion}) is identical for every Kekulé structure
$K$ of $\boldsymbol{S}$. Similarly, we have demonstrated that the
structure of the set of cover relations $\mathscr{C}_{K}\left(\boldsymbol{S}\right)$
given by Eq.~(\ref{eq:coverS}) is identical for every Kekulé structure
$K$ of $\boldsymbol{S}$. This shows that the corresponding Hasse
diagrams (for example those constructed in Examples~\ref{exa:M32poset}
and~\ref{exa:O332poset}) are also independent of the choice of Kekulé
structure $K$ used for their construction; the only difference between
two Hasse diagrams constructed using two distinct Kekulé structures
is that the corresponding poset elements (i.e., the Hasse diagram
vertices) $d_{k,j}^{K}$ and $d_{k,j}^{K'}$ may stand for different
interface edges of $\boldsymbol{S}$. This notion can be further formalized
by introducing a poset $\mathcal{S}$ which is isomorphic to all the
posets $K_{I}$ of $\boldsymbol{S}$.
\begin{defn}
\label{def:PosetS}Consider a regular benzenoid strip $\boldsymbol{S}$
with $m$ interfaces $i_{1},\ldots,i_{m}$ of orders\linebreak{}
$\text{ord}(i_{1}),\ldots,\text{ord}(i_{m})$, respectively. We define
the poset $\mathcal{S}$ as
\begin{eqnarray*}
\mathcal{S} & = & \bigcup_{k=1}^{m}\bigcup_{j=1}^{\text{ord}(i_{k})}\left\{ s_{k,j}\right\} 
\end{eqnarray*}
 together with the partial order $<_{\mathcal{S}}$ defined via its
cover relations: Two elements $s_{k,j},s_{k',j'}\in\mathcal{S}$ stand
in the cover relation $s_{k,j}\lessdot_{\mathcal{S}}s_{k',j'}$ if
and only if
\begin{itemize}
\item $\left|k'-k\right|$$=1$ and
\item $j'-j=\begin{cases}
0 & \text{when the first interface bond of \ensuremath{f_{\kappa}} belongs to }i_{k},\\
1 & \text{when the first interface bond of \ensuremath{f_{\kappa}} belongs to }i_{k'},
\end{cases}$
\end{itemize}
\noindent where $\kappa=\text{max}(k,k')$. The elements $s_{k,j}$
of $\mathcal{S}$ are called the \emph{double interface bonds of $\boldsymbol{S}$},
or \emph{\soc s} for short. The set $\mathcal{S}$ will be referred
to as the \emph{set of \soc s.}
\end{defn}

\noindent Examples of posets $\mathcal{S}$ for nine classes of regular
benzenoid strips are shown in Figure~\ref{fig2KofO332-1}. 
\begin{figure}[H]
\begin{centering}
\includegraphics[scale=0.35]{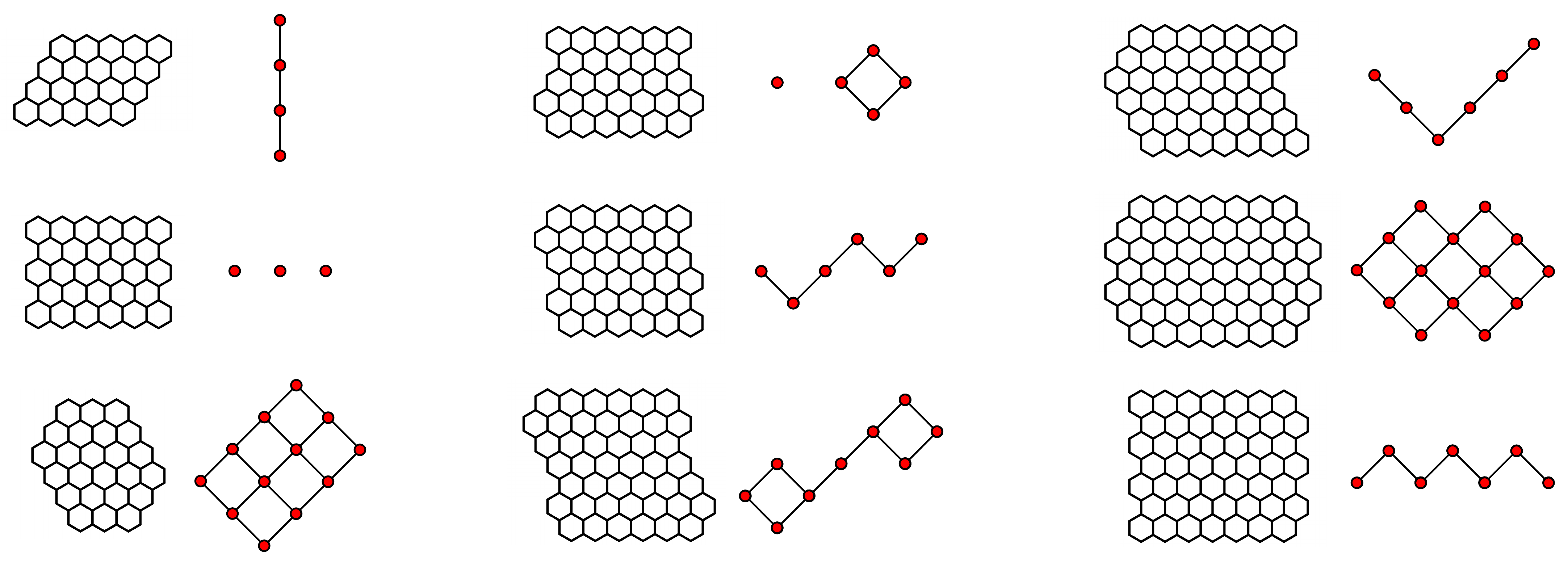}
\par\end{centering}
\caption{Examples of posets $\mathcal{S}$ constructed for nine selected regular
benzenoid strips $\boldsymbol{S}$. \label{fig2KofO332-1}}
\end{figure}

\noindent 
\begin{figure}[H]
\begin{centering}
\includegraphics[scale=0.45]{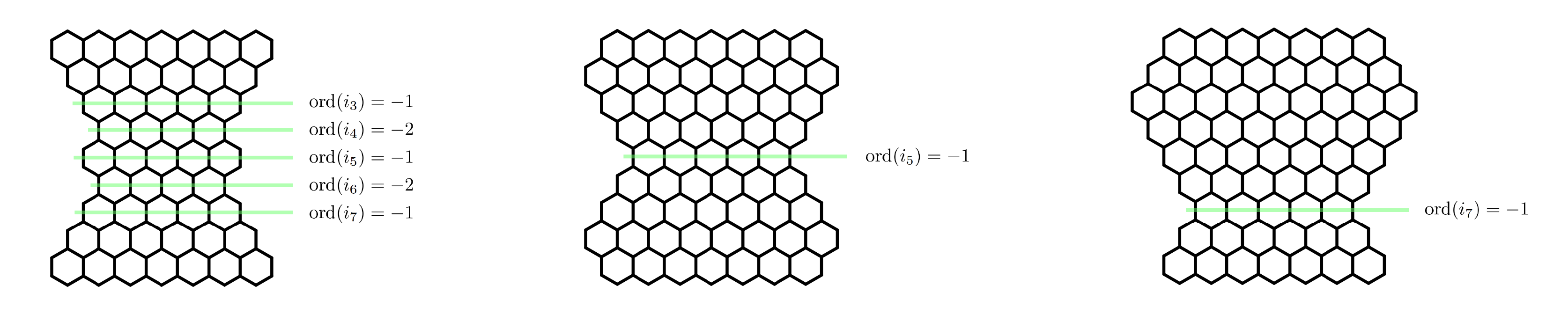}
\par\end{centering}
\caption{Not every regular strip $\boldsymbol{S}$ can be associated with a
poset $\mathcal{S}$. The three regular strips shown above contain
at least one interface with a negative order (marked in green online).
Consequently, by Example~11 of \cite{langner2019BasicApplications6},
all the three structures are non-Kekuléan, there is no poset associated
with them, and their ZZ polynomials are identically equal to 0. \label{fig2KofO332-1-1}}
\end{figure}

It is now straightforward to construct an obvious isomorphism $\tau_{K}$
between $\mathcal{S}$ and any $K_{I}$:
\begin{defn}
Consider a Kekulé structure $K$ of $\boldsymbol{S}$. Then the map
$\tau_{K}:\mathcal{S}\rightarrow K_{I}\subset E\left(K\right)$ is
defined by
\[
\tau_{K}:s_{k,j}\mapsto d_{k,j}^{K}\text{ for every }k\in[\,m\,],j\in[\,\text{ord}(i_{k})\,].
\]
\end{defn}

\begin{lem}
\label{<SK}If two \soc s stand in the relation $s_{k,j}\lessdot_{\mathcal{S}}s_{k',j'}$,
then for any Kekulé structure $K$ of $\boldsymbol{S}$ we have $\tau_{K}(s_{k,j})\lessdot_{K}\tau_{K}(s_{k',j'})$.
\end{lem}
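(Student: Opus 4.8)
The plan is to observe that the two cover relations in question---$\lessdot_{\mathcal{S}}$ from Definition~\ref{def:PosetS} and $\lessdot_{K}$ from Definition~\ref{def:relK}---are specified by \emph{literally identical} conditions on the index pairs $(k,j)$ and $(k',j')$, so that the index-preserving map $\tau_{K}$ automatically carries one into the other. Concretely, I would first record that both definitions impose exactly the same two requirements: $\left|k'-k\right|=1$, together with $j'-j=0$ or $j'-j=1$ according to whether the first interface bond of $f_{\kappa}$ (with $\kappa=\text{max}(k,k')$) lies in $i_{k}$ or in $i_{k'}$. Neither condition makes any reference to a particular Kekul\'e structure; both depend only on the structural parameters of $\boldsymbol{S}$, namely the fragment shapes and the interface orders.

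Next I would check that $\tau_{K}$ is defined on both arguments. Since $s_{k,j}\lessdot_{\mathcal{S}}s_{k',j'}$ presupposes $s_{k,j},s_{k',j'}\in\mathcal{S}$, we have $k,k'\in[\,m\,]$, $j\in[\,\text{ord}(i_{k})\,]$, and $j'\in[\,\text{ord}(i_{k'})\,]$; these are precisely the ranges on which $\tau_{K}$ is defined and for which $d_{k,j}^{K}$ and $d_{k',j'}^{K}$ exist as the $j^{\text{th}}$ and ${j'}^{\text{th}}$ elements of $K_{I}\cap i_{k}$ and $K_{I}\cap i_{k'}$ from the left. Thus $\tau_{K}(s_{k,j})=d_{k,j}^{K}$ and $\tau_{K}(s_{k',j'})=d_{k',j'}^{K}$ are both well-defined, and $\tau_{K}$ preserves each of the two indices separately.

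The conclusion then follows by direct substitution. Assuming $s_{k,j}\lessdot_{\mathcal{S}}s_{k',j'}$, the pair $(k,j),(k',j')$ satisfies the two bulleted conditions of Definition~\ref{def:PosetS}; but these are verbatim the two bulleted conditions of Definition~\ref{def:relK} applied to the images $d_{k,j}^{K}$ and $d_{k',j'}^{K}$, whence $d_{k,j}^{K}\lessdot_{K}d_{k',j'}^{K}$, i.e.\ $\tau_{K}(s_{k,j})\lessdot_{K}\tau_{K}(s_{k',j'})$.

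I do not anticipate any genuine obstacle: the lemma is essentially a restatement of the fact that $\mathcal{S}$ was constructed precisely to abstract the common index structure shared by all the sets $K_{I}$, so the argument reduces to matching the two definitions symbol for symbol. The only point deserving a word of care is the domain check above, guaranteeing that both $d_{k,j}^{K}$ and $d_{k',j'}^{K}$ genuinely occur in $K_{I}$; this in turn rests on the invariance of the interface orders $\text{ord}(i_{k})$ with respect to the choice of $K$ (Remark~\ref{rem:ordik}), which is exactly what makes each $\tau_{K}$ an index-preserving bijection onto $K_{I}$.
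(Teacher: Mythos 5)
Your proposal is correct and follows essentially the same route as the paper's own proof: both arguments reduce to the observation that the bulleted conditions in Definition~\ref{def:relK} and Definition~\ref{def:PosetS} are verbatim identical conditions on the index pairs, so $\tau_{K}$ carries one cover relation into the other by direct substitution. Your additional check that $d_{k,j}^{K}$ and $d_{k',j'}^{K}$ are well-defined (via the $K$-independence of the interface orders) is a sensible precaution that the paper leaves implicit, but it does not change the substance of the argument.
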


\begin{proof}
This fact is a direct consequence of the similarity between Definitions~\ref{def:relK}
and \ref{def:PosetS} of the relations $\lessdot_{K}$ and $\lessdot_{\mathcal{S}}$.
If $s_{k,j}\lessdot_{\mathcal{S}}s_{k',j'}$, then the indices $k$,
$k'$, $j$ and $j'$ satisfy the two conditions in Def.~\ref{def:PosetS},
which are identical to the two conditions in Def.~\ref{def:relK},
meaning that $\tau_{K}(s_{k,j})=d_{k,j}^{K}\lessdot_{K}d_{k',j'}^{K}=\tau_{K}(s_{k',j'})$.
\end{proof}
\begin{rem}
Before continuing the exposition, let us summarize briefly the structure
of the studied regular benzenoid strip $\boldsymbol{S}$. We recollect
that $\boldsymbol{S}$ contains $m$ interfaces $i_{1},\ldots,i_{m}$.
The interface $i_{k}$ consists of $n+\text{ord}(i_{k})$ edges, out
of which $n$ have single covering character and $\text{ord}(i_{k})$
have double covering character in every Kekulé structure $K$ of $\boldsymbol{S}$.
\textit{\emph{The $j^{\text{th}}$ bond in the interface $i_{k}$
(with $j\in[\,n+\text{ord}(i_{k})\,]$) is denoted by $e_{k,j}$,
regardless of its covering character. Additionally, for each }}Kekulé
structure $K$, the \textit{\emph{$j^{\text{th}}$}} double interface
bond \textit{\emph{in the interface $i_{k}$ (with $j\in[\,\text{ord}(i_{k})\,]$)}}
is denoted by $d_{k,j}^{K}$. Consequently, for each $d_{k,j}^{K}$
there exists an index $\tilde{j}=\tilde{j}\left(K,k,j\right)\in[\,n+\text{ord}(i_{k})\,]$
such that $d_{k,j}^{K}\equiv e_{k,\tilde{j}}$. Combining this fact
with the definition of $\tau_{K}$, we can see that for each $s_{k,j}$
there exists an index $\tilde{j}=\tilde{j}\left(K,k,j\right)\in[\,n+\text{ord}(i_{k})\,]$
such that $\tau_{K}(s_{k,j})=e_{k,\tilde{j}}$. We use this fact in
Definition~\ref{def:posK} to define a new ($K$-dependent) property
of $s_{k,j}$.
\end{rem}

\begin{defn}
\label{def:posK}Let $\text{pos}_{K}:\mathcal{S}\rightarrow\mathbb{N}$
be the map which assigns to each $s_{k,j}$ a number $\text{pos}_{K}\left(s_{k,j}\right)$
such that $\tau_{K}(s_{k,j})=e_{k,\text{pos}_{K}\left(s_{k,j}\right)}$.
This number $\text{pos}_{K}\left(s_{k,j}\right)$ is called the \emph{position}
of $s_{k,j}$ in $K$.
\end{defn}

It is now clear that we can define any Kekulé structure $K$ of $\boldsymbol{S}$
simply by specifying $\mathcal{S}$ and the corresponding map $\text{pos}_{K}\left(s_{k,j}\right)$:
The set $K_{I}=\left\{ e_{k,\text{pos}_{K}\left(s_{k,j}\right)}\,\vert\,s_{k,j}\in\mathcal{S}\right\} $
uniquely determines $K$ by Lemma~\ref{lem:Fulldet}. 

\subsection{Equivalence between the extended strict order polynomial and the
ZZ polynomial}

We have seen in the previous subsection that the set $\mathcal{S}$
together with the relation $<_{\mathcal{S}}$ constitutes a partially
ordered set. The pair $\left(\mathcal{S},<_{\mathcal{S}}\right)$\textemdash denoted
concisely also as $\mathcal{S}$\textemdash encodes the relative positions
of double interface bonds within all Kekulé structures of a given
regular strip. The absolute positions of the elements of $K_{I}$
in a given Kekulé structure $K$ are encoded by the triple $\left(\mathcal{S},<_{\mathcal{S}},\text{pos}_{K}\right)$.
We will show in the following that, in order to define $K$, it is
sufficient to specify $\text{pos}_{K}\left(s_{k,j}\right)$ not for
all elements of $\mathcal{S}$, but only for the elements of the subposet
$\mathcal{A}_{K}\subset\mathcal{S}$ defined as follows.
\begin{defn}
Let $K$ be a Kekulé structure of $\boldsymbol{S}$. Then, we denote
by $\mathcal{A}_{K}$ the induced subposet of $\mathcal{S}$ given
by 
\[
\mathcal{A}_{K}=\left\{ s_{k,j}\in\mathcal{S}\,\vert\,\text{\ensuremath{\tau_{K}}\ensuremath{\left(s_{k,j}\right)} participates in a proper sextet of \ensuremath{K}}\right\} .
\]
\end{defn}

We will in the following Lemmata~\ref{thm:Deconstruct_CC} and \ref{thm:Construct_CC}
establish a one-to-one correspondence between Kekulé structures of
$\boldsymbol{S}$ and certain order-preserving maps from the elements
of subposets $\mathcal{A}_{K}\subset\mathcal{S}$ to the numbers encoding
the absolute positions of \soc s in $K$. This will allow us to establish,
in Theorem~\ref{thm:PolyEqui}, the equivalence between the ZZ polynomial
$\text{ZZ}(\boldsymbol{S},x)$ of a Kekuléan regular $m$-tier strip
$\boldsymbol{S}$ and the extended strict order polynomial $\text{E}_{\mathcal{S}}^{\circ}(n,x+1)$
of the corresponding poset $\mathcal{S}$.
\begin{lem}
\label{thm:Deconstruct_CC}Let $\boldsymbol{S}$ be a regular $m$-tier
strip of length $n$ with the poset of \soc s $\mathcal{S}$, and
let $K$ be a Kekulé structure of $\boldsymbol{S}$. Then, there is
exactly one strictly order-preserving map $\mu:\mathcal{A}_{K}\rightarrow\left[\,n\,\right]$
such that
\begin{enumerate}
\item $\text{\emph{pos}}_{K}\left(s_{k,j}\right)=\mu\left(s_{k,j}\right)+j$
for every $s_{k,j}\in\mathcal{A}_{K}$ and
\item $\text{\emph{pos}}_{K}\left(s_{k,j}\right)=\text{\emph{max}}\left(\left\{ \mu\left(s_{k',j'}\right)\,\vert\,s_{k',j'}\in\mathcal{A}_{K},s_{k',j'}<_{\mathcal{S}}s_{k,j}\right\} \cup\left\{ 0\right\} \right)+j$
\\
$\text{for every }s_{k,j}\in\mathcal{S}\setminus\mathcal{A}_{K}.$
\end{enumerate}
\end{lem}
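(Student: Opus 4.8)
Since the only freedom in $\mu$ is its values on $\mathcal{A}_K$, and condition~(1) dictates those values outright, I would first dispose of uniqueness: any admissible $\mu$ must satisfy $\mu(s_{k,j})=\text{pos}_K(s_{k,j})-j$ for every $s_{k,j}\in\mathcal{A}_K$, so at most one map can work. Thus the entire content is existence, and I would prove it by showing that this forced formula does the job. It is convenient to extend the definition to all of $\mathcal{S}$ by writing $\mu^{*}(s_{k,j}):=\text{pos}_K(s_{k,j})-j$; then $\mu$ is the restriction $\mu^{*}|_{\mathcal{A}_K}$. The key interpretation, read off from Definition~\ref{def:posK}, is that $\mu^{*}(s_{k,j})$ equals the number of single interface bonds lying strictly to the left of $d_{k,j}^{K}=\tau_K(s_{k,j})$ inside the interface $i_k$: there are $\text{pos}_K(s_{k,j})-1$ interface bonds to its left, of which exactly $j-1$ are double. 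Because each interface of $\boldsymbol{S}$ carries exactly $n$ single bonds (Theorem~\ref{thm:1st rule-1} and Remark~\ref{rem:ordik}), we immediately get $0\le\mu^{*}\le n$. Writing $B(s):=\max\bigl(\{\mu^{*}(s')\mid s'<_{\mathcal{S}}s\}\cup\{0\}\bigr)$, the things still to prove are that $\mu$ is strictly order-preserving on $\mathcal{A}_K$, that it takes values in $[\,n\,]$ (i.e. $\mu\ge1$ there), and that condition~(2) holds on $\mathcal{S}\setminus\mathcal{A}_K$.

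I would organize everything around a single dichotomy, the \emph{floor property}: for every $s_{k,j}\in\mathcal{S}$ one has $\mu^{*}(s_{k,j})\ge B(s_{k,j})$, with
\[ \mu^{*}(s_{k,j})>B(s_{k,j})\quad\Longleftrightarrow\quad s_{k,j}\in\mathcal{A}_K . \]
The weak inequality $\mu^{*}\ge B$ is the routine part, and it suffices to verify it across a single cover $s_{k,j}\lessdot_{\mathcal{S}}s_{k',j'}$. By Lemma~\ref{<SK} the images satisfy $d_{k,j}^{K}\lessdot_K d_{k',j'}^{K}$, so Theorem~\ref{thm:zigzagsoc} places $d_{k,j}^{K}$ to the left of $d_{k',j'}^{K}$ with only single interface bonds of the fragment between them. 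Combining this with the explicit alternation pattern of the double interface bonds inside the fragment furnished by Fact~\ref{fact:coverf}, and with the fact that consecutive interfaces interleave with a half-hexagon horizontal offset, a direct left-to-right count of single bonds yields $\mu^{*}(s_{k,j})\le\mu^{*}(s_{k',j'})$; transitivity then gives $\mu^{*}(s)\ge B(s)$ for every $s$.

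The equivalence in the floor property is the crux and the step I expect to be the main obstacle, since it is exactly where the chemical notion of a proper sextet must be translated into the order-theoretic statement ``$\mu^{*}$ strictly exceeds the floor forced from below.'' I would prove it through a local analysis of the covering pattern around $d_{k,j}^{K}$: its two endpoints each already carry the double bond $d_{k,j}^{K}$, forcing the four slanted bonds incident to those endpoints to be single, so whether $\tau_K(s_{k,j})$ belongs to a proper sextet is decided by the covering character of the neighbouring interface bond $e_{k,\text{pos}_K(s_{k,j})\pm1}$ together with the next pair of slants\,---\,a pattern tightly constrained by Theorem~\ref{thm:2nd rule-1}. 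The two directions read: if $d_{k,j}^{K}$ sits in a proper sextet, then it has a single interface bond to its left beyond those forced by its lower relations, so $\mu^{*}(s_{k,j})$ overshoots $B(s_{k,j})$; conversely, if $\mu^{*}(s_{k,j})>B(s_{k,j})$, that unforced extra single bond to the left, together with the mandatory single slants, assembles a proper sextet at $s_{k,j}$, whence $s_{k,j}\in\mathcal{A}_K$. Making ``forced/unforced'' precise\,---\,i.e. identifying $B(s_{k,j})$ with the number of single bonds that must lie to the left of $d_{k,j}^{K}$ once the double bonds it covers are placed\,---\,is the delicate technical heart, which I would dispatch by the same fragment-by-fragment bookkeeping used for the weak inequality.

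Granting the floor property, the remaining conclusions follow formally. If $s_{k,j}\in\mathcal{A}_K$ then $\mu(s_{k,j})=\mu^{*}(s_{k,j})>B(s_{k,j})\ge0$, hence $\mu\ge1$ and the range is $[\,n\,]$; moreover, for $s<_{\mathcal{S}}t$ with $s,t\in\mathcal{A}_K$ we get $\mu(s)\le B(t)<\mu^{*}(t)=\mu(t)$, since $s$ is among the elements below $t$ entering the maximum defining $B(t)$, which is precisely strict order preservation. If $s_{k,j}\notin\mathcal{A}_K$ then $\mu^{*}(s_{k,j})=B(s_{k,j})$, and a downward induction on the poset\,---\,using the floor property to replace every non-sextet element below $s_{k,j}$ by the sextets beneath it\,---\,identifies $B(s_{k,j})$ with $\max\bigl(\{\mu(s')\mid s'\in\mathcal{A}_K,\ s'<_{\mathcal{S}}s_{k,j}\}\cup\{0\}\bigr)$, which is exactly the right-hand side of condition~(2). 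This establishes existence and, together with the uniqueness noted at the outset, proves the lemma.
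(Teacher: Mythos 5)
Your proposal is correct and follows essentially the same route as the paper: your forced map $\mu^{*}$ is the paper's $\mu_K$, and your ``floor property'' (weak inequality plus the equivalence ``strict excess over the floor $\iff$ membership in $\mathcal{A}_K$'') is precisely the paper's Lemma~\ref{lem:nonproperEq}, proved there by the same hexagon-to-the-left covering analysis you sketch. The surrounding steps also correspond one-to-one to the paper's argument: your single-bond count giving codomain $[\,0,n\,]$ matches Lemmata~\ref{lem:orderpreserving} and~\ref{lem:codomain}, your deduction of strictness on $\mathcal{A}_K$ matches Lemma~\ref{lem:strictn}, your downward induction replacing non-sextet elements by sextet elements beneath them is the paper's minimal-element argument for $(i')\Rightarrow(i)$, and your uniqueness observation is Lemma~\ref{lem:muunique}.
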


\begin{lem}
\label{thm:Construct_CC}Let $\boldsymbol{S}$ be a regular $m$-tier
strip of length $n$ with the poset of \soc s $\mathcal{S}$. Let
further $\mathcal{A}\subset\mathcal{S}$ be an induced subposet of
$\mathcal{S}$, and let $\mu:\mathcal{A}\rightarrow\left[\,n\,\right]$
be a strictly order-preserving map. Then, there exists exactly one
Kekulé structure $K$ such that
\begin{enumerate}
\item $\text{\emph{pos}}_{K}\left(s_{k,j}\right)=\mu\left(s_{k,j}\right)+j$
for every $s_{k,j}\in\mathcal{A}$,
\item $\text{\emph{pos}}_{K}\left(s_{k,j}\right)=\text{\emph{max}}\left(\left\{ \mu\left(s_{k',j'}\right)\,\vert\,s_{k',j'}\in\mathcal{A},s_{k',j'}<_{\mathcal{S}}s_{k,j}\right\} \cup\left\{ 0\right\} \right)+j$\\
 $\text{for every }s_{k,j}\in\mathcal{S}\setminus\mathcal{A},$
\item $\mathcal{A}_{K}=\mathcal{A}$.
\end{enumerate}
\end{lem}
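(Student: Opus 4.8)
The plan is to read the two position formulas as an explicit recipe, verify that this recipe describes a legitimate system of double interface bonds by checking the hypotheses of the Third Rule (Theorem~\ref{thm:Construct_CC-1}), and only then read off the Kekul\'e structure $K$ through Lemma~\ref{lem:Fulldet}. Concretely, for each $s_{k,j}\in\mathcal{S}$ set $v_{k,j}=\mu(s_{k,j})$ when $s_{k,j}\in\mathcal{A}$ and $v_{k,j}=\max\bigl(\{\mu(s_{k',j'})\mid s_{k',j'}\in\mathcal{A},\,s_{k',j'}<_{\mathcal{S}}s_{k,j}\}\cup\{0\}\bigr)$ otherwise, so that the prescribed position is $\text{pos}(s_{k,j})=v_{k,j}+j$. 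The number $v_{k,j}$ is exactly the count of single interface bonds lying to the left of the $j$-th double bond in $i_k$, and the whole argument rests on one elementary monotonicity property: whenever $s<_{\mathcal{S}}s'$ one has $v(s)\le v(s')$, with strict inequality as soon as $s'\in\mathcal{A}$. This follows by a short case distinction from the fact that $\mu$ is strictly order-preserving together with the definition of $v$ on $\mathcal{S}\setminus\mathcal{A}$ as a maximum over predecessors.

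First I would check that the assignment is admissible inside each interface. The double interface bonds of a fixed interface $i_k$ form a chain $s_{k,1}<_{\mathcal{S}}\dots<_{\mathcal{S}}s_{k,\text{ord}(i_k)}$ in $\mathcal{S}$; this uses Fact~\ref{fact:coverf} together with the observation, from the proof of Theorem~\ref{thm:1st rule-1}, that the orders of neighbouring interfaces differ by at most one, so the adjacent fragment always supplies the interleaving bonds needed to link consecutive $s_{k,t}$. Monotonicity then gives $\text{pos}(s_{k,j+1})-\text{pos}(s_{k,j})=v_{k,j+1}-v_{k,j}+1\ge1$, so the positions strictly increase along $i_k$, and since $0\le v_{k,j}\le n$ they lie in $[\,1,\,n+\text{ord}(i_k)\,]$. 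Hence the prescribed positions are distinct and legitimate, and $K_v$ contains exactly $\text{ord}(i_k)$ bonds in each $i_k$, which is condition $(a)$ of the Third Rule.

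The heart of the existence part is condition $(b)$, namely the Second Rule (Theorem~\ref{thm:2nd rule-1}) inside each fragment. Here I would invoke the half-hexagon offset of two adjacent interfaces: when the first interface bond of a fragment lies in interface $i_a$, a bond $e_{a,p}$ precedes a bond $e_{b,q}$ of the other interface $i_b$ exactly when $p\le q$. By Fact~\ref{fact:coverf} the cover chain of that fragment reads $s_{a,1}\lessdot_{\mathcal{S}}s_{b,1}\lessdot_{\mathcal{S}}s_{a,2}\lessdot_{\mathcal{S}}\dots$, and by Theorem~\ref{thm:zigzagsoc} the Second Rule is equivalent to the double bonds appearing geometrically in this chain order. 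The two resulting alternation inequalities, $\text{pos}(s_{a,t})\le\text{pos}(s_{b,t})$ and $\text{pos}(s_{b,t})<\text{pos}(s_{a,t+1})$, reduce after cancelling the index shift to $v(s_{a,t})\le v(s_{b,t})$ and $v(s_{b,t})\le v(s_{a,t+1})$, both immediate from monotonicity applied to the covers $s_{a,t}\lessdot_{\mathcal{S}}s_{b,t}$ and $s_{b,t}\lessdot_{\mathcal{S}}s_{a,t+1}$. With $(a)$ and $(b)$ secured, Theorem~\ref{thm:Construct_CC-1} yields a unique Kekul\'e structure $K$ with $K_I=\{e_{k,\text{pos}(s_{k,j})}\}$; because the positions increase along each interface, $\text{pos}_K$ coincides with the prescribed $\text{pos}$, so $K$ satisfies conditions 1 and 2.

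It remains to prove condition 3, $\mathcal{A}_K=\mathcal{A}$, and this is the step I expect to be the genuine obstacle, since it is the only place where the geometric notion of a proper sextet must be confronted head-on. The plan is to establish a purely local criterion: $\tau_K(s_{k,j})$ participates in a proper sextet precisely when it is separated from each of its covering predecessors by at least one single interface bond, i.e. when $v_{k,j}>\max_{c\,\lessdot_{\mathcal{S}}\,s_{k,j}}v(c)$. A short computation shows that $\max_{c\,\lessdot_{\mathcal{S}}\,s_{k,j}}v(c)$ equals the predecessor maximum $\max(\{\mu(s')\mid s'\in\mathcal{A},\,s'<_{\mathcal{S}}s_{k,j}\}\cup\{0\})$ appearing in the formulas. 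Since $\mu$ is strictly order-preserving, for $s_{k,j}\in\mathcal{A}$ the value $v_{k,j}=\mu(s_{k,j})$ lies strictly above this maximum, whereas for $s_{k,j}\notin\mathcal{A}$ formula 2 makes $v_{k,j}$ equal to it; thus the proper-sextet bonds are exactly those of $\mathcal{A}$. Finally, uniqueness is essentially free: conditions 1 and 2 determine $\text{pos}_K$ on all of $\mathcal{S}$ from the given data $(\mathcal{A},\mu)$, hence determine $K_I$, and Lemma~\ref{lem:Fulldet} forces $K$ to be unique. Alternatively, one may feed the constructed $K$ into Lemma~\ref{thm:Deconstruct_CC} and confirm that the round trip recovers $(\mathcal{A},\mu)$.
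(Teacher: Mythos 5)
Your proposal is correct and follows essentially the same route as the paper's own proof: you extend $\mu$ to all of $\mathcal{S}$ by the predecessor-maximum rule (the paper's auxiliary map $\tilde{\mu}$), verify conditions $(a)$ and $(b)$ of Theorem~\ref{thm:Construct_CC-1} via the within-interface chains and the fragment-wise alternation inequalities, invoke the Third Rule for existence, characterize membership in $\mathcal{A}_K$ by the strict-versus-equal predecessor-maximum dichotomy (which is precisely the paper's Lemma~\ref{lem:nonproperEq}), and obtain uniqueness because conditions 1 and 2 determine $K_I$ outright, so Lemma~\ref{lem:Fulldet} applies. The only cosmetic difference is that your uniqueness argument bypasses the paper's intermediate Lemma~\ref{lem:Uniqueness}, which is a harmless shortcut.
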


\noindent The proofs of Lemmata~\ref{thm:Deconstruct_CC} and\ \ref{thm:Construct_CC},
being rather long and technical, are exiled to the Appendix.

Lemmata~\ref{thm:Deconstruct_CC} and~\ref{thm:Construct_CC} are
complementary. Together they establish a one-to-one correspondence
between the set $\left\{ K\right\} $ of Kekulé structures of $\boldsymbol{S}$
and the set $\left\{ \mu:\mathcal{S}\supset\mathcal{A}\rightarrow\left[\,n\,\right]\right\} $
of strictly order-preserving maps from the induced subposets of $\mathcal{S}$
and the interval $\left[\thinspace n\thinspace\right]$. In particular,
the pair $\left(\mathcal{A},\mu\right)$ completely determines the
corresponding Kekulé structure $K$ in the following way. The cardinality
of the set $\mathcal{A}$ determines the number of proper sextets
in $K$; the positions of double interface bonds participating in
the proper sextets are fully and uniquely determined by the map $\mu$
using condition $1$ of Lemma\ \ref{thm:Construct_CC}. The positions
of the double interface bonds not participating in proper sextets
are fully and uniquely determined by the map $\mu$ using condition
$2$ of Lemma\ \ref{thm:Construct_CC}. Both sets of double interface
bonds define the set $K_{I}$ and\textemdash by Lemma~\ref{lem:Fulldet}\textemdash uniquely
determine the corresponding Kekulé structure $K$. This one-to-one
correspondence is used to conclude our investigations with
\begin{thm}
\label{thm:PolyEqui}Let $\boldsymbol{S}$ be a regular $m$-tier
strip of length $n$ with the poset of \soc s $\mathcal{S}$, and
consider an integer $k$. Then the number $a(\boldsymbol{S},k)$ of
Kekulé structures with exactly $k$ proper sextets is given by 
\begin{equation}
a(\boldsymbol{S},k)=\sum_{\substack{\mathcal{A}\subset\mathcal{S}\\
\left|\mathcal{A}\right|=k
}
}\Omega_{\mathcal{A}}^{\circ}(n),\label{eq:aSk}
\end{equation}
and the ZZ polynomial of $\boldsymbol{S}$ is given by the extended
strict order polynomial of $\mathcal{S}$
\begin{equation}
\text{\emph{ZZ}}(\boldsymbol{S},x)=\text{\emph{E}}_{\mathcal{S}}^{\circ}(n,x+1).\label{eq:ZZSx}
\end{equation}
\end{thm}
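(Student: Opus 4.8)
The plan is to read the statement as a graded reindexing of the one-to-one correspondence furnished by Lemmata~\ref{thm:Deconstruct_CC} and~\ref{thm:Construct_CC}, followed by the Zhang--Zhang identity~(\ref{eq:ZZz}) and the definition~(\ref{eq:GenOrderPoly}) of the extended strict order polynomial. Because the two Lemmata already carry all of the combinatorial and geometric content, the remaining work is bookkeeping organized around a single observation: the bijection between Kekul\'{e} structures and maps is \emph{graded} by the number of proper sextets.

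First I would confirm the grading identity: for every Kekul\'{e} structure $K$ of $\boldsymbol{S}$, the number of proper sextets of $K$ equals $\left|\mathcal{A}_K\right|$ (this is the content of the remark, after Lemma~\ref{thm:Construct_CC}, that the cardinality of $\mathcal{A}$ fixes the number of proper sextets). Within any hexagon the two interface (vertical) bonds belong to a common interface, and the fixed proper-sextet shape (left panel of Fig.~\ref{fig:propersextet}) makes exactly one of these two bonds a double bond, the other two double bonds being spine bonds; hence each proper sextet carries exactly one double interface bond, always on the same side of the hexagon. Since a given vertical edge is that fixed-side bond of a unique hexagon, distinct proper sextets carry distinct \soc s, so $\tau_K$ restricts to a bijection between the proper sextets of $K$ and the elements of $\mathcal{A}_K$, giving $\left|\mathcal{A}_K\right|=\#\{\text{proper sextets of }K\}$.

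Next I would establish~(\ref{eq:aSk}). Fix $k$. By Lemmata~\ref{thm:Deconstruct_CC} and~\ref{thm:Construct_CC} the assignment $K\mapsto(\mathcal{A}_K,\mu)$ is a bijection from the Kekul\'{e} structures of $\boldsymbol{S}$ onto the pairs $(\mathcal{A},\mu)$ in which $\mathcal{A}\subset\mathcal{S}$ is an induced subposet and $\mu:\mathcal{A}\to[\,n\,]$ is strictly order-preserving, with $\mathcal{A}=\mathcal{A}_K$. Combined with the grading identity, the Kekul\'{e} structures having exactly $k$ proper sextets correspond precisely to the pairs with $\left|\mathcal{A}\right|=k$. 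For a fixed induced subposet $\mathcal{A}$ the number of strictly order-preserving maps $\mu:\mathcal{A}\to[\,n\,]$ is, by the definition of the strict order polynomial in~(\ref{eq:StrictOrderPoly}), equal to $\Omega_{\mathcal{A}}^{\circ}(n)$; summing over all induced subposets $\mathcal{A}\subset\mathcal{S}$ with $\left|\mathcal{A}\right|=k$ then gives $a(\boldsymbol{S},k)=\sum_{\mathcal{A}\subset\mathcal{S},\,\left|\mathcal{A}\right|=k}\Omega_{\mathcal{A}}^{\circ}(n)$, which is~(\ref{eq:aSk}); this holds for every $k\ge 0$, both sides being $0$ when no subposet of that size admits such a map.

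Finally I would substitute~(\ref{eq:aSk}) into~(\ref{eq:ZZz}) and collapse the double sum into a single sum over induced subposets,
\[
\text{ZZ}(\boldsymbol{S},x)=\sum_{k}a(\boldsymbol{S},k)(x+1)^{k}=\sum_{k}\Bigl(\sum_{\substack{\mathcal{A}\subset\mathcal{S}\\\left|\mathcal{A}\right|=k}}\Omega_{\mathcal{A}}^{\circ}(n)\Bigr)(x+1)^{k}=\sum_{\mathcal{A}\subset\mathcal{S}}\Omega_{\mathcal{A}}^{\circ}(n)(x+1)^{\left|\mathcal{A}\right|},
\]
whose right-hand side is precisely $\text{E}_{\mathcal{S}}^{\circ}(n,x+1)$ by~(\ref{eq:GenOrderPoly}) with $z=x+1$, establishing~(\ref{eq:ZZSx}). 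The one point needing care is the summation range: the sum in~(\ref{eq:ZZz}) runs from $k=0$ to the Clar number $Cl$, whereas~(\ref{eq:GenOrderPoly}) ranges over all induced subposets, i.e. up to $\left|\mathcal{A}\right|=\left|\mathcal{S}\right|$; the two agree because $a(\boldsymbol{S},k)=0$ once $k$ exceeds $Cl$ (no Kekul\'{e} structure carries more than $Cl$ proper sextets, a standard feature of the decomposition~(\ref{eq:ZZz})), so by~(\ref{eq:aSk}) the surplus terms $\sum_{\left|\mathcal{A}\right|=k}\Omega_{\mathcal{A}}^{\circ}(n)$ vanish for $Cl<k\le\left|\mathcal{S}\right|$. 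Granting the two Lemmata, the only genuinely conceptual step is the grading identity of the second paragraph---one must verify that the prescribed proper-sextet shape forces exactly one double interface bond per sextet and that the resulting correspondence is injective, which is exactly what prevents two hexagons sharing a single \soc{} from both being counted as proper sextets---while everything else is the formal manipulation shown, the true weight of the theorem resting in Lemmata~\ref{thm:Deconstruct_CC} and~\ref{thm:Construct_CC}.
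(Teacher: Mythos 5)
Your proposal is correct and follows essentially the same route as the paper's own proof: the graded bijection $K\mapsto(\mathcal{A}_K,\mu)$ supplied by Lemmata~\ref{thm:Deconstruct_CC} and~\ref{thm:Construct_CC}, counted via $\Omega_{\mathcal{A}}^{\circ}(n)$ to obtain Eq.~(\ref{eq:aSk}), then substituted into Eq.~(\ref{eq:ZZz}) and recognized as Eq.~(\ref{eq:GenOrderPoly}). The only difference is that you make explicit two points the paper treats as implicit---that the number of proper sextets of $K$ equals $\left|\mathcal{A}_K\right|$ (each proper sextet contains exactly one double interface bond, always on the same side of its hexagon), and that the summation ranges $k\le Cl$ and $\left|\mathcal{A}\right|\le\left|\mathcal{S}\right|$ agree because the surplus terms vanish---which is added care within the same argument, not a different approach.
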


\begin{proof}
For every $\mathcal{A}\subset\mathcal{S}$, Lemmata~\ref{thm:Deconstruct_CC}
and \ref{thm:Construct_CC} establish a one-to-one correspondence
between the sets $\mathcal{M}=\left\{ \mu:\mathcal{A}\rightarrow\left[\,n\,\right]~\vert~\mu\text{ is strictly order-preserving}\right\} $
and \linebreak{}
$\mathcal{K}=\left\{ K~\vert~K\text{ is a Kekulé structure with \ensuremath{\mathcal{A}_{K}}=\ensuremath{\mathcal{A}}}\right\} $,
implying that $\left|\mathcal{M}\right|=\left|\mathcal{K}\right|$.
The cardinality of $\mathcal{M}$ is, by Eq.~(\ref{eq:StrictOrderPoly}),
given by $\Omega_{\mathcal{A}}^{\circ}(n)$. In other words, there
are $\Omega_{\mathcal{A}}^{\circ}(n)$ Kekulé structures $K$ with
$\mathcal{A}_{K}=\mathcal{A}$, which directly proves Eq.~(\ref{eq:aSk}).
Then, by Eqs.~(\ref{eq:ZZz}) and (\ref{eq:GenOrderPoly}), we have
\begin{eqnarray}
\text{ZZ}(\boldsymbol{S},x) & = & \sum_{k=0}^{Cl}a(\boldsymbol{S},k)(x+1)^{k}\label{eq:ZZz-1}\\
 & = & \sum_{k=0}^{Cl}\sum_{\substack{\mathcal{A}\subset\mathcal{S}\\
\text{\ensuremath{\left|\mathcal{A}\right|}}=k
}
}\Omega_{\mathcal{A}}^{\circ}(n)(x+1)^{k}\nonumber \\
 & = & \sum_{\substack{\mathcal{A}\subset\mathcal{S}}
}\Omega_{\mathcal{A}}^{\circ}(n)(x+1)^{\text{\ensuremath{\left|\mathcal{A}\right|}}}\nonumber \\
 & = & \text{E}_{\mathcal{S}}^{\circ}(n,x+1).\nonumber 
\end{eqnarray}
\end{proof}

\section{Applications}

Practical application of Theorem\ \ref{thm:PolyEqui} to the determination
of ZZ polynomials of regular strips is presented in Parts 2 and 3
in this series of papers \cite{langner2021guide13,langner2021tables14},
where we give a practical guide to computation of the extended strict
order polynomials $\text{E}_{\mathcal{S}}^{\circ}(n,1+x)$ together
with a complete account of ZZ polynomials $\text{ZZ}(\boldsymbol{S},x)$
of regular $m$-tier benzenoid strips $\boldsymbol{S}$ with $m=1\text{--}6$
and an arbitrary value of $n$ determined as the extended strict order
polynomials $\text{E}_{\mathcal{S}}^{\circ}(n,1+x)$ of the corresponding
posets $\mathcal{S}$. It would be inconvenient to present this collection
of results here owing to its somewhat bulky volume. However, in order
to foreshadow the forthcoming results, we illustrate very briefly
the process of determination of $\text{E}_{\mathcal{S}}^{\circ}(n,1+x)$
using Eq.~(\ref{eq:Znz}) for the two families of benzenoids, $M(2,n)$
and $O(3,2,n)$, for which we constructed the corresponding posets
$\mathcal{S}$ in Examples~\ref{exa:M32poset} and~\ref{exa:O332poset}.
(Note that the poset $\mathcal{S}$ is independent of the structural
parameter $n$, allowing us to compute $\text{E}_{\mathcal{S}}^{\circ}(n,1+x)$
for the whole families of structures simultaneously.) 
\begin{itemize}
\item For $M(2,n)$, the poset $\mathcal{S}$ with $p=2$ vertices in Fig.~\ref{fig2KofM32}
allows only a single linear extension, $\mathcal{L}(\mathcal{S})$=$\left\{ 12\right\} $.
The number of descents and the number of fixed elements in this extension
are both zero, $\text{des}(12)=0$ and $\text{fix}_{\mathcal{S}}(12)=0$.
Consequently, the ZZ polynomial of $M(2,n)$ is given by
\begin{eqnarray*}
\text{ZZ}(M(2,n),x)=\text{E}_{\mathcal{S}}^{\circ}(n,1+x) & = & \sum_{k=0}^{p}\binom{p-\text{fix}_{\mathcal{S}}(12)}{k-\text{fix}_{\mathcal{S}}(12)}\binom{n+\text{des}(12)}{k}\left(1+x\right)^{k}\\
 & = & \sum_{k=0}^{2}\binom{2-0}{k-0}\binom{n+0}{k}\left(1+x\right)^{k}\\
 & = & \sum_{k=0}^{2}\binom{2}{k}\binom{n}{k}\left(1+x\right)^{k}
\end{eqnarray*}
in agreement with, for example, Eq.~(4) of \cite{chou2014closedtextendashform}.
\item For $O(3,2,n)$, the poset $\mathcal{S}$ with $p=6$ vertices in
Fig.~\ref{fig2KofO332} allows five linear extensions, $\mathcal{L}(\mathcal{S})$=$\{123456,$
$123546,132456,135246,132546$\}. The numbers of descents for these
extensions are $0$, $1$, $1$, $1$, and $2$, respectively, and
the numbers of fixed elements are $0$, $2$, $2$, $2$, and $4$,
respectively. Consequently, the ZZ polynomial of $O(3,2,n)$ is given
by
\begin{eqnarray*}
\text{ZZ}(O(3,2,n),x)=\text{E}_{\mathcal{S}}^{\circ}(n,1+x) & \negmedspace\negmedspace=\negmedspace\negmedspace & \sum\limits _{k=0}^{6}\left({\textstyle \binom{6}{k}}{\textstyle \binom{n}{k}}+{\scriptstyle 3}{\textstyle \binom{4}{k-2}}{\textstyle \binom{n+1}{k}}+{\textstyle \binom{2}{k-4}}{\textstyle \binom{n+2}{k}}\right)\left(1+x\right)^{k}
\end{eqnarray*}
in agreement with, for example, Eq.~(25b) of \cite{witek2015zhangzhang}.
\end{itemize}
\noindent Further examples with more details were given previously
in Examples~4 and~5 and Section~5.1 of \cite{langner2020sheep8}. 

\section{Conclusions}

We have demonstrated that for any Kekuléan regular $m$-tier strip
$\boldsymbol{S}$ of length $n$, its Zhang-Zhang polynomial $\text{ZZ}(\boldsymbol{S},x)$
can be computed as the extended strict order polynomial $\text{E}_{\mathcal{S}}^{\circ}(n,x+1)$
\cite{langner2020sheep8} from the poset $\mathcal{S}$ associated
with $\boldsymbol{S}$. The equivalence between $\text{ZZ}(\boldsymbol{S},x)$
and $\text{E}_{\mathcal{S}}^{\circ}(n,x+1)$ given by Theorem~\ref{thm:PolyEqui}
exists owing to the one-to-one correspondence between the set $\left\{ K\right\} $
of Kekulé structures of $\boldsymbol{S}$ and the set $\left\{ \mu:\mathcal{S}\supset\mathcal{A}\rightarrow\left[\,n\,\right]\right\} $
of strictly order-preserving maps from the induced subposets of $\mathcal{S}$
to the interval $\left[\thinspace n\thinspace\right]$ established
by the complementary Lemmata~\ref{thm:Deconstruct_CC} and~\ref{thm:Construct_CC}.
The determination of the poset $\mathcal{S}$ is straightforward and
can be performed directly from the geometrical parameters of $\boldsymbol{S}$
for any Kekuléan regular strip; for non-Kekuléan strips, there is
no poset associated with them, and consequently their ZZ polynomials
are identically equal to 0. Owing to the fact that $\text{E}_{\mathcal{S}}^{\circ}(n,x+1)$
of a $p$-element poset $\mathcal{S}$ can be written in a compact
form as
\begin{equation}
\text{E}_{\mathcal{S}}^{\circ}(n,x+1)=\sum_{w\in\mathcal{L}(\mathcal{S})}\sum_{k=0}^{p}\binom{p-\text{fix}_{\mathcal{S}}(w)}{k-\text{fix}_{\mathcal{S}}(w)}\binom{n+\text{des}(w)}{k}\left(1+x\right)^{k},\label{eq:Esformula}
\end{equation}
the process of determination of $\text{ZZ}(\boldsymbol{S},x)$ can
be completely automatized for any regular $m$-tier strip $\boldsymbol{S}$.
The corresponding algorithm, whose details are elaborated in Part
2 of the current series of papers \cite{langner2021guide13} (see
also \cite{langner2020sheep8} for mathematical details), can be summarized
by the following steps: $\left(i\right)$~Construct the poset $\mathcal{S}$
corresponding to $\boldsymbol{S}$. $\left(ii\right)$~Construct
the set $\mathcal{L}(\mathcal{S})$ of linear extensions of $\mathcal{S}$.
$\left(iii\right)$~For each linear extension $w\in\mathcal{L}(\mathcal{S})$,
compute $\text{des}(w)$ and $\text{fix}_{\mathcal{S}}(w)$. $\left(iv\right)$
Compute the sum in Eq.~(\ref{eq:Esformula}). An associated algorithm
for generating the complete set of Clar covers of $\boldsymbol{S}$
could proceed as follows:~$\left(1\right)$~Construct the poset
$\mathcal{S}$ corresponding to $\boldsymbol{S}$. $\left(2\right)$~Construct
all induced subposets $\mathcal{A}\subset\mathcal{S}$. $\left(3\right)$~For
each induced subposet $\mathcal{A}$, construct a set $\left\{ v\right\} $
of its linear extensions. $\left(4\right)$~For each linear extension
$v$ of $\mathcal{A}$, construct $2^{\left|\mathcal{A}\right|}\thinspace\binom{n+\text{des}(v)}{\left|\mathcal{A}\right|}$
Clar covers by selecting numbers $1\leq k_{1}<k_{2}<\ldots<k_{\left|\mathcal{A}\right|}\leq n+\text{des}(v)$
and assigning to each of the positions $k_{i}$ the covering character
\kh~or \ah. The complete account of ZZ polynomials of regular $m$-tier
benzenoid strips $\boldsymbol{S}$ with $m=1\text{--}6$ and an arbitrary
value of $n$ using the corresponding posets $\mathcal{S}$ and Eq.~(\ref{eq:Esformula})
is presented in Part 3 of the current series of papers \cite{langner2021tables14}.

Summarizing the development presented in the current paper, we want
to stress that the path pursued by us here is unprecedented in the
existing literature on chemical graph theory. We are aware that the
presented results rely heavily on quite advanced concepts in poset
theory and might be difficult to be fully grasped and appreciated
in the first reading. However, in our personal opinion the quite revolutionary
character of our findings deserves particular attention of the community
and should not be overlooked.

\section*{Appendix}

\renewcommand{\thesection}{A}
\setcounter{subsection}{0}

\subsection{\label{subsec:Kek2map}Proof of Lemma~\ref{thm:Deconstruct_CC}}

During this subsection, let $\boldsymbol{S}$ be a regular $m$-tier
strip of length $n$ with the poset of \soc s $\mathcal{S}$ and
a Kekulé structure $K$. In order to prove Lemma~\ref{thm:Deconstruct_CC},
we have to demonstrate that there is exactly one strictly order-preserving
map $\mu:\mathcal{A}_{K}\rightarrow\left[\,n\,\right]$ that satisfies
the two conditions
\begin{enumerate}
\item $\text{pos}_{K}\left(s_{k,j}\right)=\mu\left(s_{k,j}\right)+j$ for
every $s_{k,j}\in\mathcal{A}_{K}$ and
\item $\text{pos}_{K}\left(s_{k,j}\right)=\text{max}\left(\left\{ \mu\left(s_{k',j'}\right)\,\vert\,s_{k',j'}\in\mathcal{A}_{K},s_{k',j'}<_{\mathcal{S}}s_{k,j}\right\} \cup\left\{ 0\right\} \right)+j\text{ for every }s_{k,j}\in\mathcal{S}\setminus\mathcal{A}_{K}.$
\end{enumerate}
\noindent We will show this by explicitly constructing such a map\textemdash first
on the domain $\mathcal{S}$ and later restricted to the domain $\mathcal{A}_{K}$\textemdash and
then by demonstrating its uniqueness.
\begin{defn}
Let $\mu_{K}$ be the map which assigns to each element of $\mathcal{S}$
a number depending on its position in $K$ as follows.
\[
\mu_{K}:\mathcal{S}\rightarrow\mathbb{N},s_{k,j}\mapsto\text{pos}_{K}\left(s_{k,j}\right)-j.
\]
For a given Kekulé structure $K$, we will during this subsection
often denote the number $\mu_{K}(s_{k,j})$ by $m_{k,j}^{K}$, or,
since the considered Kekulé structure is clear, simply by $m_{k,j}$.
\end{defn}

The following lemmata will show that $\mu_{K}$ is an order-preserving
map $\mathcal{S}\rightarrow[\,0,n\,]$, and that the restriction $\mu_{K}\vert_{\mathcal{A}_{K}}$
is a strictly order-preserving map $\mathcal{A}_{K}\rightarrow[\,n\,]$
which is unique for every Kekulé structure.
\begin{lem}
\label{lem:bondsbetween}Consider two \soc s~$s_{k,j},s_{k',j'}\in\mathcal{S}$
in a fragment $f$ with $s_{k',j'}\lessdot_{\mathcal{S}}s_{k,j}$.
Then, $\tau_{K}(s_{k',j'})$ is located to the left of $\tau_{K}(s_{k,j})$,
and the number of interface bonds in $f$ that are located between
$\tau_{K}(s_{k',j'})$ and $\tau_{K}(s_{k,j})$ is given by
\begin{equation}
b_{K}(s_{k',j'},s_{k,j})=2\left(m_{k,j}-m_{k'j'}\right).\label{eq:nrbetween}
\end{equation}
\end{lem}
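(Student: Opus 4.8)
The plan is to translate the purely combinatorial quantity $b_K$ into a difference of \emph{horizontal} positions of the two \soc s inside the fragment $f$, and then to evaluate that difference through the bookkeeping identity $\text{pos}_K(s_{k,j})=m_{k,j}+j$. First I would dispose of the qualitative assertions: since $s_{k',j'}\lessdot_{\mathcal{S}}s_{k,j}$, Lemma~\ref{<SK} gives $\tau_K(s_{k',j'})\lessdot_K\tau_K(s_{k,j})$, whereupon Theorem~\ref{thm:zigzagsoc} shows that $\tau_K(s_{k',j'})$ lies to the left of $\tau_K(s_{k,j})$ and that every interface bond of $f$ strictly between them is single. Thus $b_K(s_{k',j'},s_{k,j})$ is exactly the number of interface bonds of $f$ lying horizontally between the two \soc s.

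Next I would introduce a left-to-right coordinate on the interface bonds of $f$. These are precisely the bonds of its upper interface $i_u$ together with those of its lower interface $i_l$, and along the zigzag boundary of the row they alternate strictly between $i_u$ and $i_l$; labelling them $1,2,3,\ldots$ from the left, each integer label carries exactly one bond, so with $x_{k,j},x_{k',j'}$ the labels of $\tau_K(s_{k,j}),\tau_K(s_{k',j'})$ one has $b_K=x_{k,j}-x_{k',j'}-1$. The same alternation fixes the label of the $p$-th bond of either interface: the interface containing the leftmost bond of $f$ occupies the odd labels (its $p$-th bond at $2p-1$), the other the even labels (its $q$-th bond at $2q$). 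Setting $\epsilon_k=1$ if $i_k$ is the odd-labelled interface and $\epsilon_k=0$ otherwise, and recalling $\tau_K(s_{k,j})=e_{k,\text{pos}_K(s_{k,j})}$, this reads $x_{k,j}=2\,\text{pos}_K(s_{k,j})-\epsilon_k$.

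Finally I would substitute $\text{pos}_K(s_{k,j})=m_{k,j}+j$ to obtain
\[
x_{k,j}-x_{k',j'}=2\left(m_{k,j}-m_{k',j'}\right)+\left(2(j-j')-(\epsilon_k-\epsilon_{k'})\right),
\]
and verify that the parenthesised term equals $1$ in each of the two cases of the cover relation of Definition~\ref{def:PosetS}. If the first interface bond of $f_\kappa$ lies in $i_{k'}$, then $j=j'$ while $i_{k'}$ is odd-labelled and $i_k$ even-labelled, so the term is $-(0-1)=1$; if it lies in $i_k$, then $j=j'+1$ with $\epsilon_k=1,\epsilon_{k'}=0$, so the term is $2-1=1$. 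Hence $x_{k,j}-x_{k',j'}=2(m_{k,j}-m_{k',j'})+1$ and therefore $b_K=2(m_{k,j}-m_{k',j'})$, the positivity also re-confirming the left-of claim.

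The main obstacle will be the geometric alternation step together with the matching of the two index cases of Definition~\ref{def:PosetS} to the parity $\epsilon$: one must check, according to whether the first interface bond of $f_\kappa$ sits in $i_k$ or in $i_{k'}$ (equivalently, shape by shape), that the interface bearing the leftmost bond is exactly the odd-labelled one, and that this is compatible with the value of $j-j'$ forced by the cover relation via Definition~\ref{def:relK}. Once this bookkeeping is pinned down the remainder is routine substitution.
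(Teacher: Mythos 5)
Your proposal is correct and follows essentially the same route as the paper's proof: both dispose of the left-of claim via Lemma~\ref{<SK} and Theorem~\ref{thm:zigzagsoc}, then exploit the left-to-right alternation of a fragment's interface bonds (the sequence $e_{l,1},e_{r,1},e_{l,2},e_{r,2},\ldots$) and the identity $\text{pos}_{K}(s_{k,j})=m_{k,j}+j$, finishing with the same two-case analysis dictated by Definition~\ref{def:PosetS}. Your explicit global coordinate $x_{k,j}=2\,\text{pos}_{K}(s_{k,j})-\epsilon_{k}$ merely repackages the paper's direct count of terms in that alternating sequence, unifying its two cases into one formula check.
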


\begin{proof}
From $s_{k',j'}\lessdot_{\mathcal{S}}s_{k,j}$ it follows according
to Lemma~\ref{<SK} that $\tau_{K}(s_{k',j'})\lessdot_{K}\tau_{K}(s_{k,j})$,
and it is then clear from Theorem~\ref{thm:zigzagsoc} that $\tau_{K}(s_{k',j'})$
must be located to the left of $\tau_{K}(s_{k,j})$. Denote by $i_{l}$
the interface of $f$ which contains the first interface bond of $f$,
and by $i_{r}$ the other interface of $f$. Let further $p=\text{pos}_{K}\left(s_{k,j}\right)=m_{k,j}+j$
and $p'=\text{pos}_{K}\left(s_{k',j'}\right)=m_{k',j'}+j'$, meaning
that $\tau_{K}(s_{k,j})=e_{k,p}$ and $\tau_{K}(s_{k',j'})=e_{k',p'}$.
Following the definition of the bond names, the relevant interface
bonds in $f$ are $\ldots,e_{l,p'},e_{r,p'},e_{l,p'+1},e_{r,p'+1},\ldots,e_{l,p-1},e_{r,p-1},e_{l,p},e_{r,p},\ldots$.
There are two possibilities:
\begin{lyxlist}{00.00.0000}
\item [{$k=r$:}] According to Def.~\ref{def:PosetS}, this implies $j'=j$
and $k'=l$. Thus, the number of interface bonds between $e_{k'p'}\equiv e_{l,p'}$
and $e_{k,p}\equiv e_{r,p}$ is $2(p-p')=2(p-p'-j+j')=2\left(m_{k,j}-m_{k',j'}\right)$.
\item [{$k=l$:}] According to Def.~\ref{def:PosetS}, this implies $j'=j-1$
and $k'=r$. Thus, the number of interface bonds between $e_{k',p'}\equiv e_{r,p'}$
and $e_{k,p}\equiv e_{l,p}$ is $2(p-p'-1)=2(p-p'-j+j')=2\left(m_{k,j}-m_{k',j'}\right)$.
\end{lyxlist}
\noindent In either case, Eq.~(\ref{eq:nrbetween}) is true.
\end{proof}
\begin{lem}
\label{lem:orderpreserving}The map $\mu_{K}$ is order-preserving.
\end{lem}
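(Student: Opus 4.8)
The plan is to reduce the order-preserving property to the nonnegativity of a bond count, a fact already supplied by Lemma~\ref{lem:bondsbetween}. Recall that ``$\mu_{K}$ is order-preserving'' means that $s_{k',j'}<_{\mathcal{S}}s_{k,j}$ implies $\mu_{K}(s_{k',j'})\le\mu_{K}(s_{k,j})$, that is, $m_{k',j'}\le m_{k,j}$. Since by Def.~\ref{def:Transitive-closure} the order $<_{\mathcal{S}}$ is the transitive closure of the cover relation $\lessdot_{\mathcal{S}}$, it suffices to verify the inequality across a single cover relation and then to propagate it along chains.

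First I would treat the case of a cover relation $s_{k',j'}\lessdot_{\mathcal{S}}s_{k,j}$. The second bullet of Definition~\ref{def:PosetS} forces $\left|k-k'\right|=1$, so the two \soc s lie in adjacent interfaces and hence in a common fragment $f$; this is exactly the hypothesis under which Lemma~\ref{lem:bondsbetween} applies. That lemma gives that the number of interface bonds of $f$ between $\tau_{K}(s_{k',j'})$ and $\tau_{K}(s_{k,j})$ equals $b_{K}(s_{k',j'},s_{k,j})=2\left(m_{k,j}-m_{k',j'}\right)$. As this number is the cardinality of a set of edges, it is nonnegative, so dividing by $2$ yields $m_{k,j}-m_{k',j'}\ge 0$, i.e.\ $\mu_{K}(s_{k',j'})\le\mu_{K}(s_{k,j})$.

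Next I would handle an arbitrary order relation $s<_{\mathcal{S}}t$. By the definition of the transitive closure there is a finite chain of cover relations $s=u_{0}\lessdot_{\mathcal{S}}u_{1}\lessdot_{\mathcal{S}}\cdots\lessdot_{\mathcal{S}}u_{r}=t$. Applying the single-cover estimate just established to each consecutive pair and chaining the resulting inequalities gives $\mu_{K}(s)=\mu_{K}(u_{0})\le\mu_{K}(u_{1})\le\cdots\le\mu_{K}(u_{r})=\mu_{K}(t)$, which is precisely the order-preserving condition, completing the argument.

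There is essentially no hard obstacle here, since the technical content has been absorbed into Lemma~\ref{lem:bondsbetween}; the only point requiring care is the reduction to cover relations. One must note that Lemma~\ref{lem:bondsbetween} is phrased for a single cover relation internal to one fragment, whereas a chain witnessing $s<_{\mathcal{S}}t$ may traverse several fragments and interfaces. This causes no difficulty, because each individual cover in the chain is confined to one fragment, so the per-step inequality is always legitimate and weak monotonicity is preserved under composition.
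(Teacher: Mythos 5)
Your proposal is correct and follows essentially the same route as the paper's proof: establish $m_{k',j'}\le m_{k,j}$ for a single cover relation by invoking the nonnegativity of the bond count $b_{K}(s_{k',j'},s_{k,j})=2\left(m_{k,j}-m_{k',j'}\right)$ from Lemma~\ref{lem:bondsbetween}, then extend to arbitrary relations $s<_{\mathcal{S}}t$ by chaining the inequality along a sequence of cover relations, exactly as the paper does. The only cosmetic differences are that you justify membership in a common fragment directly from the adjacency condition $\left|k-k'\right|=1$ of Definition~\ref{def:PosetS} (which is its first, not second, condition) rather than via Lemma~\ref{<SK} as the paper does, which is an equally valid way to satisfy the hypothesis of Lemma~\ref{lem:bondsbetween}.
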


\begin{proof}
\noindent Consider two \soc s $s_{k,j}$ and $s_{k',j'}$ with $s_{k',j'}<_{\mathcal{S}}s_{k,j}$.
We have to show that the numbers $m_{k,j}$ and $m_{k',j'}$ satisfy
the condition
\begin{equation}
m_{k',j'}\leq m_{k,j}.\label{eq:nkj_leq}
\end{equation}
Assume first that $s_{k',j'}\lessdot_{\mathcal{S}}s_{k,j}$. Then,
according to Lemma~\ref{<SK} we have $\tau_{K}(s_{k',j'})\lessdot_{K}\tau_{K}(s_{k,j})$,
meaning that $\tau_{K}(s_{k',j'})$ and $\tau_{K}(s_{k,j})$ must
belong to the same fragment $f$. The number $b_{K}(s_{k',j'},s_{k,j})$
of interface bonds in $f$ between $\tau_{K}(s_{k',j'})$ and $\tau_{K}(s_{k,j})$
is, according to Lemma~\ref{lem:bondsbetween}, given by $b_{K}(s_{k',j'},s_{k,j})=2m_{k,j}-2m_{k',j'}$.
Since $b_{K}(s_{k',j'},s_{k,j})$ is non-negative, it is clear that
$m_{k',j'}\leq m_{k,j}$.

Assume now that $s_{k',j'}\nlessdot_{\mathcal{S}}s_{k,j}$. Then,
there must exist \soc s $s_{k_{q},j_{q}}$ such that $s_{k',j'}\equiv s_{k_{1},j_{1}}\lessdot_{\mathcal{S}}s_{k_{2},j_{2}}\lessdot_{\mathcal{S}}\ldots\lessdot_{\mathcal{S}}s_{k_{r},j_{r}}\equiv s_{k,j}$.
Since Eq.~(\ref{eq:nkj_leq}) is true for all cover relations, it
follows that $m_{k',j'}\leq m_{k_{2},j_{2}}\leq\ldots\le m_{k_{r-1},j_{r-1}}\le m_{k,j}$.
\end{proof}
\begin{lem}
\label{lem:codomain}The map $\mu_{K}$ has the codomain $[\,0,n\,]$.
\end{lem}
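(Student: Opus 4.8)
The plan is to establish the two inequalities $0\le m_{k,j}$ and $m_{k,j}\le n$ separately for every \soc~$s_{k,j}\in\mathcal{S}$, both by an elementary counting argument carried out entirely inside the single interface $i_{k}$. The crucial observation is that the claim involves only data internal to $i_{k}$: by Definition~\ref{def:posK} we have $m_{k,j}=\text{pos}_{K}(s_{k,j})-j$, where $\text{pos}_{K}(s_{k,j})$ is the index of the $j^{\text{th}}$ double interface bond $d_{k,j}^{K}=\tau_{K}(s_{k,j})$ among the $\left|i_{k}\right|=n+\text{ord}(i_{k})$ interface bonds $e_{k,1},\ldots,e_{k,n+\text{ord}(i_{k})}$ of $i_{k}$, numbered from left to right (the value $\left|i_{k}\right|=n+\text{ord}(i_{k})$ being supplied by Theorem~\ref{thm:1st rule-1} and Remark~\ref{rem:ordik}). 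Thus neither the partial order $<_{\mathcal{S}}$ nor the fragment structure is needed here; only the left-to-right enumeration of bonds within $i_{k}$ matters. Recall also that $s_{k,j}\in\mathcal{S}$ forces $1\le j\le\text{ord}(i_{k})$ by Definition~\ref{def:PosetS}.

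For the lower bound I would count from the left. The double interface bonds of $i_{k}$ occur, from left to right, as $d_{k,1}^{K},\ldots,d_{k,\text{ord}(i_{k})}^{K}$ with strictly increasing positions, so exactly $j$ of the first $\text{pos}_{K}(s_{k,j})$ interface bonds of $i_{k}$ are double bonds, namely $d_{k,1}^{K},\ldots,d_{k,j}^{K}$. A block of $\text{pos}_{K}(s_{k,j})$ consecutive positions cannot contain more double bonds than it has entries, so $\text{pos}_{K}(s_{k,j})\ge j$ and therefore $m_{k,j}=\text{pos}_{K}(s_{k,j})-j\ge 0$.

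For the upper bound I would count from the right. The interface bonds occupying positions $\text{pos}_{K}(s_{k,j}),\ldots,n+\text{ord}(i_{k})$ contain exactly the $\text{ord}(i_{k})-j+1$ double bonds $d_{k,j}^{K},\ldots,d_{k,\text{ord}(i_{k})}^{K}$. Since this block has $n+\text{ord}(i_{k})-\text{pos}_{K}(s_{k,j})+1$ entries and must accommodate all $\text{ord}(i_{k})-j+1$ of these double bonds, we obtain $n+\text{ord}(i_{k})-\text{pos}_{K}(s_{k,j})+1\ge\text{ord}(i_{k})-j+1$, which simplifies to $\text{pos}_{K}(s_{k,j})\le n+j$ and hence $m_{k,j}\le n$. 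Combining the two bounds yields $m_{k,j}\in[\,0,n\,]$ for every $s_{k,j}\in\mathcal{S}$, as required.

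I do not expect any real obstacle: the whole argument is combinatorial bookkeeping within one interface, with no appeal to the poset relations or to Lemma~\ref{lem:orderpreserving}. The only point demanding a little care is the off-by-one accounting in the two counts, together with the observation that $1\le j\le\text{ord}(i_{k})$ guarantees $\text{ord}(i_{k})-j+1\ge 1$, so that the ``count from the right'' is well posed; everything else is immediate once the identity $\left|i_{k}\right|=n+\text{ord}(i_{k})$ is invoked.
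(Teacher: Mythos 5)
Your proof is correct and follows essentially the same route as the paper's: both bounds come from counting the double interface bonds of $i_{k}$ to the left and to the right of $\tau_{K}(s_{k,j})$ and invoking $\left|i_{k}\right|=n+\text{ord}(i_{k})$ from Theorem~\ref{thm:1st rule-1}. The only cosmetic difference is that you count blocks inclusive of the bond itself ($j$ doubles among the first $p$ positions) while the paper counts strictly to either side ($j-1$ and $\text{ord}(i_{k})-j$), which yields identical inequalities.
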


\begin{proof}
Consider a \soc~$s_{k,j}\in\mathcal{S}$. We have to show that its
image $\mu_{K}(s_{k,j})$ satisfies $0\leq\mu_{K}(s_{k,j})\leq n.$
Assume $\tau_{K}(s_{k,j})=e_{k,p}$. In $i_{k}$, to the left of $\tau_{K}(s_{k,j})$,
there are $j-1$ distinct double interface bonds $\tau_{K}(s_{k,1}),\ldots,\tau_{K}(s_{k,j-1})$.
In other words, in $i_{k}$ there are at least $j-1$ bonds to the
left of $e_{k,p}$, meaning $p\geq j$ and thus $\mu_{K}(s_{k,j})=p-j\geq0$
. To the right of $\tau_{K}(s_{k,j})$, in $i_{k}$, there are $\text{ord}(i_{k})-j$
double interface bonds $\tau_{K}(s_{k,j+1}),\ldots,\tau_{K}(s_{k,\text{ord}(i_{k})})$.
With Theorem~\ref{thm:1st rule-1}, it follows that $p\leq\left|i_{k}\right|-(\text{ord}(i_{k})-j)=n+j$
and thus $\mu_{K}(s_{k,j})=p-j\leq n$.
\end{proof}
\begin{lem}
\label{lem:nonproperEq}Let $s_{k,j}\in\mathcal{S}$. The following
statements are equivalent:
\begin{lyxlist}{00.00.0000}
\item [{\emph{$(i)$}}] $m_{k,j}=\text{max}\left(\left\{ m_{k',j'}\,\vert\,s_{k',j'}\in\mathcal{A}_{K},s_{k',j'}<_{\mathcal{S}}s_{k,j}\right\} \cup\left\{ 0\right\} \right)$
\item [{\emph{$(i')$}}] $m_{k,j}=\text{max}\left(\left\{ m_{k',j'}\,\vert\,s_{k',j'}\in\mathcal{S},s_{k',j'}<_{\mathcal{S}}s_{k,j}\right\} \cup\left\{ 0\right\} \right)$
\item [{\emph{$(ii)$}}] $s_{k,j}\in\mathcal{S}\setminus\mathcal{A}_{K}$.
\end{lyxlist}
\end{lem}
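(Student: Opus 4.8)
The plan is to prove the cycle of implications $(i)\Rightarrow(i')\Rightarrow(ii)\Rightarrow(i)$, reducing each statement to a concrete condition on the integers $m_{k,j}=\mu_{K}(s_{k,j})$ and on the covering characters of the interface bonds near $\tau_{K}(s_{k,j})=e_{k,p}$. The implication $(i)\Rightarrow(i')$ is immediate: since $\mathcal{A}_{K}\subseteq\mathcal{S}$, the maximum in $(i)$ is at most the maximum in $(i')$, and both are bounded above by $m_{k,j}$ because $\mu_{K}$ is order-preserving (Lemma~\ref{lem:orderpreserving}) and non-negative (Lemma~\ref{lem:codomain}); assuming $(i)$ forces all three quantities to coincide, which is exactly $(i')$.

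The substantive part is $(i')\Leftrightarrow(ii)$, and I would begin by rewriting $\neg(i')$. Since $\mu_{K}$ is order-preserving, every $<_{\mathcal{S}}$-predecessor of $s_{k,j}$ is $\le_{\mathcal{S}}$ a cover-predecessor of at least as large an $m$-value, so the maximum in $(i')$ is attained among the (at most two) cover-predecessors of $s_{k,j}$, one lying in $f_{k}$ and one in $f_{k+1}$. Hence $\neg(i')$ is equivalent to: $m_{k,j}>0$ and every cover-predecessor $s_{k',j'}\lessdot_{\mathcal{S}}s_{k,j}$ satisfies $m_{k',j'}<m_{k,j}$. By Lemma~\ref{lem:bondsbetween} the number of interface bonds separating $\tau_{K}(s_{k',j'})$ from $\tau_{K}(s_{k,j})$ equals $2(m_{k,j}-m_{k',j'})$, an even number; so $m_{k',j'}<m_{k,j}$ is the same as a gap of at least $2$, meaning that the two interface bonds immediately to the left of $e_{k,p}$ inside that fragment are both single (they lie between consecutive \soc s, hence are single by Theorem~\ref{thm:zigzagsoc}), whereas a gap of $0$ would place a double interface bond immediately to the left.

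The geometric heart is to match this with participation in a proper sextet. Because the two vertical edges of a hexagon are opposite, any alternating triple of edges (a sextet) contains exactly one of them; thus $e_{k,p}$ participates in a proper sextet precisely when the hexagon on the side prescribed by Fig.~\ref{fig:propersextet} realizes the proper pattern, whose other two double bonds are the two slanted edges adjacent to $e_{k,p}$ on that side. Applying the Kekul\'e condition ``exactly one incident edge is double'' at the two endpoints of $e_{k,p}$ and at the apex of that hexagon, and using that the double bond $e_{k,p}$ forces its own incident slants to be single, I would show that the two slants are double if and only if the nearest interface bonds of $i_{k-1}$ and $i_{k+1}$ lying to the left of $e_{k,p}$ within $f_{k}$ and $f_{k+1}$ are single, and that the neighbouring vertical bond $e_{k,p-1}$ is single as well. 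Reading these three single-bond conditions through the previous paragraph, the two slant conditions are exactly the gap-$\ge 2$ conditions on the two cover-predecessors, while $e_{k,p-1}$ being single, together with one gap condition, is equivalent to $m_{k,j}>0$ (if $e_{k,p-1}$ were double, the alternation of Theorem~\ref{thm:2nd rule-1} would force a double interface bond immediately to the left inside the fragment, i.e.\ a gap of $0$). This gives $s_{k,j}\in\mathcal{A}_{K}\Leftrightarrow\neg(i')$, that is, $(ii)\Leftrightarrow(i')$.

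Finally $(i')\Rightarrow(i)$ follows by a descent argument once $(i')\Leftrightarrow(ii)$ is available: if $m_{k,j}=0$ both maxima are $0$; if $m_{k,j}>0$, some $<_{\mathcal{S}}$-predecessor attains $m=m_{k,j}$, and whenever such an attaining predecessor lies in $\mathcal{S}\setminus\mathcal{A}_{K}$ it satisfies $(i')$, hence has a further predecessor of the same positive $m$-value; as $\mathcal{S}$ is finite and $<_{\mathcal{S}}$ is a strict order, the descent terminates at an element of $\mathcal{A}_{K}$ with $m$-value $m_{k,j}$, which yields $(i)$. I expect the third paragraph to be the main obstacle: fixing unambiguously which neighbouring hexagon and which pair of slants the definition of a proper sextet selects, carrying out the vertex-by-vertex Kekul\'e bookkeeping, and handling the boundary cases in which a cover-predecessor is absent or $e_{k,p}$ is the leftmost bond of its interface.
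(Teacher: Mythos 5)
Your proposal is correct and follows essentially the same route as the paper's proof: the core equivalence $(i')\Leftrightarrow(ii)$ rests, as in the paper, on Lemma~\ref{lem:bondsbetween} plus a local analysis of the covering around the hexagon immediately to the left of $\tau_{K}(s_{k,j})$ (your vertex-by-vertex Kekul\'e bookkeeping is just an unpacked version of the paper's five-coverings figure), and your terminating-descent argument for $(i')\Rightarrow(i)$ is the same argument the paper phrases by selecting a minimal element of the set of predecessors attaining $m_{k,j}$ and applying the already-established equivalence to it. The boundary cases you flag as the main obstacle (leftmost bond of an interface, absent cover-predecessor) are exactly the ones the paper dispatches separately at the outset, where $\tau_{K}(s_{k,j})=e_{k,1}$ forces $m_{k,j}=0$ and all three statements hold simultaneously.
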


\begin{proof}
Assume first that $\tau_{K}(s_{k,j})=e_{k,1}$, which implies $m_{k,j}+j=\text{pos}_{K}(s_{k,j})=1$.
There is no space to the left of $e_{k,1}$ that could accommodate
a proper sextet which has $\tau_{K}(s_{k,j})$ as its double interface
bond. Therefore, statement $(ii)$ is true. Simultaneously, due to
the naming convention of the \soc s, we have $j=1$ and thus $m_{k,j}=1-j=0$.
Since $\mu_{K}$ is order-preserving and maps to $[\,0,n\,]$, it
is clear that $0\leq\text{max}\left\{ m_{k',j'}\,\vert\,s_{k',j'}\in\mathcal{S},s_{k',j'}<_{\mathcal{S}}s_{k,j}\right\} \leq m_{k,j}=0$,
which ensures that the right-hand sides of statements~$(i')$ and
$(i)$ are also equal to zero. Therefore, statements~$(i)$, $(i')$
and $(ii)$ are simultaneously true.

Assume now that $\tau_{K}(s_{k,j})=e_{k,p}$ with $p>1$. This implies
that there is a hexagon~\textcolor{blue}{$H$} of $\boldsymbol{S}$
to the left of $\tau_{K}(s_{k,j})$, as shown on the left side of
Fig.~\ref{fig:Hexa_all_coverings}. It is easy to verify that in
any Kekulé structure, there are only five possible coverings of the
bonds in and around \textcolor{blue}{$H$}, which are depicted on
the right side of Fig.~\ref{fig:Hexa_all_coverings}.
\begin{figure}[H]
\noindent \begin{centering}
\includegraphics[scale=0.32]{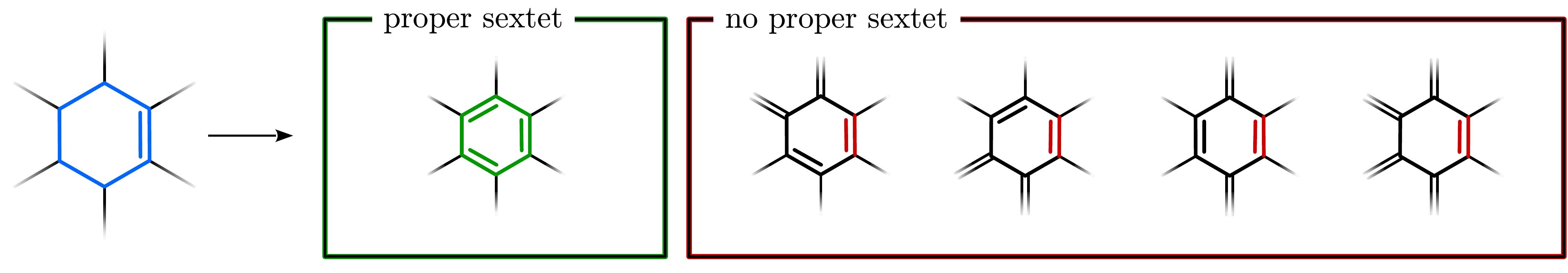}
\par\end{centering}
\caption{\label{fig:Hexa_all_coverings}All possible coverings of the bonds
in and around a hexagon \textcolor{blue}{$H$} directly to the left
of a double bond $s_{k,j}$.}
\end{figure}
Only in one of the five possible cases we find that the covering of
\textcolor{blue}{$H$} forms a proper sextet. This is the case exactly
if none of the interface bonds connected to the top and bottom corner
of \textcolor{blue}{$H$} are double bonds. In the other four cases,
at least one these bonds is a double bond, and $\tau_{K}(s_{k,j})$
is not part of a proper sextet, meaning that $s_{k,j}\in\mathcal{S}\setminus\mathcal{A}_{K}$.
As a result, the following statements are equivalent.
\begin{lyxlist}{00.00.0000}
\item [{$(ii)$}] $s_{k,j}\in\mathcal{S}\setminus\mathcal{A}_{K}$.
\item [{\,~\raisebox{-0.35\height}{\rotatebox{90}{$\Longleftrightarrow$}}}] (according
to Fig.~\ref{fig:Hexa_all_coverings})
\item [{$(b)$}] There is a double bond connected to the top or bottom
corner of \textcolor{blue}{$H$}.
\item [{~\,\raisebox{-0.35\height}{\rotatebox{90}{$\Longleftrightarrow$}}}]~
\item [{$(c)$}] At least one of the following is true:
\begin{lyxlist}{00.00.0000}
\item [{$(c')$}] In the fragment $f_{k}$, there is a \soc~$s_{k-1,j'}\lessdot_{\mathcal{S}}s_{k,j}$
such that there are no interface bonds of $f_{k}$ between $\tau_{K}(s_{k-1,j'})$
and $\tau_{K}(s_{k,j})$.
\item [{$(c'')$}] In the fragment $f_{k+1}$, there is a \soc~$s_{k+1,j''}\lessdot_{\mathcal{S}}s_{k,j}$
such that there are no interface bonds of $f_{k+1}$ between $\tau_{K}(s_{k+1,j''})$
and $\tau_{K}(s_{k,j})$.
\end{lyxlist}
\item [{\,~\raisebox{-0.35\height}{\rotatebox{90}{$\Longleftrightarrow$}}}] (by
Lemma~\ref{lem:bondsbetween})
\item [{$(d)$}] At least one of the following is true:
\begin{lyxlist}{00.00.0000}
\item [{$(d')$}] In the fragment $f_{k}$, there is a \soc~$s_{k-1,j'}\lessdot_{\mathcal{S}}s_{k,j}$
such that $m_{k-1,j'}=m_{k,j}$.
\item [{$(d'')$}] In the fragment $f_{k+1}$, there is a \soc~$s_{k+1,j''}\lessdot_{\mathcal{S}}s_{k,j}$
such that $m_{k+1,j''}=m_{k,j}$.
\end{lyxlist}
\item [{~\,\raisebox{-0.35\height}{\rotatebox{90}{$\Longleftrightarrow$}}}] (since
$0\leq m_{k'j'}\leq m_{kj}$ for all $s_{k',j'}\in\mathcal{S}$ with
$s_{k',j'}<_{\mathcal{S}}s_{k,j}$)
\item [{$(i')$}] $m_{k,j}=\text{max}\left\{ m_{k',j'}\,\vert\,s_{k',j'}\in\mathcal{S},s_{k',j'}<_{\mathcal{S}}s_{k,j}\right\} $
\\
$\phantom{m_{k,j}}=\text{max}\left(\left\{ m_{k',j'}\,\vert\,s_{k',j'}\in\mathcal{S},s_{k',j'}<_{\mathcal{S}}s_{k,j}\right\} \cup\left\{ 0\right\} \right)$.
\end{lyxlist}
It remains to be shown that statements $(i')$ and $(i)$ of the lemma
are equivalent. The implication $(i)\Rightarrow(i')$ is obvious from
the inclusion $\mathcal{\mathcal{A}}_{K}\subset\mathcal{S}$ together
with the fact that $\mu_{K}$ is order-preserving. We still have to
show $(i')\Rightarrow(i)$, i.e., that for the considered above element
$s_{k,j}\in\mathcal{S}\setminus\mathcal{A}_{K}$, we have 
\begin{equation}
m_{k,j}=\text{max}\left(\left\{ m_{k',j'}\,\vert\,s_{k',j'}\in\mathcal{\mathcal{A}}_{K},s_{k',j'}<_{\mathcal{S}}s_{k,j}\right\} \cup\left\{ 0\right\} \right).\label{eq:mkjAk}
\end{equation}
We distinguish two cases: $m_{k,j}=0$ and $m_{k,j}>0$. Keep in mind
that we know from Lemmata~\ref{lem:orderpreserving} and \ref{lem:codomain}
that 
\begin{equation}
m_{k,j}\geq\text{max}\left(\left\{ m_{k',j'}\,\vert\,s_{k',j'}\in\mathcal{\mathcal{A}}_{K},s_{k',j'}<_{\mathcal{S}}s_{k,j}\right\} \cup\left\{ 0\right\} \right).\label{eq:mkjgeq}
\end{equation}

In the case $m_{k,j}=0$, it follows from Eq.~(\ref{eq:mkjgeq})
that Eq.~(\ref{eq:mkjAk}) is trivially true.

In the case $m_{k,j}>0$, consider the set $M_{k,j}=\left\{ s_{k',j'}\in\mathcal{S}\,\vert\,m_{k',j'}=m_{k,j},s_{k',j'}<_{\mathcal{S}}s_{k,j}\right\} $.
Statement $(i')$ together with $m_{k,j}>0$ implies that $M_{k,j}$
is non-empty, and thus contains at least one minimal (w.r.t. $<_{\mathcal{S}}$)
element $s_{\tilde{k},\tilde{j}}$. We will show that $s_{\tilde{k},\tilde{j}}\in\mathcal{A}_{K}$.
Clearly, we have $m_{\tilde{k},\tilde{j}}=m_{k,j}>0$. Furthermore,
by choice of $s_{\tilde{k},\tilde{j}}$ as a minimal element of $M_{k,j}$,
for any $s_{k',j'}\in\mathcal{S}$ with $s_{k',j'}<_{\mathcal{S}}s_{\tilde{k},\tilde{j}}$,
we must have $m_{k',j'}<m_{\tilde{k},\tilde{j}}$. It follows that
$m_{\tilde{k},\tilde{j}}>\text{max}\left(\left\{ m_{k',j'}\,\vert\,s_{k',j'}\in\mathcal{S},s_{k',j'}<_{\mathcal{S}}s_{\tilde{k},\tilde{j}}\right\} \cup\left\{ 0\right\} \right)$,
which means, due to the equivalence of the statements~$(ii)$ and
$(i')$, that $s_{\tilde{k},\tilde{j}}\in\mathcal{A}_{K}$. We have
shown that there exists an element $s_{\tilde{k},\tilde{j}}\in\mathcal{A}_{K}$
with $s_{\tilde{k},\tilde{j}}<_{\mathcal{S}}s_{k,j}$ and $m_{\tilde{k},\tilde{j}}=m_{k,j}$;
thus it follows together with Eq.~(\ref{eq:mkjgeq}) that $m_{k,j}=\text{max}\left\{ m_{k',j'}\,\vert\,s_{k',j'}\in\mathcal{A}_{K},s_{k',j'}<_{\mathcal{S}}s_{k,j}\right\} $,
which shows that Eq.~(\ref{eq:mkjAk}) holds. Therefore, in either
case, it follows from statement $(i')$ that statement $(i)$ is true.
\end{proof}
\begin{lem}
\label{lem:strictn}The restriction of the map $\mu_{K}$ to the subposet
$\mathcal{A}_{K}\subset\mathcal{S}$ is a strictly order-preserving
map $\mu_{K}\vert_{\mathcal{A}_{K}}:\,\mathcal{A}_{K}\rightarrow[\,n\,]$.
\end{lem}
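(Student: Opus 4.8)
The plan is to establish the two assertions contained in the statement separately: first, that the restricted map lands in $[\,n\,]$ rather than merely in $[\,0,n\,]$, and second, that it is \emph{strictly} order-preserving. Both will follow almost immediately once the right strict inequality is extracted from the characterization of $\mathcal{A}_K$ supplied by Lemma~\ref{lem:nonproperEq}; essentially all the substantive combinatorial work has already been carried out there, so what remains is a careful bookkeeping argument.

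The central step will be to record that membership in $\mathcal{A}_K$ forces a \emph{strict} gap over all predecessors. Fix $s_{k,j}\in\mathcal{A}_K$. Then statement $(ii)$ of Lemma~\ref{lem:nonproperEq} fails, hence so does the equivalent statement $(i')$, i.e.
\[
m_{k,j}\neq\text{max}\left(\left\{ m_{k',j'}\,\vert\,s_{k',j'}\in\mathcal{S},\,s_{k',j'}<_{\mathcal{S}}s_{k,j}\right\} \cup\left\{ 0\right\} \right).
\]
On the other hand, Lemmata~\ref{lem:orderpreserving} and~\ref{lem:codomain} give $m_{k',j'}\leq m_{k,j}$ for every predecessor $s_{k',j'}<_{\mathcal{S}}s_{k,j}$ as well as $m_{k,j}\geq0$, so $m_{k,j}$ is already a \emph{weak} upper bound for the same maximized set. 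Combining this weak bound with the strict non-equality above will upgrade it to
\[
m_{k,j}>\text{max}\left(\left\{ m_{k',j'}\,\vert\,s_{k',j'}\in\mathcal{S},\,s_{k',j'}<_{\mathcal{S}}s_{k,j}\right\} \cup\left\{ 0\right\} \right).
\]

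From this single inequality both claims will drop out. For the codomain, the presence of $0$ in the maximized set yields $m_{k,j}>0$, and since the values $m_{k,j}$ are integers this forces $m_{k,j}\geq1$; together with $m_{k,j}\leq n$ from Lemma~\ref{lem:codomain} this gives $\mu_K(s_{k,j})\in[\,n\,]$. For strict order-preservation, I would take $s_{k',j'}<_{\mathcal{S}}s_{k,j}$ with both elements in $\mathcal{A}_K$; then $m_{k',j'}$ is one of the numbers appearing in the maximized set, whence $m_{k',j'}\leq\text{max}(\ldots)<m_{k,j}$, that is $\mu_K(s_{k',j'})<\mu_K(s_{k,j})$.

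I do not expect a genuine obstacle here, since the hard content lives in Lemma~\ref{lem:nonproperEq}. The only point requiring care is the logical step in the central paragraph—turning the \emph{failure} of the equality in $(i')$, together with the weak bound furnished by order-preservation, into a strict inequality—together with keeping track of the shift of codomain from $[\,0,n\,]$ for $\mu_K$ on all of $\mathcal{S}$ down to $[\,n\,]$ for its restriction to $\mathcal{A}_K$.
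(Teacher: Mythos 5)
Your proposal is correct and follows essentially the same route as the paper's own proof: both extract the strict inequality $m_{k,j}>\max\left(\left\{ m_{k',j'}\,\vert\,s_{k',j'}\in\mathcal{S},\,s_{k',j'}<_{\mathcal{S}}s_{k,j}\right\} \cup\left\{ 0\right\} \right)$ by combining the weak bound from Lemmata~\ref{lem:orderpreserving} and~\ref{lem:codomain} with the non-equality given by Lemma~\ref{lem:nonproperEq}, and then read off the codomain restriction to $[\,n\,]$ and the strict order-preservation from that single inequality. No gaps; the bookkeeping is exactly as in the paper.
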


\begin{proof}
Consider an element $s_{k,j}\in\mathcal{A}_{K}$. We know from the
order-preserving nature and the codomain of $\mu_{K}$ that $m_{k,j}\geq\text{max}\left(\left\{ m_{k',j'}\,\vert\,s_{k',j'}\in\mathcal{S},s_{k',j'}<_{\mathcal{S}}s_{k,j}\right\} \cup\left\{ 0\right\} \right)$
and from Lemma~\ref{lem:nonproperEq} that $m_{k,j}\neq\text{max}\left(\left\{ m_{k',j'}\,\vert\,s_{k',j'}\in\mathcal{S},s_{k',j'}<_{\mathcal{S}}s_{k,j}\right\} \cup\left\{ 0\right\} \right)$;
thus $m_{k,j}>\text{max}\left(\left\{ m_{k',j'}\,\vert\,s_{k',j'}\in\mathcal{S},s_{k',j'}<_{\mathcal{S}}s_{k,j}\right\} \cup\left\{ 0\right\} \right)$
is the only remaining possibility. This implies two facts: Firstly,
$m_{k,j}>0$, meaning that the codomain of $\mu_{K}\vert_{\mathcal{A}_{K}}$
does not contain zero and is therefore restricted to $[\,n\,]$. Secondly,
$m_{k,j}>m_{k',j'}$ for every $s_{k',j'}\in\mathcal{S}$ with $s_{k',j'}<_{\mathcal{S}}s_{k,j}$\textemdash and
thus in particular for every such $s_{k',j'}\in\mathcal{A}_{K}$.
Since $\mu_{K}(s_{k',j'})\equiv m_{k',j'}<m_{k,j}\equiv\mu_{K}(s_{k,j})$
for every $s_{k',j'}\in\mathcal{A}_{K}$ with $s_{k',j'}<_{\mathcal{S}}s_{k,j}$,
the restricted map $\mu_{K}\vert_{\mathcal{A}_{K}}$ is strictly order-preserving.
\end{proof}
\begin{lem}
\label{lem:muunique}Let $\mu$ be a strictly order-preserving map
$\mu:\mathcal{A}_{K}\rightarrow\left[\,n\,\right]$ which satisfies
conditions $1$ and $2$ of Lemma~\ref{thm:Deconstruct_CC}. Then,
$\mu=\mu_{K}\vert_{\mathcal{A}_{K}}$.
\end{lem}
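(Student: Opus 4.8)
The plan is to observe that uniqueness is forced immediately by condition~$1$ of Lemma~\ref{thm:Deconstruct_CC}, so essentially no substantive work remains. Suppose $\mu:\mathcal{A}_{K}\rightarrow[\,n\,]$ is any strictly order-preserving map satisfying conditions~$1$ and~$2$. Condition~$1$ asserts that $\text{pos}_{K}(s_{k,j})=\mu(s_{k,j})+j$ for every $s_{k,j}\in\mathcal{A}_{K}$. Here $\text{pos}_{K}(s_{k,j})$ is fixed by the Kekul\'e structure $K$ alone---it is the position of $\tau_{K}(s_{k,j})$ in the interface $i_{k}$, cf.\ Definition~\ref{def:posK}---and $j$ is the fixed second index of the \soc~$s_{k,j}$. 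Hence condition~$1$ can be solved explicitly for its only unknown, giving $\mu(s_{k,j})=\text{pos}_{K}(s_{k,j})-j$.

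The right-hand side is precisely the defining expression for $\mu_{K}(s_{k,j})$. Therefore $\mu(s_{k,j})=\mu_{K}(s_{k,j})$ for every $s_{k,j}\in\mathcal{A}_{K}$, which is exactly the assertion $\mu=\mu_{K}\vert_{\mathcal{A}_{K}}$. I anticipate no genuine obstacle in this lemma: because the unknown value $\mu(s_{k,j})$ occurs isolated and additively in condition~$1$, it is pinned down pointwise, with no recursion, induction, or appeal to the order structure required. In particular, condition~$2$ is not used in this direction of the argument; it is needed only for the existence half of Lemma~\ref{thm:Deconstruct_CC}, which is supplied by the preceding Lemmata~\ref{lem:orderpreserving}, \ref{lem:codomain}, \ref{lem:nonproperEq}, and~\ref{lem:strictn} establishing that $\mu_{K}\vert_{\mathcal{A}_{K}}$ is indeed a strictly order-preserving map into $[\,n\,]$ meeting both conditions. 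This lemma therefore completes the ``at most one'' part of the ``exactly one'' claim, and combined with that construction finishes the proof of Lemma~\ref{thm:Deconstruct_CC}.
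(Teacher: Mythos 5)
Your proposal is correct and follows essentially the same route as the paper: condition~$1$ alone pins down $\mu(s_{k,j})=\text{pos}_{K}(s_{k,j})-j=\mu_{K}(s_{k,j})$ pointwise on the common domain $\mathcal{A}_{K}$, which is exactly the paper's one-line argument. Your additional observation that condition~$2$ plays no role in the uniqueness direction is also consistent with the paper's proof, which likewise never invokes it.
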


\begin{proof}
The maps $\mu$ and $\mu_{K}\vert_{\mathcal{A}_{K}}$ have the same
domain~$\mathcal{A}_{K}$. For every $s_{k,j}\in\mathcal{A}_{K}$,
condition $1$ of Lemma~\ref{thm:Deconstruct_CC} fully determines
the value of $\mu\left(s_{k,j}\right)$ to be $\mu\left(s_{k,j}\right)=\text{pos}_{K}\left(s_{k,j}\right)-j=\mu_{K}\left(s_{k,j}\right)$.
\end{proof}
\begin{proof}
\emph{(of Lemma~\ref{thm:Deconstruct_CC})} The map $\mu_{K}\vert_{\mathcal{A}_{K}}$
is, according to Lemma~\ref{lem:strictn}, a strictly order-preserving
map $\mu_{K}\vert_{\mathcal{A}_{K}}:\,\mathcal{A}_{K}\rightarrow[\,n\,]$.
It satisfies condition~1 of Lemma~\ref{thm:Deconstruct_CC} by construction.
Furthermore, this map satisfies condition~2 of Lemma~\ref{thm:Deconstruct_CC}
according to Lemma~\ref{lem:nonproperEq}. Finally, Lemma~\ref{lem:muunique}
demonstrates that $\mu_{K}\vert_{\mathcal{A}_{K}}$ is the only strictly
order-preserving map $\mathcal{A}_{K}\rightarrow[\,n\,]$ which satisfies
both conditions.
\end{proof}

\subsection{Proof of Lemma~\ref{thm:Construct_CC}}

The purpose of the present subsection is to prove Lemma~\ref{thm:Construct_CC}.
During this subsection, let $\boldsymbol{S}$ be a Kekuléan regular
$m$-tier strip of length $n$ with the set of \soc s $\mathcal{S}$.
Let further $\mathcal{A}\subset\mathcal{S}$ be an induced subposet
of $\mathcal{S}$, and let $\mu:\mathcal{A}\rightarrow\left[\,n\,\right]$
be a strictly order-preserving map. Recall that $\mu_{K}\left(s_{k,j}\right)=\text{pos}_{K}\left(s_{k,j}\right)-j$
for every $s_{k,j}\in\mathcal{S}.$ We have to show that there exists
exactly one Kekulé structure $K$ of $\boldsymbol{S}$ such that
\begin{enumerate}
\item $\mu_{K}\left(s_{k,j}\right)=\mu\left(s_{k,j}\right)$ for every $s_{k,j}\in\mathcal{A}$,
\item $\mu_{K}\left(s_{k,j}\right)=\text{\emph{\ensuremath{\max}}}\left(\left\{ \mu\left(s_{k',j'}\right)\,\vert\,s_{k',j'}\in\mathcal{A},s_{k',j'}<_{\mathcal{S}}s_{k,j}\right\} \cup\left\{ 0\right\} \right)\text{ for every }s_{k,j}\in\mathcal{S}\setminus\mathcal{A}$,
\item $\mathcal{A}_{K}=\mathcal{A}$.
\end{enumerate}
The proof of Lemma~\ref{thm:Construct_CC} proceeds by explicitly
constructing the Kekulé structure in question. The following Lemma~\ref{lem:Uniqueness}
will be used to demonstrate the uniqueness of such a Kekulé structure.
\begin{lem}
\label{lem:Uniqueness}Consider two Kekulé structures $K$ and $K'$
of $\boldsymbol{S}$ with $\mathcal{A}_{K}=\mathcal{A}_{K'}$ and
$\mu_{K}\vert_{\mathcal{A}_{K}}=\mu_{K'}\vert_{\mathcal{A}_{K'}}$.
Then, $K=K'$.
\end{lem}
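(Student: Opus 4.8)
The plan is to reduce the statement to the equality of positions and then invoke Lemma~\ref{lem:Fulldet}. Recall that every Kekulé structure is completely determined by its set of double interface bonds, which can be written as $K_I=\{e_{k,\text{pos}_K(s_{k,j})}\mid s_{k,j}\in\mathcal{S}\}$. By Lemma~\ref{lem:Fulldet} it therefore suffices to prove that $\text{pos}_K(s_{k,j})=\text{pos}_{K'}(s_{k,j})$ for every $s_{k,j}\in\mathcal{S}$. Since $\text{pos}_K(s_{k,j})=\mu_K(s_{k,j})+j$ and the shift $+j$ is independent of the chosen Kekulé structure, this in turn reduces to showing that $\mu_K$ and $\mu_{K'}$ agree on the whole of $\mathcal{S}$, and not merely on the subposet $\mathcal{A}_K=\mathcal{A}_{K'}$ where they coincide by hypothesis.

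First I would dispose of the elements of $\mathcal{A}_K$. On this subposet the assumption $\mu_K\vert_{\mathcal{A}_K}=\mu_{K'}\vert_{\mathcal{A}_{K'}}$, together with $\mathcal{A}_K=\mathcal{A}_{K'}$, gives immediately $\mu_K(s_{k,j})=\mu_{K'}(s_{k,j})$ for every $s_{k,j}\in\mathcal{A}_K$.

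The essential step is to extend this equality to the elements of $\mathcal{S}\setminus\mathcal{A}_K$. For such an $s_{k,j}$ I would apply the equivalence $(ii)\Leftrightarrow(i)$ of Lemma~\ref{lem:nonproperEq}, which forces
\[
\mu_K(s_{k,j})=\max\bigl(\{\mu_K(s_{k',j'})\mid s_{k',j'}\in\mathcal{A}_K,\ s_{k',j'}<_{\mathcal{S}}s_{k,j}\}\cup\{0\}\bigr),
\]
and likewise for $K'$ with $\mathcal{A}_{K'}$ in place of $\mathcal{A}_K$. The decisive point is that the set over which this maximum is taken consists only of elements of $\mathcal{A}_K$; hence no recursion among the non-sextet elements intervenes. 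Because $\mathcal{A}_{K'}=\mathcal{A}_K$ and the two maps already agree on $\mathcal{A}_K$, the two right-hand sides are literally the same number, so $\mu_K(s_{k,j})=\mu_{K'}(s_{k,j})$.

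Putting the two cases together yields $\mu_K=\mu_{K'}$ on all of $\mathcal{S}$, hence $\text{pos}_K=\text{pos}_{K'}$ everywhere, hence $K_I=K_I'$, and Lemma~\ref{lem:Fulldet} delivers $K=K'$. I expect the only genuine subtlety to be the correct invocation of Lemma~\ref{lem:nonproperEq}: one must notice that its formula $(i)$ refers solely to $\mathcal{A}_K$, so that knowing $\mu_K$ on $\mathcal{A}_K$ already pins down its values on the complement; once this is observed the argument is purely formal and needs no further interface-theoretic input.
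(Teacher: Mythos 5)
Your proposal is correct and follows essentially the same route as the paper's own proof: agreement on $\mathcal{A}_{K}$ by hypothesis, extension to $\mathcal{S}\setminus\mathcal{A}_{K}$ via the equivalence $(ii)\Leftrightarrow(i)$ of Lemma~\ref{lem:nonproperEq} (whose maximum ranges only over $\mathcal{A}_{K}$, exactly as you emphasize), and then the conclusion $K_{I}=K_{I}'$ followed by Lemma~\ref{lem:Fulldet}. No gaps.
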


\begin{proof}
For every $s_{k,j}\in\mathcal{A}_{K}$, we know that $\mu_{K}(s_{k,j})=\mu_{K'}(s_{k,j})$.
For every $s_{k,j}\in\mathcal{S}\setminus\mathcal{A}_{K}$, we know
from the assumptions and Lemma~\ref{lem:nonproperEq} that 
\begin{eqnarray*}
\mu_{K}(s_{k,j}) & = & \text{max}\left(\left\{ \mu_{K\phantom{'}}(s_{k',j'})\,\vert\,s_{k',j'}\in\mathcal{A}_{K\phantom{'}},s_{k',j'}<_{\mathcal{S}}s_{k,j}\right\} \cup\left\{ 0\right\} \right)\\
 & = & \text{max}\left(\left\{ \mu_{K'}(s_{k',j'})\,\vert\,s_{k',j'}\in\mathcal{A}_{K'},s_{k',j'}<_{\mathcal{S}}s_{k,j}\right\} \cup\left\{ 0\right\} \right)\,\,\,\,=\,\,\,\,\mu_{K'}(s_{k,j}).
\end{eqnarray*}
 Therefore, we have $\mu_{K}(s_{k,j})=\mu_{K'}(s_{k,j})$ for every
$s_{k,j}\in\mathcal{S}$, which means that the double interface bonds
in $K$ and $K'$ are identical, and thus
\[
K_{I}=\left\{ e_{k,\mu_{K}\left(s_{k,j}\right)+j}\,\vert\,s_{k,j}\in\mathcal{S}\right\} =\left\{ e_{k,\mu_{K'}\left(s_{k,j}\right)+j}\,\vert\,s_{k,j}\in\mathcal{S}\right\} =K_{I}'.
\]
It follows from Lemma~\ref{lem:Fulldet} that $K=K'$.
\end{proof}
\begin{proof}
\emph{(of Lemma~\ref{thm:Construct_CC})} We will first construct
a set ${K_{v}}\subset\bigcup_{k}i_{k}$ of interface bonds of $\boldsymbol{S}$
that corresponds to $\mathcal{A}$ and $\mu$. Then, we show that
the third rule of interface theory given in Theorem~\ref{thm:Construct_CC-1}
can be applied to prove that there exists a Kekulé structure $K$
with $K_{I}={K_{v}}$.

Let us consider the auxiliary map $\tilde{\mu}:\mathcal{S\rightarrow}[\,0,n\,]$
constructed as follows.
\begin{equation}
\tilde{\mu}(s_{k,j})=\begin{cases}
\mu(s_{k,j}) & \text{if }s_{k,j}\in\mathcal{A}\\
\text{max}\left(\left\{ \mu(s_{k',j'})\,\vert\,s_{k',j'}\in\mathcal{A},s_{k',j'}<_{\mathcal{S}}s_{k,j}\right\} \cup\left\{ 0\right\} \right) & \text{if }s_{k,j}\in\mathcal{\mathcal{S}\setminus\mathcal{A}}
\end{cases}\label{eq:mutilde}
\end{equation}
During this subsection, let us for all $s_{k,j}\in\mathcal{\mathcal{S}}$
denote the value $\tilde{\mu}(s_{k,j})$ by $\tilde{m}_{k,j}$. Clearly,
the map $\tilde{\mu}:\mathcal{S}\rightarrow[\,0,n\,]$, $s_{k,j}\mapsto\tilde{m}_{k,j}$
is order-preserving. For every $s_{k,j}\in\mathcal{S}$, let $n_{k,j}=\tilde{m}_{k,j}+j$.

\emph{The numbers $n_{k,j}$ specify distinct, well-defined interface
bonds $e_{k,n_{k,j}}$:} For any given $k\in[\,m\,]$, the set $\left\{ s_{k',j}\in\mathcal{S}\,\vert\,k'=k\right\} $
contains, by construction, $\text{ord}(i_{k})$ elements $s_{k,1}<_{\mathcal{S}}s_{k,2}<_{\mathcal{S}}\ldots<_{\mathcal{S}}s_{k,\text{ord}(i_{k})}$.
Since $\tilde{\mu}$ is order-preserving and has the codomain $\tilde{\mu}:\mathcal{S}\rightarrow[\,0,n\,]$,
it follows that the corresponding numbers $\tilde{m}_{k,j}$ satisfy
$0\leq\tilde{m}_{k,1}\leq\tilde{m}_{k,2}\leq\ldots\leq\tilde{m}_{k,\text{ord}(i_{k})}\leq n$,
and consequently we have $0<n_{k,1}<n_{k,2}<\ldots<n_{k,\text{ord}(i_{k})}\leq n+\text{ord}(i_{k})$.
The interface $i_{k}$ contains, according to Theorem~\ref{thm:1st rule-1},
$\left|i_{k}\right|=n+\text{ord}(i_{k})$ bonds $e_{k,1},\ldots,e_{k,n+\text{ord}(i_{k})}$.
Therefore, for every $s_{k,j}$, the bond $e_{k,n_{k,j}}$ is an element
of $i_{k}$ and is different from all other $e_{k,n_{k,j'}}$ with
$j\neq j'$. 

Let us now set $K_{v}=\left\{ e_{k,n_{k,j}}\,\vert\,s_{k,j}\in\mathcal{S}\right\} $.
We have ensured by construction that every interface $i_{k}$ contains
exactly $\text{ord}(i_{k})$ elements of $K_{v}$: $\left|K_{v}\cap i_{k}\right|=\text{ord}(i_{k})$,
meaning that \emph{the set $K_{v}$ satisfies condition $(a)$ of
the Third Rule of interface theory given in Theorem~\ref{thm:Construct_CC-1}.}

Next, we need to show that \emph{the set $K_{v}$ satisfies conditions
$(a)-(c)$ of the Second Rule of interface theory given in Theorem~\ref{thm:2nd rule-1}
(and consequently condition $(b)$ of the Third Rule):} Consider an
arbitrary fragment $f$ of $\boldsymbol{B}$, denote by $i_{l}$ the
interface of $f$ which contains the first interface bond of $f$,
and denote by $i_{r}$ the other interface of $f$. According to Def.~\ref{def:PosetS},
the \soc s of $f$ satisfy, for applicable values of $j=1,2,\ldots$,
the cover relations $s_{l,j}\lessdot_{\mathcal{S}}s_{r,j}$ and $s_{r,j}\lessdot_{\mathcal{S}}s_{l,j+1}$.
Since $\tilde{\mu}$ is order-preserving, it follows that $\tilde{m}_{l,j}\leq\tilde{m}_{r,j}$
and $\tilde{m}_{r,j}\leq\tilde{m}_{l,j+1}$, and therefore $n_{l,j}\leq n_{r,j}$
and $n_{r,j}<n_{l,j+1}$. Note now that the interface bond $e_{l,p}$
is located to the left of the interface bond $e_{r,p}$, which in
turn is located to the left of $e_{l,p+1}$; therefore, $n_{l,j}\leq n_{r,j}$
ensures that $e_{l,n_{l,j}}$ is located to the left of $e_{r,n_{r,j}}$,
and $n_{r,j}<n_{l,j+1}$ ensures that $e_{r,n_{r,j}}$ is located
to the left of $e_{l,n_{l,j+1}}$. Thus, the sequence of double interface
bonds of the fragment $f$, from left to right, is given by
\[
e_{l,n_{l,1}},e_{r,n_{r,1}},e_{l,n_{l,2}},e_{r,n_{r,2}},\ldots,\begin{cases}
e_{r,n_{r,\text{ord}(i_{r})}} & \text{if }\text{ord}(i_{l})+\text{ord}(i_{r})\text{ is even},\\
e_{l,n_{l,\text{ord}(i_{l})}} & \text{if }\text{ord}(i_{l})+\text{ord}(i_{r})\text{ is odd}.
\end{cases}
\]
Comparison to the sequence of interface bonds of $f$
\[
e_{l,1},e_{r,1},e_{l,2},e_{r,2},\ldots,\begin{cases}
e_{r,\text{ord}(i_{r})} & \text{if }\text{ord}(i_{l})+\text{ord}(i_{r})\text{ is even,}\\
e_{l,\text{ord}(i_{l})} & \text{if }\text{ord}(i_{l})+\text{ord}(i_{r})\text{ is odd,}
\end{cases}
\]
shows that all the conditions of the Second Rule, Theorem~\ref{thm:2nd rule-1}
are satisfied. 

With this, we have seen that the two conditions $(a)$ and $(b)$
of the Third Rule of interface theory given in Theorem~\ref{thm:Construct_CC-1}
are satisfied; consequently, Theorem~\ref{thm:Construct_CC-1} states
there exists exactly one Kekulé structure $K$ with $K_{I}=K_{v}$.

It remains to show that this Kekulé structure $K$ indeed satisfies
conditions 1\textendash 3 of Lemma~\emph{\ref{thm:Construct_CC}}.
It is clear from the construction of $K_{v}$ and the fact that $K_{I}=K_{v}$
that $\tilde{\mu}=\mu_{K}$ (because $\mu_{K}(s_{k,j})=\text{pos}_{K}\left(s_{k,j}\right)-j=n_{k,j}-j=\tilde{m}_{k,j}=\tilde{\mu}(s_{k,j})$
for all $s_{k,j}\in\mathcal{S}$ and the domains of both maps are
identical). First, we show condition 3 by demonstrating that $\mathcal{A}=\mathcal{A}_{K}$:

By construction of $\mu$ and $\tilde{\mu}$, for all $s_{k,j}\in\mathcal{A}$,
we have $\tilde{\mu}(s_{k,j})=\mu(s_{k,j})\in\left[\thinspace n\thinspace\right]$
and
\begin{equation}
\tilde{\mu}(s_{k,j})>\text{max}\left(\left\{ \tilde{\mu}(s_{k',j'})\,\vert\,s_{k',j'}\in\mathcal{S},s_{k',j'}<_{\mathcal{S}}s_{k,j}\right\} \cup\left\{ 0\right\} \right)\label{eq:mutildeL}
\end{equation}

\noindent for the following reason: For every $s_{k',j'}\in\mathcal{A}$,
we have $\tilde{\mu}(s_{k',j'})=\mu(s_{k',j'})<\mu(s_{k,j})=\tilde{\mu}(s_{k,j})$
since $\mu$ is strictly order-preserving. For every $s_{k',j'}\in\mathcal{S}\setminus\mathcal{A}$,
by Eq.~(\ref{eq:mutilde}) there are two possibilities: $\tilde{\mu}(s_{k',j'})=0<\mu(s_{k,j})=\tilde{\mu}(s_{k,j})$,
or there exists an element $s_{k'',j''}\in\mathcal{A}$ with $s_{k'',j''}<_{\mathcal{S}}s_{k',j'}$
for which $\tilde{\mu}(s_{k',j'})=\mu(s_{k'',j''})$. Since $s_{k'',j''}\in\mathcal{A}$
and since $\mu$ is strictly order-preserving, we have $\tilde{\mu}(s_{k',j'})=\mu(s_{k'',j''})<\mu(s_{k,j})=\tilde{\mu}(s_{k,j})$.
It follows from Eq.~(\ref{eq:mutildeL}) and $\tilde{\mu}=\mu_{K}$
that statement $(i')$ of Lemma~\ref{lem:nonproperEq} is not satisfied
for $s_{k,j}$, which shows that $s_{k,j}\in\mathcal{A}_{K}$, demonstrating
that $\mathcal{A}\subset\mathcal{A}_{K}$.

On the other hand, for every $s_{k,j}\in\mathcal{S}\setminus\mathcal{A}$,
we have by construction in Eq.~(\ref{eq:mutildeL})
\begin{equation}
\tilde{\mu}(s_{k,j})=\text{max}\left(\left\{ \mu(s_{k',j'})\,\vert\,s_{k',j'}\in\mathcal{A},s_{k',j'}<_{\mathcal{S}}s_{k,j}\right\} \cup\left\{ 0\right\} \right).\label{eq:mutildeE}
\end{equation}
Since $\tilde{\mu}$ is order-preserving, any $s_{k',j'}\in\mathcal{S}$
with $s_{k',j'}<_{\mathcal{S}}s_{k,j}$ must satisfy $\tilde{\mu}(s_{k',j'})\leq\tilde{\mu}(s_{k,j})$,
and therefore it is clear that 
\begin{equation}
\tilde{\mu}(s_{k,j})=\text{max}\left(\left\{ \tilde{\mu}(s_{k',j'})\,\vert\,s_{k',j'}\in\mathcal{S},s_{k',j'}<_{\mathcal{S}}s_{k,j}\right\} \cup\left\{ 0\right\} \right).\label{eq:mutildeES}
\end{equation}
It follows from Eq.~(\ref{eq:mutildeES}) and $\tilde{\mu}=\mu_{K}$
that statement $(i')$ of Lemma~\ref{lem:nonproperEq} is satisfied
for $s_{k,j}$, which shows that $s_{k,j}\in\mathcal{S}\setminus\mathcal{A}_{K}$,
demonstrating that $\mathcal{S}\setminus\mathcal{A}\subset\mathcal{S}\setminus\mathcal{A}_{K}$
and consequently $\mathcal{A}_{K}\subset\mathcal{A}$.

This shows that $\mathcal{A}=\mathcal{A}_{K}$, demonstrating correctness
of statement 3 of Lemma~\ref{thm:Construct_CC}.

It is now obvious that
\begin{eqnarray}
\mu_{K}\left(s_{k,j}\right)=\tilde{\mu}(s_{k,j}) & = & \begin{cases}
\mu(s_{k,j}) & \text{if }s_{k,j}\in\mathcal{A}\\
\text{max}\left(\left\{ \mu(s_{k',j'})\,\vert\,s_{k',j'}\in\mathcal{A},s_{k',j'}<_{\mathcal{S}}s_{k,j}\right\} \cup\left\{ 0\right\} \right) & \text{if }s_{k,j}\in\mathcal{\mathcal{S}\setminus\mathcal{A}}
\end{cases}\label{eq:cond12proof}\\
 & = & \begin{cases}
\mu(s_{k,j}) & \text{if }s_{k,j}\in\mathcal{A}_{K}\\
\text{max}\left(\left\{ \mu(s_{k',j'})\,\vert\,s_{k',j'}\in\mathcal{A},s_{k',j'}<_{\mathcal{S}}s_{k,j}\right\} \cup\left\{ 0\right\} \right) & \text{if }s_{k,j}\in\mathcal{\mathcal{S}\setminus\mathcal{A}}_{K}
\end{cases}
\end{eqnarray}
which demonstrates correctness of statements 1 and 2 of Lemma~\ref{thm:Construct_CC}.

Every Kekulé structure $K'$ satisfying conditions $1$\textendash $3$
has to satisfy $\mu_{K'}\vert_{\mathcal{A}}=\mu=\mu_{K}\vert_{\mathcal{A}}$,
and thus according to Lemma~\ref{lem:Uniqueness} is identical to
$K$, ensuring that there is only one such Kekulé structure, concluding
the proof.
\end{proof}
\noindent \textbf{Acknowledgement.} This work was financially supported
by Ministry of Science and Technology of Taiwan (MOST108-2113-M-009-010-MY3)
and the Center for Emergent Functional Matter Science of National
Chiao Tung University from the Featured Areas Research Center Program
within the framework of the Higher Education Sprout Project by the
Ministry of Education (MOE), Taiwan.

\noindent \baselineskip=0.19in

\end{document}